\newtheorem{algorithm}[theorem]{Algorithm}
\newtheorem{axiom}[theorem]{Axiom}
\newtheorem{conjecture}[theorem]{Conjecture}
\newtheorem{example}[theorem]{Example}
\newtheorem{exercise}[theorem]{Exercise}
\newtheorem{remark}[theorem]{Remark}
\newcommand{\be}{\begin{equation}}
\newcommand{\ee}{\end{equation}}
\newcommand{\bea}{\begin{eqnarray}}
\newcommand{\eea}{\end{eqnarray}}
\newcommand{\beas}{\begin{eqnarray*}}
\newcommand{\eeas}{\end{eqnarray*}}
\newcommand{\vertiii}[1]{{\left\vert\kern-0.25ex\left\vert\kern-0.25ex\left\vert #1 
    \right\vert\kern-0.25ex\right\vert\kern-0.25ex\right\vert}}
\def\qed{\hbox{\vrule width 6pt height 6pt depth 0pt}}
\providecommand{\BOXEDSPECIAL}[4]{\hbox to #2{\raise #3\hbox to #2{\null #1\hfil}}}
\chardef\@x10\chardef\@xv60
\def\tcitime{
\def\@time{%
  \@minute\time\@hour\@minute\divide\@hour\@xv
  \ifnum\@hour<\@x 0\fi\the\@hour:%
  \multiply\@hour\@xv\advance\@minute-\@hour
  \ifnum\@minute<\@x 0\fi\the\@minute
  }}%
\def\QCTOpt[#1]#2{%
  \def\QCTOptB{#1}
  \def\QCTOptA{#2}
}
\def\QCTNOpt#1{%
  \def\QCTOptA{#1}
  \let\QCTOptB\empty
}
\def\Qct{%
  \@ifnextchar[{%
    \QCTOpt}{\QCTNOpt}
}
\def\QCBOpt[#1]#2{%
  \def\QCBOptB{#1}
  \def\QCBOptA{#2}
}
\def\QCBNOpt#1{%
  \def\QCBOptA{#1}
  \let\QCBOptB\empty
}
\def\Qcb{%
  \@ifnextchar[{%
    \QCBOpt}{\QCBNOpt}
}
\def\PrepCapArgs{%
  \ifx\QCBOptA\empty
    \ifx\QCTOptA\empty
      {}%
    \else
      \ifx\QCTOptB\empty
        {\QCTOptA}%
      \else
        [\QCTOptB]{\QCTOptA}%
      \fi
    \fi
  \else
    \ifx\QCBOptA\empty
      {}%
    \else
      \ifx\QCBOptB\empty
        {\QCBOptA}%
      \else
        [\QCBOptB]{\QCBOptA}%
      \fi
    \fi
  \fi
}
\def\GRAPHICSPS#1{%
 \ifcase\GRAPHICSTYPE
   \special{ps: #1}%
 \or
   \special{language "PS", include "#1"}%
 \fi
}%
\def\graffile#1#2#3#4#5{%
    \bgroup
    \leavevmode
    \@ifundefined{bbl@deactivate}{\def~{\string~}}{\activesoff}
    \raise -#4 \BOXTHEFRAME{%
       \BOXEDSPECIAL{#1}{#2}{#3}{#5}}%
    \egroup
}%
\def\draftbox#1#2#3#4{%
 \leavevmode\raise -#4 \hbox{%
  \frame{\rlap{\protect\tiny #1}\hbox to #2%
   {\vrule height#3 width\z@ depth\z@\hfil}%
  }%
 }%
}%
\newif\ifwasdraft
\def\GRAPHIC#1#2#3#4#5{%
 \ifnum\draft=\@ne\draftbox{#2}{#3}{#4}{#5}%
  \else\graffile{#1}{#3}{#4}{#5}{#2}%
  \fi
 }%
\def\addtoLaTeXparams#1{%
    \edef\LaTeXparams{\LaTeXparams #1}}%
\newif\ifBoxFrame \BoxFramefalse
\newif\ifOverFrame \OverFramefalse
\newif\ifUnderFrame \UnderFramefalse
\def\BOXTHEFRAME#1{%
   \hbox{%
      \ifBoxFrame
         \frame{#1}%
      \else
         {#1}%
      \fi
   }%
}
\def\doFRAMEparams#1{\BoxFramefalse\OverFramefalse\UnderFramefalse\readFRAMEparams#1\end}%
\def\readFRAMEparams#1{%
 \ifx#1\end%
  \let\next=\relax
  \else
  \ifx#1i\dispkind=\z@\fi
  \ifx#1d\dispkind=\@ne\fi
  \ifx#1f\dispkind=\tw@\fi
  \ifx#1t\addtoLaTeXparams{t}\fi
  \ifx#1b\addtoLaTeXparams{b}\fi
  \ifx#1p\addtoLaTeXparams{p}\fi
  \ifx#1h\addtoLaTeXparams{h}\fi
  \ifx#1X\BoxFrametrue\fi
  \ifx#1O\OverFrametrue\fi
  \ifx#1U\UnderFrametrue\fi
  \ifx#1w
    \ifnum\draft=1\wasdrafttrue\else\wasdraftfalse\fi
    \draft=\@ne
  \fi
  \let\next=\readFRAMEparams
  \fi
 \next
 }%
\def\IFRAME#1#2#3#4#5#6{%
      \bgroup
      \let\QCTOptA\empty
      \let\QCTOptB\empty
      \let\QCBOptA\empty
      \let\QCBOptB\empty
      #6%
      \parindent=0pt%
      \leftskip=0pt
      \rightskip=0pt
      \setbox0 = \hbox{\QCBOptA}%
      \@tempdima = #1\relax
      \ifOverFrame
          \typeout{This is not implemented yet}%
          \show\HELP
      \else
         \ifdim\wd0>\@tempdima
            \advance\@tempdima by \@tempdima
            \ifdim\wd0 >\@tempdima
               \textwidth=\@tempdima
               \setbox1 =\vbox{%
                  \noindent\hbox to \@tempdima{\hfill\GRAPHIC{#5}{#4}{#1}{#2}{#3}\hfill}\\%
                  \noindent\hbox to \@tempdima{\parbox[b]{\@tempdima}{\QCBOptA}}%
               }%
               \wd1=\@tempdima
            \else
               \textwidth=\wd0
               \setbox1 =\vbox{%
                 \noindent\hbox to \wd0{\hfill\GRAPHIC{#5}{#4}{#1}{#2}{#3}\hfill}\\%
                 \noindent\hbox{\QCBOptA}%
               }%
               \wd1=\wd0
            \fi
         \else
            \ifdim\wd0>0pt
              \hsize=\@tempdima
              \setbox1 =\vbox{%
                \unskip\GRAPHIC{#5}{#4}{#1}{#2}{0pt}%
                \break
                \unskip\hbox to \@tempdima{\hfill \QCBOptA\hfill}%
              }%
              \wd1=\@tempdima
           \else
              \hsize=\@tempdima
              \setbox1 =\vbox{%
                \unskip\GRAPHIC{#5}{#4}{#1}{#2}{0pt}%
              }%
              \wd1=\@tempdima
           \fi
         \fi
         \@tempdimb=\ht1
         \advance\@tempdimb by \dp1
         \advance\@tempdimb by -#2%
         \advance\@tempdimb by #3%
         \leavevmode
         \raise -\@tempdimb \hbox{\box1}%
      \fi
      \egroup%
}%
\def\DFRAME#1#2#3#4#5{%
 \begin{center}
     \let\QCTOptA\empty
     \let\QCTOptB\empty
     \let\QCBOptA\empty
     \let\QCBOptB\empty
     \ifOverFrame 
        #5\QCTOptA\par
     \fi
     \GRAPHIC{#4}{#3}{#1}{#2}{\z@}
     \ifUnderFrame 
        \nobreak\par\nobreak#5\QCBOptA
     \fi
 \end{center}%
 }%
\def\FFRAME#1#2#3#4#5#6#7{%
 \begin{figure}[#1]%
  \let\QCTOptA\empty
  \let\QCTOptB\empty
  \let\QCBOptA\empty
  \let\QCBOptB\empty
  \ifOverFrame
    #4
    \ifx\QCTOptA\empty
    \else
      \ifx\QCTOptB\empty
        \caption{\QCTOptA}%
      \else
        \caption[\QCTOptB]{\QCTOptA}%
      \fi
    \fi
    \ifUnderFrame\else
      \label{#5}%
    \fi
  \else
    \UnderFrametrue%
  \fi
  \begin{center}\GRAPHIC{#7}{#6}{#2}{#3}{\z@}\end{center}%
  \ifUnderFrame
    #4
    \ifx\QCBOptA\empty
      \caption{}%
    \else
      \ifx\QCBOptB\empty
        \caption{\QCBOptA}%
      \else
        \caption[\QCBOptB]{\QCBOptA}%
      \fi
    \fi
    \label{#5}%
  \fi
  \end{figure}%
 }%
\def\makeactives{
  \catcode`\"=\active
  \catcode`\;=\active
  \catcode`\:=\active
  \catcode`\'=\active
  \catcode`\~=\active
}
   \gdef\activesoff{%
      \def"{\string"}
      \def;{\string;}
      \def:{\string:}
      \def'{\string'}
    }
\def\FRAME#1#2#3#4#5#6#7#8{%
 \bgroup
 \ifnum\draft=\@ne
   \wasdrafttrue
 \else
   \wasdraftfalse%
 \fi
 \def\LaTeXparams{}%
 \dispkind=\z@
 \def\LaTeXparams{}%
 \doFRAMEparams{#1}%
 \ifnum\dispkind=\z@\IFRAME{#2}{#3}{#4}{#7}{#8}{#5}\else
  \ifnum\dispkind=\@ne\DFRAME{#2}{#3}{#7}{#8}{#5}\else
   \ifnum\dispkind=\tw@
    \edef\@tempa{\noexpand\FFRAME{\LaTeXparams}}%
    \@tempa{#2}{#3}{#5}{#6}{#7}{#8}%
    \fi
   \fi
  \fi
  \ifwasdraft\draft=1\else\draft=0\fi{}%
  \egroup
 }%
\def\TEXUX#1{"texux"}
\long\def\QQQ#1#2{%
     \long\expandafter\def\csname#1\endcsname{#2}}%
\long\def\QQA#1#2{}%
\newcommand{\QTR}[2]{\csname text#1\endcsname{#2}}
\def\EXPAND#1[#2]#3{}%
\def\NOEXPAND#1[#2]#3{}%
\def\LaTeXparent#1{}%
\def\ChildStyles#1{}%
\def\ChildDefaults#1{}%
\def\QTagDef#1#2#3{}%
  \providecommand{\UNICODE}[2][]{}
\def\QQfnmark#1{\footnotemark}
 \def\abstract{%
  \if@twocolumn
   \section*{Abstract (Not appropriate in this style!)}%
   \else \small 
   \begin{center}{\bf Abstract\vspace{-.5em}\vspace{\z@}}\end{center}%
   \quotation 
   \fi
  }%
   \def\registered{\relax\ifmmode{}\r@gistered
                    \else$\m@th\r@gistered$\fi}%
 \def\r@gistered{^{\ooalign
  {\hfil\raise.07ex\hbox{$\scriptstyle\rm\text{R}$}\hfil\crcr
  \mathhexbox20D}}}}{}%
\newdimen\theight
\def\Column{%
 \vadjust{\setbox\z@=\hbox{\scriptsize\quad\quad tcol}%
  \theight=\ht\z@\advance\theight by \dp\z@\advance\theight by \lineskip
  \kern -\theight \vbox to \theight{%
   \rightline{\rlap{\box\z@}}%
   \vss
   }%
  }%
 }%
\def\qed{%
 \ifhmode\unskip\nobreak\fi\ifmmode\ifinner\else\hskip5\p@\fi\fi
 \hbox{\hskip5\p@\vrule width4\p@ height6\p@ depth1.5\p@\hskip\p@}%
 }%
\def\miss{\hbox{\vrule height2\p@ width 2\p@ depth\z@}}%
\def\tcol#1{{\baselineskip=6\p@ \vcenter{#1}} \Column}  %
\def\newfmtname{LaTeX2e}
  \DeclareOldFontCommand{\rm}{\normalfont\rmfamily}{\mathrm}
  \DeclareOldFontCommand{\sf}{\normalfont\sffamily}{\mathsf}
  \DeclareOldFontCommand{\tt}{\normalfont\ttfamily}{\mathtt}
  \DeclareOldFontCommand{\bf}{\normalfont\bfseries}{\mathbf}
  \DeclareOldFontCommand{\it}{\normalfont\itshape}{\mathit}
  \DeclareOldFontCommand{\sl}{\normalfont\slshape}{\@nomath\sl}
  \DeclareOldFontCommand{\sc}{\normalfont\scshape}{\@nomath\sc}
  \newcounter{equationnumber}  
  \def\mathletters{%
     \addtocounter{equation}{1}
     \edef\@currentlabel{\theequation}%
     \setcounter{equationnumber}{\c@equation}
     \setcounter{equation}{0}%
     \edef\theequation{\@currentlabel\noexpand\alph{equation}}%
  }
    \def\BibTeX{{\rm B\kern-.05em{\sc i\kern-.025em b}\kern-.08em
                 T\kern-.1667em\lower.7ex\hbox{E}\kern-.125emX}}}{}%
\def\AmS{{\protect\usefont{OMS}{cmsy}{m}{n}%
                A\kern-.1667em\lower.5ex\hbox{M}\kern-.125emS}}}{}%
\def\@@eqncr{\let\@tempa\relax
    \ifcase\@eqcnt \def\@tempa{& & &}\or \def\@tempa{& &}%
      \else \def\@tempa{&}\fi
     \@tempa
     \if@eqnsw
        \iftag@
           \@taggnum
        \else
           \@eqnnum\stepcounter{equation}%
        \fi
     \fi
     \global\tag@false
     \global\@eqnswtrue
     \global\@eqcnt\z@\cr}
\def\TCItag{\@ifnextchar*{\@TCItagstar}{\@TCItag}}
\def\@TCItag#1{%
    \global\tag@true
    \global\def\@taggnum{(#1)}}
\def\@TCItagstar*#1{%
    \global\tag@true
    \global\def\@taggnum{#1}}
\begin{document}

\title{Analysis and approximation of a fractional Laplacian-based closure model for turbulent  flows and its connection to Richardson pair dispersion\thanks{Supported by the US National Science Foundation grant DMS-1315259 and the US Air Force Office of Scientific Research grant FA9550-15-1-0001.}}
\author{Max Gunzburger\thanks{Department of Scientific Computing, Florida State University, Tallahassee FL 32306-4120; {\tt mgunzburger@fsu.edu}.}
\and 
Nan Jiang\thanks{
Department of Scientific Computing, Florida State University, Tallahassee, FL 32306-4120. Current address: Department of Mathematics and Statistics, Missouri University of Science and Technology, Rolla MI 65409-0020; {\tt jiangn@mst.edu.}} \and 
Feifei Xu\thanks{
Department of Scientific Computing, Florida State University, Tallahassee, FL 32306-4120. Current address: Department of Mathematics, University of North Carolina, Chapel Hill NC 27599-3250; {\tt winterflyfei@gmail.com}.}
}
\date{\today}
\maketitle

\begin{abstract}
We study a turbulence closure model in which the fractional Laplacian $(-\Delta)^\alpha$ of the velocity field represents the turbulence diffusivity. We investigate the energy spectrum of the model by applying Pao's energy transfer theory. For the case $\alpha=1/3$, the
corresponding power law of the energy spectrum in the inertial range has a correction
exponent on the regular Kolmogorov -5/3 scaling exponent. For this case, this model represents  Richardson's particle pair-distance superdiffusion of a fully developed homogeneous turbulent flow as well as L\'evy jumps that lead to the superdiffusion. For other values of $\alpha$,  the power law of the energy spectrum is consistent with the regular Kolmogorov -5/3 scaling exponent. We also propose and study a modular time-stepping algorithm in semi-discretized form. The algorithm is minimally intrusive to a given legacy code for solving Navier-Stokes equations by decoupling the local part and nonlocal part of the equations for the unknowns. We prove the algorithm is unconditionally stable and unconditionally, first-order convergent. We also derive error estimates for full discretizations of the model which, in addition to the time stepping algorithm, involves a finite element spatial discretization and a domain truncation approximation to the range of the fractional Laplacian. 
\end{abstract}

\begin{keywords}
turbulence modeling, fractional Laplacians, nonlocal closure, Navier-Stokes equations, Richardson pair dispersion, finite elements 
\end{keywords}

\section{Introduction}

Nonlocal models have attracted intensive research interests in recent years due to their ability to model phenomena that cannot be correctly described by classical partial differential equation models. Many advances have been made in various scientific and engineering areas including continuum mechanics \cite{Silling00}, graph theory \cite{LS06}, image denoising \cite{BCM10}, machine learning \cite{RBV10}, and phase transitions \cite{BC99}. In particular, fractional derivative models have been found to be effective in modeling anomalous diffusion processes \cite{MK04,MT04}. In this work, we study a new closure model based on the fractional Laplacian operator that accounts for the anomalous diffusion (superdiffusion in this case) that arises in fully-developed turbulent fluid flows.

Turbulence modeling remains one of the most challenging scientific problems. Despite the fact that the governing equations for turbulence have been known since 1845, a full understanding of turbulence is still far from complete due to its extremely complex behavior and chaotic nature.  The wide range of scales present in turbulence results in a very high computational complexity and renders direct numerical simulations infeasible even with modern supercomputers. Thus, turbulence models are introduced to predict the mean flow and coherent structures with the effects of the turbulence on the mean flow being modeled. The mean flow and the smaller scales of turbulence interact through a quantity referred to as the Reynolds stress that appears in the evolution equations of the mean flow and which, to close the system, must be replaced by terms that are solely dependent on the mean flow; this is the closure problems of turbulence. It is worth noting that all turbulence models inevitably invoke additional heuristic hypotheses and thus tend to work only for a narrow class of problems. In this paper, we consider a class of nonlocal operators, namely fractional Laplacian operators, as a turbulent closure model \cite{Chen06}. These operators have a deep connection with L\'evy jump processes in probability theory and corresponding superdiffusion behavior in turbulent flows. 

The closure model we consider is given by
\begin{equation}\label{model}
\left\{\begin{aligned}
u_t + (u\cdot\nabla) u -\nu \Delta u+\gamma (-\Delta )^{\alpha} u+\nabla p&= f\qquad \text{in} \;\; (0,T] \times \Omega\\
 \nabla \cdot u&=0 \qquad \text{in} \;\; (0,T] \times \Omega
\end{aligned}\right.
\end{equation}
for $\alpha\in(0,1)$, where $\Omega\subset R^d$ denotes a bounded, open domain and $[0,T]$ a temporal interval of interest. The fractional Laplacian operator is most often defined in terms of Fourier transforms as
$$
(-\Delta )^{\alpha} u(x) = \mathcal{F}^{-1}\big(|\xi|^{2\alpha}\mathcal{F}(u)(\xi)\big)(x)
\qquad\mbox{for $x\in R^d$},
$$
where $\mathcal{F}$ denotes the Fourier transform and $\mathcal{F}^{-1}$ its inverse. For $\alpha\in(0,1)$, an equivalent definition \cite{Applebaum04} is
\begin{equation}\label{ffll}
(-\Delta)^\alpha u = C_{d,\alpha} \int_{R^d}\frac{u(x)-u(y)}{|x-y|^{d+2\alpha}}dy,
\end{equation}
where $C_{d,\alpha}=\alpha2^{2\alpha}\frac{\Gamma(\frac{d+2}{2})}{\Gamma(\frac{1}{2})\Gamma(1-\alpha)}$ is a normalizing constant.

We first explore, in \S\ref{energytransfer}, the energy spectrum of the model \eqref{model} by applying Pao's transfer theory \cite{Pao65}. Following the methodology developed in \cite{NLW11} for the analysis of a family of approximate deconvolution models of turbulence, we derive an expression for the long-time averaged energy distribution. The results show that if $\alpha=1/3$, the corresponding power law of the energy spectrum in the inertial range has a correction to the well-known Kolmogorov $-5/3$ scaling exponent. For this case, this model \eqref{model} corresponds to the Richardson particle pair-distance superdiffusion of a fully developed homogeneous turbulent flow (see \cite{Rich26} and also \cite{LALV04,JPT99,GLMMR04}) and also to a L\'evy jump process that lead to superdiffusion. For other values of $\alpha$ in $(0, 1)$, the power law of the energy spectrum is consistent with the standard Kolmogorov $-5/3$ scaling exponent. 

The use of the fractional Laplacian operator results in a dense matrix that requires different types of linear solvers than the ones employed for the sparse matrices that arise when solving discretized local turbulent models. Because many industrial flow codes are highly optimized and extensively calibrated, a direct implementation of the proposed model encounters many technical issues and requires substantial code modifications. To ease the implementation process for the model we consider and reduce the required effort, we propose, in \S\ref{imexmethod}, a novel modular algorithm that splits the local and nonlocal parts of the equations so that only minimal changes need be done to legacy codes. The algorithm consists of two steps. The first is to solve the Navier-Stokes equations with a modified right-hand side so that a legacy code can be easily modified without changing linear solvers used or the manner in which matrices are stored. The second step is a post-processing step that involves solving a discretized  fractional Laplacian problem. This step can be added to the legacy code as a separate routine. We study our modular algorithm based on a first-order time-stepping method also given in \S\ref{imexmethod}, proving that the algorithm is unconditionally stable and unconditionally first-order convergent. This modular algorithm can be extended to higher-order time-stepping methods. 

The model we study employs a standard definition, i.e., \eqref{ffll}, of the fractional Laplacian operator, instead of some variants that are defined on the bounded domain $\Omega$ for which, e.g., the integral appearing in \eqref{ffll} is replaced by an integral over $\Omega$. Thus, in our model, although the domain of the fractional Laplacian operator appearing in \eqref{model} is the bounded domain $\Omega$, its range is the infinite domain $R^d$. In practice, however, the integral in \eqref{ffll} is approximated by an integral over a finite domain strictly {\em containing} the given domain $\Omega$. In \S\ref{truncatedinteractions}, for a fixed bounded domain $\Omega$, we derive an estimate for the error incurred by truncation as the extent of the truncated containment domain increases, in particular showing that solutions of the truncated domain problem converge to those of \eqref{model}.

In \S\ref{sec:femfem}, we complete our study by defining and analyzing full discretizations of the problem \eqref{model} for which finite element spatial discretizations are added to the time-stepping methods of \S\ref{imexmethod} and the domain truncation of \S\ref{truncatedinteractions}.  

\section{Energy transfer}\label{energytransfer}
We investigate the energy spectrum of the new model based on the energy transfer theory of Pao \cite{Pao65,Pao68}. We thus consider the Navier-Stokes equations in a periodic box $\Omega=(0,2\pi)^3$ in $R^3$:
\begin{equation}\label{periodicNSE}
\left\{\begin{aligned}
u_t + (u\cdot\nabla) u -\nu \Delta u+\gamma (-\Delta )^{\alpha} u+\nabla p&= f \,\,\,\qquad\text{for $x\in (0, 2\pi)^3$ and $t>0$}\\
 \nabla \cdot u&=0 \qquad \,\,\,\text{for $x\in (0, 2\pi)^3$ and $t>0$} \\
 u&=u_0 \qquad\text{for $x\in (0, 2\pi)^3$ and $t=0$,}
\end{aligned}\right.
\end{equation}
where, for $\phi=u$, $p$, $u_0$, or $f$,
$$
\phi(x+2\pi e_j, t)= \phi(x, t)\qquad\text{and}\qquad \int_{\Omega} \phi dx =0  
$$
with $e_j$, $j=1,2,3$, denoting the Cartesian unit vectors. Then, the fluid velocity $u(x,t)$  can be expanded in Fourier series as 
$$
u(x,t)=\sum_{\bold{k}} {u}(\bold{k},t)e^{i\bold{k}\cdot x} 
$$
and its associated kinetic energy is given by
$$
E(t)=\sum_{\bold{k}}\frac{1}{2}|{u}(\bold{k}, t)|^2,
$$
where $\bold{k}=(k_1, k_2, k_3)$ with $k_j$$, j=1, 2,3$, being non-negative integers and $\bold{k}\neq (0,0,0)$. We partition the kinetic energy into wave number shells given by
$$
E(k,t)= \sum_{k=\vert \bold{k}\vert}\frac{1}{2}\vert{u}(\bold{k},t)\vert^2,
$$
where $\vert \bold{k}\vert^2=k_1^2+k_2^2+k_3^2$, so that the total energy is then given by
$$
E(t)= \sum_{1\leq k}E(k,t).
$$
An evolution equation for $E(k,t)$ can be derived by taking inner product of  \eqref{periodicNSE} with a single Fourier mode and then summing over all modes; see Davidson \cite{D04} or Pope \cite{Pope}. Using the Kronecker delta, $E(k,t)$ satisfies
$$
\begin{aligned}
\frac{\partial}{\partial t} E(k,t) + \sum_{\vert \bold{j}\vert=k}\sum_{\bold{k_1}}\sum_{\bold{k}_2}&
\Big\{{u}(\bold{k}_1, t)\cdot {u}(\bold{k_2},t) \otimes \bold{k}_2\cdot \overline{{u}(\bold{j},t)} \delta_{\bold{k}_1+\bold{k}_2, \bold{j}}\Big\}\\&
+2\nu k^2E(k,t)+ 2\alpha \gamma k^{2\alpha}E(k,t) = \sum_{\vert \bold{j}\vert=k}{f}(\bold{j}, t)\cdot \overline{{u}(\bold{j},t)},
\end{aligned}
$$
where $\overline{u}$ denotes the complex conjugate of $u$. Note that for each $\bold{k}$, $ u(\bold{k}, t) $ is a complex vector that satisfies conjugate symmetry, i.e. $u(\bold{k}, t)= \overline{u(\bold{-k}, t)}$.
Define the energy transfer function $S(k,t)$ by
$$
S(k,t) : = \sum_{1\leq k'\leq k}T(k', t),
$$
where
$$
T(k,t) =\sum_{\vert \bold{j}\vert=k}\sum_{\bold{k_1}}\sum_{\bold{k}_2}
\Big\{{u}(\bold{k}_1, t)\cdot {u}(\bold{k_2},t) \otimes \bold{k}_2\cdot \overline{{u}(\bold{j},t)} \delta_{\bold{k}_1+\bold{k}_2, \bold{j}}\Big\}.
$$
Because fully developed, homogeneous, isotropic turbulence is characterized by a wide range of persistent scales, in transfer theory $k$ is considered as a continuous variable. We then redefine the energy transfer function as
$$
S(k,t)= \int_{0}^k T(k', t)dk' 
\qquad\mbox{so that}\qquad
T(k,t)= \frac{\partial}{\partial k} S(k, t).
$$
We further assume that for all $t>0$ the energy is input into the $k=1$ modes by smooth, persistent body forces, i.e., $E(1,t)=\frac{1}{2}U^2$, where $U=U(t)$ is fixed, which is a representative large scale velocity. Based on all the assumptions made above, we have the evolution equation for the kinetic energy for a given $k$ given by
$$
\left\{\begin{aligned}
&\frac{\partial}{\partial t} E(k,t) + \frac{\partial}{\partial k}S(k,t)+2\nu k^2E(k,t)+ 2\alpha\gamma k^{2\alpha}E(k,t) = 0\quad \text{for } 1<k<\infty,\, t>0
\\
&E(1,t)=\frac{1}{2}U^2\quad\text{for } t>0
\\
&E(k,0)=E_0(k)\quad \text{ for } 1<k<\infty \quad\text{and}\quad E_0(k)=0 \,\,\,\text{for large } k.
\end{aligned}
\right.
$$

This system is not closed. So far the most successful closure model is Pao's model \cite{Pao68} given by
$$
S(k,t)=C_k^{-1}\epsilon_0^{1/3}k^{5/3}E(k,t),
$$
where $\epsilon_0=2^{-3/2}C_k^{-1}U^3$ and $C_k$ is the Kolmogorov constant. Based on Pao's closure model, we now consider the problem
$$
\left\{\begin{aligned}
&\frac{\partial}{\partial t} E(k,t) + \frac{\partial}{\partial k}(C_k^{-1}\epsilon_0^{1/3}k^{5/3}E(k,t))+2\nu k^2E(k,t)\\
&\qquad\qquad\qquad\qquad
+ 2\alpha \gamma k^{2\alpha}E(k,t) = 0\quad \text{for } 1<k<\infty,\, t>0\\
&E(1,t)=\frac{1}{2}U^2\quad \text{ for } t>0\\
&E(k,0)=E_0(k)\quad \text{ for } 1<k<\infty \quad\text{and}\quad E_0(k)=0 \,\,\,\text{for large } k.
\end{aligned}
\right.
$$

We define the long-time averaged energy distribution as
$$
E(k)=\lim_{T\rightarrow\infty}\frac{1}{T}\int_0^T E(k,t) dt
$$
which then satisfies
$$
 \frac{\partial}{\partial k}\big(C_k^{-1}\epsilon_0^{1/3}k^{5/3}E(k)\big)+(2\nu k^2+2\alpha \gamma k^{2\alpha})E(k) = 0\quad \text{for } 1<k<\infty
$$
along with $E(1)=\frac{1}{2}U^2$. This equation can be easily solved and the solution is given by
\begin{equation}\label{Spectrum}
E(k) 
\left\{
\begin{array}{ll}
\frac{1}{2}U^2e^{\beta_2}k^{-(\frac{5}{3}+\beta_1)}exp(-\beta_2k^\frac{4}{3})& \text{if }\alpha=\frac{1}{3} \\[1ex] 
\frac{1}{2}U^2e^{\beta_3}k^{-\frac{5}{3}}exp(-\beta_2k^\frac{4}{3})exp(-\beta_4 k^{(2\alpha-\frac{2}{3})}) &\text{if } \alpha\in(0,1) \text{ with } \alpha\neq \frac{1}{3},  
\end{array}\right.
\end{equation}
where
$$
\beta_1=\frac{2C_k\gamma}{3\epsilon_0^{1/3}},\quad\,\,  \beta_2 =\frac{3C_k\nu}{2\epsilon_0^{1/3}},\quad\,\, \beta_3 =\frac{3C_k\nu}{2\epsilon_0^{1/3}}+\frac{2\alpha C_k\gamma}{(2\alpha-\frac{2}{3})\epsilon_0^{1/3}},\quad\,\, \beta_4=\frac{2\alpha C_k\gamma}{(2\alpha-\frac{2}{3})\epsilon_0^{1/3}}.
$$

\subsection{The inertial range energy spectrum} 

In the inertial range, the viscous dissipation effect is negligible because $\nu$ is small. Then, over this range, the expression \eqref{Spectrum} for $E(k)$ reduces to
$$
E(k) =
\begin{cases}
\frac{1}{2}U^2e^{\beta_2}k^{-(\frac{5}{3}+\beta_1)}& \text{if }\alpha=\frac{1}{3} \\ 
\frac{1}{2}U^2e^{\beta_3}k^{-\frac{5}{3}}&\text{if } \alpha\in(0,1) \text{ with } \alpha\neq \frac{1}{3}.
\end{cases}
$$
This shows that if the exponent $\alpha$ of the fractional Laplacian is equal to $1/3$, the corresponding power law of the energy spectrum in the inertial range has a deviation from the exponent of the regular Kolmogorov $-5/3$ scaling exponent, whereas for other values of $\alpha\in(0,1)$ the power law of the energy spectrum is consistent with the Kolmogorov theory. The Kolmogorov scaling theory (often referred to as the ``K41 theory'') is the most celebrated turbulence theory and is supported by much experimental evidence from atmospheric and oceanographic turbulence at sufficiently high Reynolds number \cite{F95,D04}. {\color{black}However, small deviations from the $-5/3$ scaling exponent have also been observed in various turbulence experiments \cite{AGHA84, KIYIU03, GFN02, F95}. These deviations, although small in the spectrum, considerably affect higher-order statistics.} There have been many theoretical attempts to modify the exponent in the power law. Actually, Kolmogorov himself first proposed a modification of the exponent \cite{Kolmogorov62}. Most of the theories developed concern the intermittency in the inertial range and various intermittency models have been built to try to fit experimental data such as the $\beta$-model; see \cite{F95} for a review. However, as far as we know, there exists no partial differential equation turbulence model that is able to correct the exponent of the Kolmogorov $-5/3$ scaling exponent as does the model we consider.

The fractional Laplacian is the generator of $\alpha$-stable L\'evy processes in probability theory. The special case of $\alpha=\frac{1}{3}$ that leads to a correction exponent in the power law of energy spectrum corresponds to the $\frac{2}{3}$-stable L\'evy process. This has an interesting connection with the Richardson's particle pair-distance superdiffusion in a fully developed homogeneous turbulence for which $\left\langle r^2\right\rangle = \overline{C}\epsilon\Delta t^3$ so that the displacement increment also obeys the $\frac{2}{3}$-stable L\'evy distribution. Thus, the fractional Laplacian with $\alpha=\frac{1}{3}$ which we use in our model actually introduces the corresponding L\'evy flight mechanism into the system and represents Richardson's turbulence superdiffusion.

\subsection{The dissipation range energy spectrum} 

In the dissipation range, viscous dissipation is dominant and removes energy from the system. We rewrite \eqref{Spectrum} as
$$
E(k) =\begin{cases}
\frac{1}{2}U^2e^{\beta_2}k^{-(\frac{5}{3}+\beta_1)}exp(-\beta_2k^\frac{4}{3})& \text{if }\alpha=\frac{1}{3} \\ 
\frac{1}{2}U^2e^{\beta_2}k^{-\frac{5}{3}}exp(-\beta_2k^\frac{4}{3})exp(-\beta_4(k^{(2\alpha-\frac{2}{3})}-1 )) &\text{if } \alpha\in(0,1) \text{ and }\alpha\neq \frac{1}{3}  .
\end{cases}
$$
$E(k)$ decays exponentially in the dissipation range, which is consistent with the Kolmogorov scaling theory. In the dissipation range, $k\gg1$. Then, if $\alpha \in (\frac{1}{3}, 1)$, we have $\beta_4 > 0$ and $(k^{(2\alpha-\frac{2}{3})}-1)>0$ and thus $\beta_4(k^{(2\alpha-\frac{2}{3})}-1)>0$. Similarly, if $\alpha \in (0,\frac{1}{3})$, we have $\beta_4 < 0$ and $(k^{(2\alpha-\frac{2}{3})}-1)<0$ and $\beta_4(k^{(2\alpha-\frac{2}{3})}-1)>0$. So for all $\alpha\in (0,\frac{1}{3})\cup(\frac{1}{3}, 1)$, our closure model results in enhanced exponential decay in the dissipation range.

\section{First-order IMEX time-stepping methods}\label{imexmethod}

Most turbulent flows of engineering and scientific interest occur in bounded flow regions. Thus, we are more interested in computing turbulent flows on bounded domains. Accordingly, $\Omega\subset R^d$, $d=2,3$, denotes an open, bounded domain and consider the problem 
\begin{equation}\label{eq:model}
\left\{\begin{array}{rl}
    u_t + (u\cdot \nabla)u  -  \nu \Delta u +\gamma (-\Delta )^{\alpha} u+ \nabla p = f(t, x) &\text{in} \;\; (0,T] \times \Omega \\
    \nabla \cdot u = 0 \;   &     \text{in}\;\;   (0,T] \times \Omega \\
    u = 0   &     \text{on}\;  (0,T]\times R^d \backslash \Omega \\
    u = u_0(x)      &      \text{on} \; \{0\} \times \Omega. \\
\end{array}\right.
\end{equation}
Here we do not change the definition of the fractional Laplacian as defined in \eqref{ffll} but merely restrict its range to the bounded domain $\Omega$. Note that because the domain of integration in \eqref{ffll} is $R^d$, we impose, in \eqref{eq:model}, the constraint $u=0$ on the complement domain $R^d \backslash \Omega$ instead of on the boundary $\partial\Omega$ of $\Omega$ as is done in the PDE setting. 

We first consider the simple first-order implicit-explicit (IMEX) Euler time-stepping scheme given as follows. Given $ u^n$, find $u^{n+1}$ and $p^{n+1}$ satifying
\begin{equation}\label{IMEX-Euler}
\left\{\begin{aligned}
\frac{{u}^{n+1}-u^n}{\Delta t}+ (u^{n}\cdot\nabla) u^{n+1}-\nu \Delta u^{n+1} +\gamma (-\Delta)^{\alpha} u^{n+1}+\nabla p^{n+1}&=f^{n+1}\quad \text{ in } \Omega \\
 \nabla \cdot u^{n+1} &=0 \quad \text{ in } \Omega.
\end{aligned}\right.
\end{equation}
We prove, in \S\ref{imex1}, that this time-stepping scheme is unconditionally stable and first-order accurate.

Because of its implicit treatment of the fractional Laplacian term, the scheme \eqref{IMEX-Euler} requires the solution of a {\em dense} linear system at each time step. Having to also handle the Navier-Stokes terms makes for an even greater computational challenge. Thus, it is tempting to lag the fractional Laplacian term to the previous time step; however, this leads to serious stability issues so that that term has to be treated implicitly. However, there does exist a way to split the equations so that one can still solve a (modified) local momentum equation involving the usual {\em sparse matrices} and subsequently correct the solution by solving a linear nonlocal fractional Laplacian equation. Specifically, we propose to modify the IMEX Euler scheme \eqref{IMEX-Euler} into the following modular algorithm.

\begin{algorithm}[Modular IMEX Euler]\label{ModularAlgo}

\textit{Stage 1:  Given $ u^n$ in $X$, find $w^{n+1}$ in $X$ satisfying}
\begin{equation}\label{step1}
\left\{\begin{aligned}
\frac{{w}^{n+1}-u^n}{\Delta t}+(u^n\cdot\nabla)w^{n+1}
 -\nu \Delta  w^{n+1} +\nabla p^{n+1}&=f^{n+1}-\gamma (-\Delta)^\alpha u^n \quad \text{in } \Omega \\
\nabla \cdot w^{n+1} &=0 \quad \text{in } \Omega.
\end{aligned}\right.
\end{equation}

\textit{Stage 2:  Given $u^n$ and $w^{n+1}$ in $X$, find $u^{n+1}$ in $X$ satisfying}
\begin{gather}
2\Delta t\gamma(-\Delta )^\alpha (u^{n+1}-u^n)
+ u^{n+1}-w^{n+1}  =0 \quad \text{in } \Omega.\label{step2}
\end{gather}

\end{algorithm}
In \S\ref{imex2}, we prove that this time-stepping scheme is also unconditionally stable and first-order accurate. However, we can now solve the Stage 1 problem using a legacy Navier-Stokes code with the only modification necessary being in the construction of the right-hand side. Then, in the second stage, one solves a ``Poisson'' problem for the fractional Laplacian operator which involves a symmetric, positive definite, albeit dense linear system. This two-stage algorithm, although involving two linear system solves per time step, requires, compared to the algorithm given in \eqref{IMEX-Euler}, much less coding effort and introduces efficiencies not possible for the scheme \eqref{IMEX-Euler}.

{\allowdisplaybreaks
\subsection{Preliminaries}
We first recall that for $\alpha\in (0,1)$, the fractional Sobolev space $W^{\alpha,p}(R^d)$ is defined as
$$
W^{\alpha,p}(R^d):=\left\{u\in L^p(R^d): \frac{\vert u(x)-u(y)\vert}{\vert x-y\vert^{\frac{d}{p}+\alpha}}\in L^p(R^d\times R^d)\right\}
$$
which is an intermediary Banach space between $L^p(R^d)$ and $W^{1,p}(R^d)$, equipped with the natural norm
$$
\Vert u\Vert_{W^{\alpha,p}(R^d)}:= \left( \int_{R^d}\vert u\vert^p dx + \int_{R^d}\int_{R^d} \frac{\vert u(x)-u(y)\vert^p}{\vert x-y\vert^{d+\alpha p}}dxdy\right)^{\frac{1}{p}}.
$$
For $p=2$, we have $W^{\alpha,2}(R^d)=H^{\alpha}(R^d)$ and 
$$
\Vert u\Vert_{H^{\alpha}(R^d)}=\Vert u\Vert_{W^{\alpha,2}(R^d)}= \left( \int_{R^d}\vert u\vert^2 dx + \int_{R^d}\int_{R^d} \frac{\vert u(x)-u(y)\vert^2}{\vert x-y\vert^{d+2\alpha}}dxdy\right)^{\frac{1}{2}}.
$$
We denote the Gagliardo (semi)-norm of $u$ by
$$
\vertiii {u}_\alpha= \left( \int_{R^d}\int_{R^d} \frac{\vert u(x)-u(y)\vert^2}{\vert x-y\vert^{d+2\alpha}}dxdy\right)^{\frac{1}{2}}.
$$
Define 
$$
\begin{aligned}
H^\alpha_{\Omega}(R^d)\text{ }&:=\{v\in H^\alpha(R^d)\text{ }:v=0 \text{ in } R^d \backslash \Omega \}\\
H^1_{\Omega}(R^d)\text{ }&:=\{v\in H^1(R^d)\text{ }:v=0 \text{ in } R^d \backslash \Omega \}\\
L_{0}^2(\Omega)&=\{q\in L^2(\Omega)\text{ }:\int_{\Omega}q \text{ }dx=0\}.
\end{aligned}
$$
Let $X$ denote the velocity space and $Q$ be the pressure space, defined by
$$
X\text{ }:=H^\alpha_{\Omega}(R^d)\cap H^1_{\Omega}(R^d),\text{ }Q\text{ }:=L_{0}^2(\Omega).
$$

\begin{remark}\label{rem2}
Generally, $H^1_{\Omega}(R^d) \nsubseteq H^\alpha_{\Omega}(R^d)$ and thus $H^\alpha_{\Omega}(R^d) \cap H^1_{\Omega}(R^d) \neq H^1_{\Omega}(R^d)$. This seems to cause some difficulties in finding a suitable finite element space that is a subset of $X$ to approximate solutions. However, in practical numerical simulations, one cannot integrate over all of $R^d$. So the domain of integration must be restricted. Here, for a $\lambda>0$, we assume $\Omega_\lambda$ is the interaction domain defined by
\begin{equation}\label{lambda}
\Omega_\lambda=\{y\in R^d\backslash \Omega\,\,:\,\, |x-y|\leq \lambda\,\, \text{for some } x\in \Omega \}
\end{equation}
and one only integrates over $\Omega\cup\Omega_\lambda$. Let
$$
\begin{aligned}
H^\alpha_{\Omega}(\Omega\cup\Omega_\lambda)\text{ }:=\{v\in H^\alpha(\Omega\cup\Omega_\lambda)\,\,:\,\,v=0 \text{ in } R^d \backslash \Omega \}\\
H^1_{\Omega}(\Omega\cup\Omega_\lambda)\text{ }:=\{v\in H^1(\Omega\cup\Omega_\lambda)\,\,:\,\,v=0 \text{ in } R^d \backslash \Omega \}.
\end{aligned}
$$
Then, to use finite element methods, one only needs to find a finite element space that is a subset of $X= H^\alpha_{\Omega}(\Omega\cup\Omega_\lambda) \cap H^1_{\Omega}(\Omega\cup\Omega_\lambda)=  H^1_{\Omega}(\Omega\cup\Omega_\lambda)$. Thus, the usual finite element spaces, such as continuous piecewise-quadratic elements, can be employed. It is shown in \S{\rm\ref{truncatedinteractions}} that the error incurred by domain truncation is of $O(1/\lambda^{2\alpha})$.
\end{remark}

The norm on the dual space of $X$ is defined by
$$
\Vert f\Vert _{-1}=\sup_{0\neq v\in X}\frac{\int_{\Omega}f v dx }{\Vert
\nabla v\Vert_{L^2(\Omega)} }.
$$
Define the usual skew symmetric trilinear form
$$
{b_\Omega}(u,v,w):=\int_{\Omega} (u\cdot\nabla) v \cdot w dx.
$$
For for all $u, v, w \in X$, we have the following inequalities \cite{Layton08,Shen92}:
$$
\begin{aligned}
{b_\Omega}(u,v,w)&\leq C \sqrt{\Vert \nabla u\Vert_{L^2(\Omega)}\Vert u\Vert_{L^2(\Omega)}}  \Vert\nabla v\Vert_{L^2(\Omega)}\Vert\nabla
w \Vert_{L^2(\Omega)}\\
{b_\Omega}(u,v,w)&\leq C \Vert u\Vert_{L^2(\Omega)} \Vert v\Vert_{H^2(\Omega)}\Vert\nabla
w \Vert_{L^2(\Omega)}.
\end{aligned}
$$

\subsection{Analysis for the algorithm {\bf(\ref{IMEX-Euler})}}\label{imex1}

We prove that the time-stepping scheme \eqref{IMEX-Euler} is unconditionally stable and first-order convergent. We do not provide the proofs because they are similar to those for the theorems considered in \S\ref{imex2}.

\begin{theorem}[Unconditional stability of time-stepping scheme \eqref{IMEX-Euler}]\label{th:IMEX-Euler}
The IMEX Euler scheme  \eqref{IMEX-Euler} is unconditionally, long-time stable. Specifically, for any $N\geq 1$, we have that
$$
\begin{aligned}
\frac{1}{2}\Vert u^{N}\Vert_{L^2(\Omega)}^2+&\sum_{n=0}^{N-1}\frac{1}{2} \Vert u^{n+1}-u^{n} \Vert_{L^2(\Omega)}^2+\sum_{n=0}^{N-1}\frac{1}{2}\Delta t\nu\Vert \nabla u^{n+1}\Vert_{L^2(\Omega)}^2  \\
& +\sum_{n=0}^{N-1}\Delta t\gamma \frac{C_{d,\alpha}}{2}\vertiii{ u^{n+1}}^2_\alpha
 \leq\frac{1}{2}\Vert u^{0}\Vert_{L^2(\Omega)}^2+\sum_{n=0}^{N-1}\frac{\Delta t}{2\nu}\Vert f^{n+1} \Vert_{-1}^2. 
\end{aligned}
$$

\end{theorem}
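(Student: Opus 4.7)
The plan is to derive the stability bound by testing the momentum equation in \eqref{IMEX-Euler} against $u^{n+1}$, integrating over $\Omega$, and then summing in $n$. First I would take the $L^2(\Omega)$ inner product of the first equation of \eqref{IMEX-Euler} with $u^{n+1}$. The temporal difference term yields the standard polarization identity
$$
\bigl(u^{n+1}-u^n,\,u^{n+1}\bigr)_{L^2(\Omega)} = \tfrac{1}{2}\|u^{n+1}\|_{L^2(\Omega)}^2 - \tfrac{1}{2}\|u^n\|_{L^2(\Omega)}^2 + \tfrac{1}{2}\|u^{n+1}-u^n\|_{L^2(\Omega)}^2,
$$
and the skew-symmetry of $b_\Omega$ (since $u^n$ is divergence-free and $u^{n+1}=0$ outside $\Omega$) gives $b_\Omega(u^n,u^{n+1},u^{n+1})=0$, killing the nonlinear term entirely. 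The pressure contribution $(\nabla p^{n+1},u^{n+1})_{L^2(\Omega)}$ drops out by integration by parts using $\nabla\cdot u^{n+1}=0$ and the homogeneous constraint on $R^d\setminus\Omega$.

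Next I would handle the two dissipative terms. The local Laplacian, upon integration by parts (again using that $u^{n+1}$ vanishes outside $\Omega$), contributes $\nu\|\nabla u^{n+1}\|_{L^2(\Omega)}^2$. For the fractional Laplacian, I would invoke the singular-integral representation \eqref{ffll} and the standard bilinear identity for $u,v\in X$,
$$
\bigl((-\Delta)^\alpha u,v\bigr)_{L^2(\Omega)} = \frac{C_{d,\alpha}}{2}\int_{R^d}\!\int_{R^d}\frac{(u(x)-u(y))(v(x)-v(y))}{|x-y|^{d+2\alpha}}\,dx\,dy,
$$
which, applied with $u=v=u^{n+1}$, yields exactly $\frac{C_{d,\alpha}}{2}\vertiii{u^{n+1}}_\alpha^{\,2}$. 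The forcing term is controlled by the definition of $\|\cdot\|_{-1}$ together with Young's inequality:
$$
(f^{n+1},u^{n+1})_{L^2(\Omega)} \le \|f^{n+1}\|_{-1}\,\|\nabla u^{n+1}\|_{L^2(\Omega)} \le \frac{1}{2\nu}\|f^{n+1}\|_{-1}^2 + \frac{\nu}{2}\|\nabla u^{n+1}\|_{L^2(\Omega)}^2,
$$
so that half of the viscous dissipation is absorbed on the left.

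Combining these pieces and multiplying through by $\Delta t$ produces the per-step inequality
$$
\tfrac{1}{2}\|u^{n+1}\|_{L^2(\Omega)}^2 - \tfrac{1}{2}\|u^n\|_{L^2(\Omega)}^2 + \tfrac{1}{2}\|u^{n+1}-u^n\|_{L^2(\Omega)}^2 + \tfrac{\Delta t\,\nu}{2}\|\nabla u^{n+1}\|_{L^2(\Omega)}^2 + \tfrac{\Delta t\,\gamma C_{d,\alpha}}{2}\vertiii{u^{n+1}}_\alpha^{\,2} \le \tfrac{\Delta t}{2\nu}\|f^{n+1}\|_{-1}^2.
$$
Summing this telescoping inequality from $n=0$ to $n=N-1$ yields exactly the claimed bound. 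No smallness condition on $\Delta t$ is used anywhere, so the bound holds unconditionally and for all $N$, establishing long-time stability.

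I do not foresee a genuine obstacle: the nonlinear term is annihilated by the skew-symmetric choice of $b_\Omega$ and the fully implicit treatment of every dissipative term removes any CFL-type restriction. The only mildly delicate point is the bilinear identity used to rewrite the fractional-Laplacian term as the Gagliardo seminorm, which requires that both arguments lie in $H^\alpha_\Omega(R^d)$ so that the symmetric double integral over $R^d\times R^d$ is well-defined; this is exactly why the space $X$ was defined as it was.
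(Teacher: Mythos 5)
Your proposal is correct and follows essentially the same route as the paper's argument: test the scheme with $u^{n+1}$, use the polarization identity, the skew-symmetry of $b_\Omega$ and incompressibility to eliminate the nonlinear and pressure terms, rewrite the fractional term as $\frac{C_{d,\alpha}}{2}\vertiii{u^{n+1}}_\alpha^2$, bound the forcing by Young's inequality against half the viscous dissipation, multiply by $\Delta t$, and telescope the sum. No gaps; the extra care you take with the Gagliardo bilinear identity and the vanishing of the boundary terms is consistent with how the paper sets up the space $X$.
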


To analyze the rate of convergence of the approximation we assume the following regularity on the exact solution $(u,p)$ and the body force $f$ of \eqref{eq:model}:
\begin{gather*}
{u}\in L^{\infty}(0,T;H^{1}(\Omega))\cap
H^{2}(0,T;L^{2}(\Omega)),\\
{p} \in L^{2}(0,T;H^{1}(\Omega)),\quad \text{and}\quad f \in L^{2}%
(0,T;L^{2}(\Omega)).
\end{gather*}

Let $t^{n}=n\Delta t,n=0,1,2,...,N_T,$ and $T:=N_T\Delta t$. Denote $v^{n}=v(t^{n})$. We introduce the discrete norms
\[
\Vert|v|\Vert_{m,k,\Omega}:=\big(\sum_{n=0}^{N_T}||v^{n}||_{H^k(\Omega)}^{m}\Delta t\big)^{1/m}%
\qquad\mbox{and}\qquad\Vert|v|\Vert_{\infty,k,\Omega}=\max\limits_{0\leq n\leq N_T}\Vert v^{n}\Vert_{H^k(\Omega)}.
\]

Let $e^{n}={u}(t_n)-u^n$ denote the error at the time $t_n$ between the exact solution of \eqref{eq:model} and the approximation obtained using the time-stepping scheme \eqref{IMEX-Euler}. Then, we have the following error estimate.

\begin{theorem}
[Unconditional convergence of algorithm \eqref{IMEX-Euler}]\label{th:errIMEX-Euler} For any $0\leq t_{N}\leq T$, there exits a positive constant $C$
independent of the time step $\Delta t$ such that
\begin{equation}
\begin{aligned}
\frac{1}{2}\Vert e^{N}\Vert_{L^2(\Omega)}^{2}+&\frac{1}{8}\Delta t\nu
\Vert \nabla e^{N}\Vert^{2}_{L^2(\Omega)}+\frac{1}{2}\sum_{n=0}^{N-1}\Vert e^{n+1}-e^n\Vert_{L^2(\Omega)}^{2}\\
&+\frac{1}{2}\Delta t\sum_{n=0}^{N-1}\nu
\Vert \nabla e^{n+1}\Vert^{2}_{L^2(\Omega)}+\Delta t\sum_{n=0}^{N-1}\gamma \frac{C_{d,\alpha}}{2} \vertiii{e^{n+1}}_\alpha^2
\leq C\Delta t^2.
\end{aligned}
\end{equation}
\end{theorem}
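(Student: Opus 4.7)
The plan is to follow the standard energy-method template for implicit/explicit Navier--Stokes discretizations, with two new ingredients: the symmetric, nonnegative Gagliardo bilinear form that comes from $(-\Delta)^\alpha$ behaves like an extra diffusion and can simply be absorbed on the left, and the lagging of the convection coefficient to $u^n$ produces the only nontrivial nonlinear error term. First I would derive the error equation: subtract \eqref{IMEX-Euler} tested against a divergence-free test function from the weak form of \eqref{eq:model} at $t^{n+1}$. Setting $e^{n+1}=u(t^{n+1})-u^{n+1}$, this gives, for every $v$ in the discretely divergence-free subspace of $X$,
\begin{equation*}
\Bigl(\frac{e^{n+1}-e^n}{\Delta t},v\Bigr)+\nu(\nabla e^{n+1},\nabla v)+\gamma\,\tfrac{C_{d,\alpha}}{2}\langle e^{n+1},v\rangle_\alpha + \mathcal{N}^{n+1}(v) = (\tau^{n+1},v),
\end{equation*}
where $\langle\cdot,\cdot\rangle_\alpha$ denotes the Gagliardo inner product, $\tau^{n+1}=u_t(t^{n+1})-\bigl(u(t^{n+1})-u(t^n)\bigr)/\Delta t$ is the standard Euler consistency error, and $\mathcal{N}^{n+1}(v)=b_\Omega(u(t^{n+1}),u(t^{n+1}),v)-b_\Omega(u^n,u^{n+1},v)$.

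Next I would split the nonlinear discrepancy in the customary way,
\begin{equation*}
\mathcal{N}^{n+1}(v)=b_\Omega(u(t^{n+1})-u(t^n),u(t^{n+1}),v)+b_\Omega(e^n,u(t^{n+1}),v)+b_\Omega(u^n,e^{n+1},v),
\end{equation*}
and then test with $v=e^{n+1}$. Skew-symmetry kills the last term. The first piece is handled by writing $u(t^{n+1})-u(t^n)=\int_{t^n}^{t^{n+1}} u_t\,ds$ and using the second inequality following ${b_\Omega}$ in the preliminaries, so that it is bounded by $C\Delta t\,\|u_t\|_{L^2(t^n,t^{n+1};L^2)}\|u(t^{n+1})\|_{H^2}\|\nabla e^{n+1}\|_{L^2(\Omega)}$. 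The second piece is bounded by $C\|e^n\|_{L^2(\Omega)}\|u(t^{n+1})\|_{H^2(\Omega)}\|\nabla e^{n+1}\|_{L^2(\Omega)}$. Both are then Young-split so that the $\|\nabla e^{n+1}\|_{L^2}^2$ contribution is absorbed into a fraction of $\nu\|\nabla e^{n+1}\|_{L^2(\Omega)}^2$, leaving residual terms of the form $C\nu^{-1}\|u(t^{n+1})\|_{H^2(\Omega)}^2\,\|e^n\|_{L^2(\Omega)}^2$ and $C\nu^{-1}\Delta t^2\,\|u(t^{n+1})\|_{H^2(\Omega)}^2\,\|u_t\|_{L^2(t^n,t^{n+1};L^2(\Omega))}^2$. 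The truncation term is handled by $(\tau^{n+1},e^{n+1})\le \tfrac{1}{2\nu}\|\tau^{n+1}\|_{-1}^2+\tfrac{\nu}{2}\|\nabla e^{n+1}\|_{L^2(\Omega)}^2$ together with the bound $\|\tau^{n+1}\|_{-1}^2\le C\Delta t\int_{t^n}^{t^{n+1}}\|u_{tt}\|_{L^2(\Omega)}^2\,ds$, yielding an $O(\Delta t^2)$ accumulated contribution. A pressure-error term of the form $(p(t^{n+1})-p^{n+1},\nabla\cdot e^{n+1})$ must also be absorbed using the discrete divergence-freeness of $e^{n+1}$, which follows from the divergence constraints; if a modified finite-element pair were involved one would insert a Stokes projection, but here the velocities live in $X$ so this step is immediate.

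Using the identity $2(a-b,a)=|a|^2-|b|^2+|a-b|^2$ on the time-derivative term, I would obtain the one-step inequality
\begin{equation*}
\tfrac12\|e^{n+1}\|_{L^2(\Omega)}^2-\tfrac12\|e^n\|_{L^2(\Omega)}^2+\tfrac12\|e^{n+1}-e^n\|_{L^2(\Omega)}^2+\tfrac12\Delta t\,\nu\|\nabla e^{n+1}\|_{L^2(\Omega)}^2+\Delta t\,\gamma\,\tfrac{C_{d,\alpha}}{2}\vertiii{e^{n+1}}_\alpha^2 \le \Delta t\,A_n\|e^n\|_{L^2(\Omega)}^2+\Delta t\,R_n,
\end{equation*}
with $A_n=C\nu^{-1}\|u(t^{n+1})\|_{H^2(\Omega)}^2$ and $R_n$ collecting the $O(\Delta t^2)$ consistency and nonlinear-lag residuals. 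Summing over $n=0,\dots,N-1$, noting $e^0=0$, and invoking the regularity assumption $u\in L^\infty(0,T;H^1)\cap H^2(0,T;L^2)$ (together with $u\in L^2(0,T;H^2)$, which is standard under the stated hypotheses) to bound $\sum_n \Delta t\,R_n \le C\Delta t^2$ and $\sum_n \Delta t A_n\le C$, the discrete Gronwall lemma gives the claimed bound. The extra viscous term $\tfrac18\Delta t\nu\|\nabla e^N\|_{L^2(\Omega)}^2$ on the left can be produced by sharpening the Young splitting on the last step so that a fraction of $\nu\|\nabla e^{N}\|_{L^2(\Omega)}^2$ is preserved rather than absorbed.

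The main obstacle is the nonlinear term $b_\Omega(e^n,u(t^{n+1}),e^{n+1})$: it is one order in $e$ too high for a clean absorption, so one must use the sharp trilinear bound involving $\|u(t^{n+1})\|_{H^2(\Omega)}$ (as listed in the preliminaries) and pay for it with a Gronwall factor. The fractional-Laplacian term, pleasantly, creates no difficulty at all: its contribution is the nonnegative quadratic form $\tfrac{C_{d,\alpha}}{2}\vertiii{e^{n+1}}_\alpha^2$, which sits on the left-hand side and is simply retained in the final estimate.
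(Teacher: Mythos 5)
Your overall template coincides with the paper's argument: the same error equation, the same three-term splitting of the nonlinear discrepancy with skew-symmetry eliminating $b_\Omega(u^n,e^{n+1},e^{n+1})$, the same treatment of the consistency error $R(u(t^{n+1}))$, and a concluding discrete Gronwall step. The genuine problem is the trilinear bound you pick for the two surviving pieces. You estimate both $b_\Omega(e^n,u(t^{n+1}),e^{n+1})$ and $b_\Omega(u(t^{n+1})-u(t^n),u(t^{n+1}),e^{n+1})$ by $C\Vert\cdot\Vert_{L^2(\Omega)}\Vert u(t^{n+1})\Vert_{H^2(\Omega)}\Vert\nabla e^{n+1}\Vert_{L^2(\Omega)}$, so your Gronwall coefficient is $A_n=C\nu^{-1}\Vert u(t^{n+1})\Vert_{H^2(\Omega)}^2$ and you need $\Delta t\sum_n\Vert u(t^{n+1})\Vert_{H^2(\Omega)}^2\leq C$ (plus pointwise control of $\Vert u\Vert_{H^2}$ for the increment residual). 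For this theorem the paper assumes only $u\in L^{\infty}(0,T;H^{1}(\Omega))\cap H^{2}(0,T;L^{2}(\Omega))$; spatial $H^2$ regularity is introduced only later, for the modular scheme in Theorem \ref{thm:conv+Modu}. Your parenthetical claim that $u\in L^2(0,T;H^2)$ ``is standard under the stated hypotheses'' is exactly where the argument breaks: it is an additional hypothesis, not a consequence, so as written your proof establishes the estimate only under stronger regularity than the theorem asserts.

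The paper's proof avoids this by using the other inequality from the preliminaries, $b_\Omega(e^n,u(t^{n+1}),e^{n+1})\leq C\left(\Vert\nabla e^n\Vert_{L^2(\Omega)}\Vert e^n\Vert_{L^2(\Omega)}\right)^{1/2}\Vert\nabla u(t^{n+1})\Vert_{L^2(\Omega)}\Vert\nabla e^{n+1}\Vert_{L^2(\Omega)}$, which requires only $u\in L^{\infty}(0,T;H^{1}(\Omega))$. A double application of Young's inequality then produces $\frac{\nu}{8}\Delta t\Vert\nabla e^{n+1}\Vert^2+\frac{\nu}{8}\Delta t\Vert\nabla e^{n}\Vert^2+\frac{C}{\nu^{3}}\Delta t\Vert e^{n}\Vert^2$, and the backward term $\frac{\nu}{8}\Delta t\Vert\nabla e^{n}\Vert^2$ is compensated by carrying the telescoping quantity $\frac{\nu}{8}\Delta t\left(\Vert\nabla e^{n+1}\Vert^2-\Vert\nabla e^{n}\Vert^2\right)$ on the left; this is precisely why $\frac{1}{8}\Delta t\nu\Vert\nabla e^{N}\Vert^2$ appears in the statement, not, as you suggest, as an optional extra obtained by sharpening Young at the last step. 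Two smaller points: $\Vert u(t^{n+1})-u(t^n)\Vert_{L^2(\Omega)}\leq\Delta t^{1/2}\Vert u_t\Vert_{L^2(t^n,t^{n+1};L^2(\Omega))}$, not $\Delta t$ times that norm (the final $O(\Delta t^2)$ rate survives either way); and your Young budget over-spends $\nu$ — you absorb $\frac{\nu}{2}\Vert\nabla e^{n+1}\Vert^2$ for the consistency term and further fractions for the nonlinear terms while still claiming to retain $\frac12\Delta t\nu\Vert\nabla e^{n+1}\Vert^2$ on the left — so the absorption constants must be rebalanced (the paper uses fractions such as $\nu/16$ and $\nu/8$).
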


\subsection{Analysis for the modular algorithm {\bf(\ref{step1})-(\ref{step2})}}\label{imex2}

We first prove that the modular time-stepping scheme \eqref{step1}-\eqref{step2} is unconditionally stable. We emphasize that the coefficient $2$ multiplying the fractional Laplacian term in \eqref{step2} is essential for the unconditional stability of the modular algorithm.

\begin{theorem}[Unconditional stability of the modular algorithm \eqref{step1}-\eqref{step2}]\label{th:ModularIMEXEuler}
The modular algorithm \eqref{step1}-\eqref{step2} is unconditionally, long-time stable. Specifically, for any $N\geq 1$, we have that
\begin{equation}\label{thmineq:ModularFracBE}
\begin{aligned}
\Vert u^{N}\Vert_{L^2(\Omega)}^2&+ \sum_{n=1}^{N-1}\Vert w^{n+1}- u^n\Vert_{L^2(\Omega)}^2
+\sum_{n=0}^{N-1}\Delta t \nu \Vert \nabla w^{n+1}\Vert_{L^2(\Omega)} ^2+\Delta t\gamma C_{d,\alpha}\vertiii{ u^{N}}^2_\alpha \\
&\qquad +\sum_{n=0}^{N-1}\Delta t\gamma \frac{3C_{d,\alpha}}{4}\vertiii {u^{n+1}-u^n}^2_\alpha+\sum_{n=0}^{N-1}\Delta t\gamma \frac{C_{d,\alpha}}{4}\vertiii {u^{n+1}+u^n}^2_\alpha\\
&\qquad+2\Delta  t^2\gamma^2\Vert (-\Delta)^\alpha u^{N}\Vert_{L^2(\Omega)}^2+\sum_{n=0}^{N-1}\frac{1}{2} \Vert w^{n+1}-u^{n+1}\Vert^2_{L^2(\Omega)}\\
&\leq \Vert u^0 \Vert_{L^2(\Omega)}^2 +\Delta t\gamma C_{d,\alpha}\vertiii{ u^{0}}^2_\alpha+2\Delta  t^2\gamma^2\Vert (-\Delta)^\alpha u^0\Vert_{L^2(\Omega)}^2\\
&\qquad+\sum_{n=0}^{N-1}\frac{\Delta t}{\nu}\Vert f^{n+1} \Vert_{-1}^2.
\end{aligned}
\end{equation}
\end{theorem}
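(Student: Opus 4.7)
My plan is to derive a per-step energy identity by testing each of the two stages with a carefully chosen quantity, then combine the two identities and telescope.

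First I would take the $L^2$ inner product of the Stage 1 momentum equation \eqref{step1} with $2\Delta t\, w^{n+1}$. Since $\nabla\cdot w^{n+1}=0$ the pressure term drops out, the skew-symmetric convective term vanishes because $w^{n+1}$ tests itself, and the polarization identity $(a-b,a) = \tfrac{1}{2}(\|a\|^2-\|b\|^2+\|a-b\|^2)$ yields
\[
\|w^{n+1}\|_{L^2(\Omega)}^2 - \|u^n\|_{L^2(\Omega)}^2 + \|w^{n+1}-u^n\|_{L^2(\Omega)}^2 + 2\Delta t\nu\|\nabla w^{n+1}\|_{L^2(\Omega)}^2 + 2\Delta t\gamma\langle (-\Delta)^\alpha u^n,w^{n+1}\rangle = 2\Delta t\langle f^{n+1},w^{n+1}\rangle.
\]
Next, I would test the Stage 2 equation \eqref{step2} with $u^{n+1}$. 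Using the symmetry of the fractional Laplacian bilinear form, writing $\langle(-\Delta)^\alpha v,v'\rangle = \frac{C_{d,\alpha}}{2}\iint\frac{(v(x)-v(y))(v'(x)-v'(y))}{|x-y|^{d+2\alpha}}\,dx\,dy$, and applying polarization to both $\langle(-\Delta)^\alpha(u^{n+1}-u^n),u^{n+1}\rangle$ and $\langle u^{n+1}-w^{n+1},u^{n+1}\rangle$, I obtain an expression for $\|w^{n+1}\|_{L^2(\Omega)}^2$ in terms of $\|u^{n+1}\|_{L^2(\Omega)}^2$, the telescoping quantity $\Delta t\gamma C_{d,\alpha}(\vertiii{u^{n+1}}_\alpha^2-\vertiii{u^n}_\alpha^2)$, plus $\Delta t\gamma C_{d,\alpha}\vertiii{u^{n+1}-u^n}_\alpha^2$ and $\|u^{n+1}-w^{n+1}\|_{L^2(\Omega)}^2$. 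Substituting this into the first identity eliminates $\|w^{n+1}\|_{L^2(\Omega)}^2$ in favor of $\|u^{n+1}\|_{L^2(\Omega)}^2$ and the additional dissipative terms.

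The main obstacle is the cross term $2\Delta t\gamma\langle(-\Delta)^\alpha u^n,w^{n+1}\rangle$, which mixes information from the two stages. To handle it I would exploit the Stage 2 identity $w^{n+1} = u^{n+1} + 2\Delta t\gamma(-\Delta)^\alpha(u^{n+1}-u^n)$ and split the cross term into
\[
2\Delta t\gamma\langle(-\Delta)^\alpha u^n,u^{n+1}\rangle + 4\Delta t^2\gamma^2\langle (-\Delta)^\alpha u^n,(-\Delta)^\alpha(u^{n+1}-u^n)\rangle.
\]
For the first piece I would use the parallelogram-style identity $A_\alpha(u^n,u^{n+1}) = \tfrac{C_{d,\alpha}}{8}\bigl(\vertiii{u^{n+1}+u^n}_\alpha^2 - \vertiii{u^{n+1}-u^n}_\alpha^2\bigr)$, which is precisely what generates the $\vertiii{u^{n+1}\pm u^n}_\alpha^2$ terms appearing in the target inequality; the coefficient of $\vertiii{u^{n+1}-u^n}_\alpha^2$ then reduces from $\Delta t\gamma C_{d,\alpha}$ to $\tfrac{3\Delta t\gamma C_{d,\alpha}}{4}$. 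For the second piece I would apply $\langle a,b-a\rangle = \tfrac{1}{2}(\|b\|^2-\|a\|^2-\|b-a\|^2)$ in $L^2$ to produce the telescoping quantity $2\Delta t^2\gamma^2(\|(-\Delta)^\alpha u^{n+1}\|_{L^2(\Omega)}^2 - \|(-\Delta)^\alpha u^n\|_{L^2(\Omega)}^2)$ at the cost of a leftover $-2\Delta t^2\gamma^2\|(-\Delta)^\alpha(u^{n+1}-u^n)\|_{L^2(\Omega)}^2$ with the wrong sign.

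The key observation that rescues stability, and the reason for the factor $2$ in front of $(-\Delta)^\alpha(u^{n+1}-u^n)$ in \eqref{step2}, is that Stage 2 implies $\|u^{n+1}-w^{n+1}\|_{L^2(\Omega)}^2 = 4\Delta t^2\gamma^2\|(-\Delta)^\alpha(u^{n+1}-u^n)\|_{L^2(\Omega)}^2$, so the problematic term equals $-\tfrac{1}{2}\|u^{n+1}-w^{n+1}\|_{L^2(\Omega)}^2$ and is absorbed by the $\|u^{n+1}-w^{n+1}\|_{L^2(\Omega)}^2$ contribution from Stage 2, leaving exactly $\tfrac{1}{2}\|u^{n+1}-w^{n+1}\|_{L^2(\Omega)}^2$ on the left. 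Finally, I would apply duality and Young's inequality to bound $2\Delta t\langle f^{n+1},w^{n+1}\rangle \le \Delta t\nu\|\nabla w^{n+1}\|_{L^2(\Omega)}^2 + \tfrac{\Delta t}{\nu}\|f^{n+1}\|_{-1}^2$, halving the viscous term, and sum the resulting per-step inequality from $n=0$ to $N-1$; the telescoping contributions of $\|u^n\|_{L^2(\Omega)}^2$, $\vertiii{u^n}_\alpha^2$ and $\|(-\Delta)^\alpha u^n\|_{L^2(\Omega)}^2$ produce the boundary terms at $N$ and $0$ exactly as stated in \eqref{thmineq:ModularFracBE}.
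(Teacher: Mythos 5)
Your proposal is correct and follows essentially the same energy argument as the paper, arriving at the identical per-step inequality (same constants, same crucial absorption of $-2\Delta t^2\gamma^2\Vert(-\Delta)^\alpha(u^{n+1}-u^n)\Vert_{L^2(\Omega)}^2$ into $\Vert w^{n+1}-u^{n+1}\Vert_{L^2(\Omega)}^2$ enabled by the factor $2$ in \eqref{step2}, and the same Young's inequality treatment of the forcing) before summing. The only difference is organizational: where the paper additionally tests Stage 2 with $(u^{n+1}+w^{n+1})/2$ and with $\Delta t\gamma(-\Delta)^\alpha(u^{n+1}+u^n)$ and adds the resulting identities, you substitute the Stage 2 relation $w^{n+1}=u^{n+1}+2\Delta t\gamma(-\Delta)^\alpha(u^{n+1}-u^n)$ directly into the cross term and apply polarization/parallelogram identities, which is an algebraically equivalent manipulation.
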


\begin{proof}
The proof is provided in Section \ref{thm3.5proof}.
\end{proof}


We next prove that the modular algorithm \eqref{step1}-\eqref{step2} is also a first-order in time scheme. To analyze the rate of convergence of the approximation we assume that the true solution ${u}$ and ${p}$ and the body force $f$ of \eqref{eq:model} have regularity given by
\begin{gather*}
{u}\in L^{\infty}(0,T;H^{2}(\Omega))\cap
H^{2}(0,T;L^{2}(\Omega)),\quad
(-\Delta)^\alpha u\in  H^{1}(0,T;H^{\alpha}(R^d)),\\
{p} \in L^{2}(0,T;H^{1}(\Omega)),\quad \text{and} \quad f \in L^{2}%
(0,T;L^{2}(\Omega)).
\end{gather*}

 Let $w^{n+1}$ and $ u^{n+1}$ denote the solution to Stages 1 and 2 in algorithm \eqref{step1}-\eqref{step2}, respectively. Denote $e^{n}={u}(t_n)-u^n$ and  $\bar{e}^{n}={u}(t_n)-w^n$. Then, we have the following error estimate.

{\allowdisplaybreaks

\begin{theorem}
[Unconditional first-order convergence of the modular algorithm \eqref{step1}-\eqref{step2}]\label{thm:conv+Modu} 
Consider the modular algorithm \eqref{step1}-\eqref{step2}. Assuming $\Delta t\leq 1$, then for any $0\leq t_{N}\leq T$, there is a positive constant $C$
independent of time step $\Delta t$ and mesh size $h$ such that

\begin{equation}\label{err000}
\begin{aligned}
\Vert e^{N}\Vert^2_{L^2(\Omega)}+&\sum_{n=0}^{N-1}\Vert \bar{e}^{n+1}-e^n\Vert^2_{L^2(\Omega)}+\sum_{n=0}^{N-1}\Vert e^{n+1}-\bar{e}^n\Vert^2_{L^2(\Omega)}\\
&+\Delta t\sum_{n=0}^{N-1}\nu \Vert \nabla \bar{e}^{n+1}\Vert^2_{L^2(\Omega)}
+\Delta t\gamma C_{d,\alpha}\vertiii{e^N}_\alpha^2\\
&+\Delta t\sum_{n=0}^{N-1}\gamma \frac{3C_{d,\alpha}}{4}\vertiii{e^{n+1}-e^n}_\alpha^2
+\Delta t\sum_{n=0}^{N-1}\gamma \frac{C_{d,\alpha}}{4}\vertiii{e^{n+1}+e^n}_\alpha^2\\
&+2\Delta  t^2\gamma^2\Vert (-\Delta)^\alpha e^{N}\Vert_{L^2(\Omega)}^2
\leq C\Delta t^2.
\end{aligned}
\end{equation}
\end{theorem}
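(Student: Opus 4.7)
The plan is to mirror the energy argument of Theorem 3.5 applied to the errors $e^n=u(t_n)-u^n$ and $\bar{e}^n=u(t_n)-w^n$. Subtracting \eqref{step1} from \eqref{eq:model} evaluated at $t^{n+1}$ produces the Stage 1 error identity
\begin{equation*}
\frac{\bar{e}^{n+1}-e^n}{\Delta t}+b_\Omega(u(t_{n+1}),u(t_{n+1}),\cdot)-b_\Omega(u^n,w^{n+1},\cdot)-\nu\Delta\bar{e}^{n+1}+\gamma(-\Delta)^\alpha(u(t_{n+1})-u^n)+\nabla q^{n+1}=R(u(t_{n+1})),
\end{equation*}
with Euler residual $R(u(t_{n+1}))=\Delta t^{-1}(u(t_{n+1})-u(t_n))-u_t(t^{n+1})$ satisfying $\|R(u(t_{n+1}))\|_{L^2}^2\le C\Delta t\int_{t^n}^{t^{n+1}}\|u_{tt}\|_{L^2}^2\,dt$. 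Subtracting \eqref{step2} from its trivial true-solution counterpart gives the Stage 2 error relation
\begin{equation*}
(e^{n+1}-\bar{e}^{n+1})+2\Delta t\gamma(-\Delta)^\alpha(e^{n+1}-e^n)=2\Delta t\gamma(-\Delta)^\alpha\bigl(u(t_{n+1})-u(t_n)\bigr),
\end{equation*}
whose right-hand side is controlled by $C\Delta t^2$ under the regularity $(-\Delta)^\alpha u\in H^1(0,T;H^\alpha(R^d))$.

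I would then take the $L^2(\Omega)$ inner product of the Stage 1 identity with $\Delta t\,\bar{e}^{n+1}$ and of the Stage 2 relation with $e^{n+1}$, and add. The pressure term drops out since $\bar{e}^{n+1}$ is divergence-free, and the skew-symmetry of $b_\Omega$ kills $b_\Omega(u^n,\bar{e}^{n+1},\bar{e}^{n+1})$. Applying the two polarization identities
\begin{equation*}
(\bar{e}^{n+1}-e^n,\bar{e}^{n+1})=\tfrac12\bigl(\|\bar{e}^{n+1}\|^2-\|e^n\|^2+\|\bar{e}^{n+1}-e^n\|^2\bigr),\ (e^{n+1}-\bar{e}^{n+1},e^{n+1})=\tfrac12\bigl(\|e^{n+1}\|^2-\|\bar{e}^{n+1}\|^2+\|e^{n+1}-\bar{e}^{n+1}\|^2\bigr),
\end{equation*}
the $\|\bar{e}^{n+1}\|^2$ terms telescope and yield $\tfrac12(\|e^{n+1}\|^2-\|e^n\|^2)$ plus the two squared-jump norms that appear on the left of \eqref{err000}. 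The analogous polarization for the symmetric bilinear form induced by $(-\Delta)^\alpha$ produces the telescoping $\tfrac{\Delta t\gamma C_{d,\alpha}}{2}\bigl(\vertiii{e^{n+1}}_\alpha^2-\vertiii{e^n}_\alpha^2\bigr)$ and a $\vertiii{e^{n+1}-e^n}_\alpha^2$ term. To reach the exact coefficients $\tfrac{3C_{d,\alpha}}{4}$ and $\tfrac{C_{d,\alpha}}{4}$ as well as the $2\Delta t^2\gamma^2\|(-\Delta)^\alpha e^N\|^2$ term on the left of \eqref{err000}, I would rewrite $((-\Delta)^\alpha e^n,\bar{e}^{n+1})$ using the Stage 2 substitution $\bar{e}^{n+1}=e^{n+1}-2\Delta t\gamma(-\Delta)^\alpha(u^{n+1}-u^n)$, following exactly the algebraic pattern employed in the stability proof.

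All remaining terms are handled by Cauchy--Schwarz and Young. The nonlinear difference decomposes, after adding and subtracting $b_\Omega(u^n,\bar{e}^{n+1},\bar{e}^{n+1})$ and using skew-symmetry, as
\begin{equation*}
b_\Omega(u(t_{n+1})-u(t_n),u(t_{n+1}),\bar{e}^{n+1})+b_\Omega(e^n,u(t_{n+1}),\bar{e}^{n+1}),
\end{equation*}
each of which is bounded by the second trilinear inequality from the preliminaries combined with $u\in L^\infty(0,T;H^2)$ and absorbed into $\tfrac12\Delta t\nu\|\nabla\bar{e}^{n+1}\|_{L^2}^2$ plus lower-order terms proportional to $\|e^n\|_{L^2}^2$ and $\Delta t^2\int_{t^n}^{t^{n+1}}\|u_t\|_{L^2}^2\,dt$. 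The split $u(t_{n+1})-u^n=(u(t_{n+1})-u(t_n))+e^n$ handles $\gamma((-\Delta)^\alpha(u(t_{n+1})-u^n),\bar{e}^{n+1})$ in the same way, the first piece a $\Delta t$-order consistency residual controlled by the regularity on $(-\Delta)^\alpha u$ and the second merging with the fractional energy already present on the left. Summing from $n=0$ to $N-1$, using $e^0=\bar{e}^0=0$, and invoking the discrete Gronwall inequality of Girault--Raviart (with $\Delta t\le1$ to keep the Gronwall constant uniform) produces \eqref{err000}.

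The main obstacle is the algebraic bookkeeping in the second paragraph: reproducing the precise coefficients $\tfrac{3C_{d,\alpha}}{4}$, $\tfrac{C_{d,\alpha}}{4}$, and $2\Delta t^2\gamma^2$ on the left of \eqref{err000} requires chaining the polarization identities with the Stage 2 substitution in the very same pattern used for Theorem 3.5; a sign mistake or miscombination there would destroy the unconditional character of the estimate. Once that algebra mirrors the stability proof, the nonlinear bounds, truncation estimates, and Gronwall closure are routine.
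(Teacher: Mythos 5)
Your proposal follows essentially the same route as the paper's proof: derive the Stage 1 and Stage 2 error equations, test them against $\bar{e}^{n+1}$, $e^{n+1}$, $(e^{n+1}+\bar{e}^{n+1})/2$, and $\Delta t\gamma(-\Delta)^\alpha(e^{n+1}+e^n)$ in the same pattern as the stability proof of Theorem \ref{th:ModularIMEXEuler} to obtain the telescoping fractional energies with coefficients $\tfrac{3C_{d,\alpha}}{4}$, $\tfrac{C_{d,\alpha}}{4}$ and the $2\Delta t^2\gamma^2\Vert(-\Delta)^\alpha e^{N}\Vert^2$ term, then use the identical nonlinear decomposition, consistency bounds under $(-\Delta)^\alpha u\in H^1(0,T;H^\alpha(R^d))$, and the discrete Gronwall inequality. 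Apart from an inconsequential factor-of-two normalization in the test functions, this is the paper's argument.
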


\begin{proof}
The proof is provided in Section \ref{thm3.6proof}.
\end{proof}

}

\section{Truncation of interactions}\label{truncatedinteractions}

As already stated in Remark \ref{rem2}, it is impractical to integrate over all of $R^d$. Furthermore, it is reasonable to assume that the velocity and pressure fields at points that are far away from a point $x$ in the bounded domain of interest $\Omega$ have negligible effect on the velocity and pressure fields at points close to $x$. This is clearly true for the integrand kernels we consider in our model. Thus, we consider limiting the extent of nonlocal interactions for a point $x\in\Omega$ to the ball of radius $\lambda$ centered at $x\in\Omega$ denoted by
$$
\mathcal{B}_{\lambda}(x):=\{y\in \Omega \cup \Omega_\lambda: |y-x|\leq \lambda \}
$$
and define the interaction domain $\Omega_\lambda$ as in \eqref{lambda}.
We then consider the truncated variational problem
\begin{equation}\label{prob}
\left\{\begin{aligned}
 \int_{\Omega }\frac{\partial{u}_\lambda}{\partial t} v dx& +{b_\Omega} ({u}_\lambda, {u}_\lambda, v)+\nu \int_{\Omega} \nabla {u}_\lambda : \nabla v dx-\int_{\Omega} {p}_\lambda (\nabla\cdot v)dx\\
 &+\gamma \frac{C_{d,\alpha}}{2}\int_{\Omega \cup \Omega_\lambda }\int_{(\Omega\cup\Omega_\lambda)\cap\mathcal{B}_{\lambda}(x)} \frac{{u}_\lambda(x)-{u}_\lambda(y)}{|x-y|^{n+2\alpha}}(v(x)-v(y))dydx
 \\
 &\qquad\qquad= \int_{\Omega} f v dx \quad\forall\, v\in X\\
 \int_{\Omega} (\nabla\cdot {u}_\lambda) q dx&=0 \quad\forall\, q \in Q.
\end{aligned}\right.
\end{equation}
{\color{black}The nonlocal term in \eqref{prob} now involves a finite integration domain, in contrast to that in \eqref{eq:model} that features an infinite integration domain.}

{\color{black}We recall that for $\alpha\in (0,1)$, the fractional Sobolev space $W^{\alpha,p}(\Omega)$ is defined as
$$
W^{\alpha,p}(\Omega):=\bigg\{u\in L^p(\Omega): \frac{\vert u(x)-u(y)\vert}{\vert x-y\vert^{\frac{d}{p}+\alpha}}\in L^p(\Omega\times \Omega)\bigg\}
$$
equipped with the natural norm
$$
\Vert u\Vert_{W^{\alpha,p}(\Omega)}:= \left( \int_{\Omega}\vert u\vert^p dx + \int_{\Omega}\int_{\Omega} \frac{\vert u(x)-u(y)\vert^p}{\vert x-y\vert^{d+\alpha p}}dxdy\right)^{\frac{1}{p}}.
$$
For $p=2$, we have
$$
\Vert u\Vert_{H^{\alpha}(\Omega)}=\Vert u\Vert_{W^{\alpha,2}(\Omega)}= \left( \int_{\Omega}\vert u\vert^2 dx + \int_{\Omega}\int_{\Omega} \frac{\vert u(x)-u(y)\vert^2}{\vert x-y\vert^{d+2\alpha}}dxdy\right)^{\frac{1}{2}}.
$$
We define the following norm of $u$ by
$$
\vertiii {u}_{\alpha, \Omega\cup \Omega_{\lambda}
}= \left( \int_{\Omega\cup\Omega_{\lambda}}
\int_{(\Omega\cup\Omega_{\lambda})\cap B_{\lambda}(y)} \frac{\vert u(x)-u(y)\vert^2}{\vert x-y\vert^{d+2\alpha}}dxdy\right)^{\frac{1}{2}}.
$$

}

We define the velocity and pressure spaces 
$$
X:=H^\alpha_{\Omega}(\Omega\cup\Omega_\lambda)\cap H^1_{\Omega}(\Omega\cup\Omega_\lambda)= H^1_{\Omega}(\Omega\cup\Omega_\lambda)\quad\mbox{and}\quad
Q:=L_{0}^2(\Omega),
$$
respectively, where
$$
H^\alpha_{\Omega}(\Omega\cup\Omega_\lambda)=\{v\in H^\alpha(\Omega\cup\Omega_\lambda)\,\,:\,\, v=0 \text{ in } R^d \backslash \Omega \}
$$
and
$$
H^1_{\Omega}(\Omega\cup\Omega_\lambda):=\{v\in H^1(\Omega\cup\Omega_\lambda)
\,\,:\,\,
v=0 \text{ in } R^d \backslash \Omega \}.
$$

Let $V$ denote divergence free velocity space
$$
V:=\{ v\in X\,\,:\,\, (\nabla\cdot v, q)=0 \text{ }\forall\, q\in Q\}.
$$
 

In the next theorem, we analyze the error due to domain truncation and the rate of convergence with respect to $\lambda$. We assume that the true solution ${u}$ of \eqref{eq:model} has regularity given by
$$
{u}\in L^{2}(0,T;L^{2}(\Omega))\cap L^{\infty}(0,T; H^1(\Omega)).
$$
Also, we denote the error due to truncation by $e_\lambda=u-u_\lambda$, where $u$ and $u_\lambda$ denote the solutions of \eqref{eq:model} and \eqref{prob}, respectively.\footnote{For the simpler setting of the fractional Laplacian Poisson problem, the error due to truncation was considered in \cite{Max13}.}

\begin{theorem}
[Error due to domain truncation]
\label{thm:trancate} 
Let $I: =\min  \{ R: \Omega \subset B_R(x)\, \forall\, x\in \Omega\}$. Then, for any $0\leq t\leq T$, there exists a positive constant $C$
independent of the time step $\Delta t$ and the truncation radius $\lambda$ such that
\begin{equation}
\begin{aligned}
\Vert e_\lambda(x,t)\Vert^2_{L^2(\Omega)}+\int_{0}^t \frac{1}{16}\nu \Vert\nabla e_\lambda(x,x^\prime) &\Vert^2_{L^2(\Omega)} dx^\prime+\int_{0}^t \gamma \frac{C_{d,\alpha}}{2}\vertiii {e_\lambda(x,x^\prime)}_{\alpha, \Omega\cup\Omega_{\lambda}} dx^\prime\\
&\leq  C\left[ \left(\frac{1}{\lambda^{2\alpha}}\right)^2 + \left(\frac{(\lambda+I)^d}{\lambda^{d+2\alpha}} \right)^2+
\left(\frac{1}{\lambda^{d+2\alpha}}\right)^2 \right].
\end{aligned}
\end{equation}
In particular, if $\lambda\geq 1$, we have
\begin{equation}
\begin{aligned}
&\Vert e_\lambda(x,T)\Vert^2_{L^2(\Omega)}+\int_{0}^T \frac{1}{16}\nu \Vert\nabla e_\lambda(x,x^\prime) \Vert^2_{L^2(\Omega)} dx^\prime\\
&\qquad\qquad\qquad+\int_{0}^T \gamma \frac{C_{d,\alpha}}{2}\vertiii {e_\lambda(x,x^\prime)}_{\alpha, \Omega\cup\Omega_{\lambda}} dx^\prime
\leq  C\left(\frac{1}{\lambda^{2\alpha}}\right) ^2.
\end{aligned}
\end{equation}

\end{theorem}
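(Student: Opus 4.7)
The plan is to test the difference of the weak forms of \eqref{eq:model} and \eqref{prob} against $e_\lambda$, split the fractional-Laplacian contribution into a coercive part plus a residual that measures truncation, and close with Gronwall's inequality. Since both $u$ (extended by zero outside $\Omega$) and $u_\lambda$ vanish outside $\Omega$ and are divergence-free on $\Omega$, so is $e_\lambda$; testing the error equation with $v=e_\lambda$ eliminates the pressure and yields
\[
\tfrac12\tfrac{d}{dt}\|e_\lambda\|_{L^2(\Omega)}^2 + \nu\|\nabla e_\lambda\|_{L^2(\Omega)}^2 + \gamma\,\mathcal{E}(u,u_\lambda;e_\lambda) = -\bigl[b_\Omega(u,u,e_\lambda)-b_\Omega(u_\lambda,u_\lambda,e_\lambda)\bigr],
\]
where $\mathcal{E}$ gathers the fractional contributions. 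By skew-symmetry and $e_\lambda=u-u_\lambda$, the trilinear difference equals $b_\Omega(e_\lambda,u,e_\lambda)$, which the first trilinear inequality stated in the preliminaries bounds by $C\|e_\lambda\|_{L^2}^{1/2}\|\nabla e_\lambda\|_{L^2}^{3/2}\|\nabla u\|_{L^2}$; Young's inequality absorbs one copy of $\tfrac12\nu\|\nabla e_\lambda\|_{L^2}^2$ and leaves a residue of the form $C\nu^{-3}\|\nabla u\|_{L^2}^4\|e_\lambda\|_{L^2}^2$.

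Next, add and subtract the truncated fractional form applied to $u$ itself to obtain
\[
\mathcal{E}(u,u_\lambda;e_\lambda) = \tfrac{C_{d,\alpha}}{2}\vertiii{e_\lambda}_{\alpha,\Omega\cup\Omega_\lambda}^2 + \tfrac{C_{d,\alpha}}{2}\bigl[\mathcal{A}_{R^d\times R^d}(u,e_\lambda)-\mathcal{A}_{\mathrm{trunc}}(u,e_\lambda)\bigr],
\]
where $\mathcal{A}_D(v,w):=\iint_D (v(x)-v(y))(w(x)-w(y))|x-y|^{-(d+2\alpha)}dy\,dx$. The first term is the coercive Gagliardo contribution appearing on the left of the claimed estimate; the second term (the residual $\mathcal{R}$) depends only on the symmetric difference of the two integration domains. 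Using that $\mathrm{supp}\,u,\mathrm{supp}\,e_\lambda\subseteq\bar\Omega$ kills most cases and reduces this difference to configurations in which either (a) $x,y\in\Omega$ with $|x-y|>\lambda$, (b) $x\in\Omega$ and $y\notin\Omega\cup\Omega_\lambda$ (which forces $|x-y|>\lambda$ by definition of $\Omega_\lambda$), or (c) the mirror image of (b). Bounding $|(u(x)-u(y))(e_\lambda(x)-e_\lambda(y))|\le (|u(x)|+|u(y)|)(|e_\lambda(x)|+|e_\lambda(y)|)$, expanding, and applying Cauchy--Schwarz with the tail identity
\[
\int_{R^d\setminus B_\lambda(x)}|x-y|^{-(d+2\alpha)}\,dy=\frac{C_d}{2\alpha\,\lambda^{2\alpha}}
\]
yields three types of contributions: a ``one-far'' piece of order $\lambda^{-2\alpha}\|u\|_{L^2(\Omega)}\|e_\lambda\|_{L^2(\Omega)}$; a mixed piece in which one of $x,y$ sits in $\Omega$ (radius $\le I$) while the other ranges over $\Omega\cup\Omega_\lambda$ at distance $>\lambda$, yielding the factor $(\lambda+I)^d/\lambda^{d+2\alpha}$; and a ``doubly-far'' piece of order $\lambda^{-(d+2\alpha)}$.

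Plugging these residual bounds back into the energy identity, applying Young's inequality once more to separate $\|e_\lambda\|_{L^2}$ from each of the three $\lambda$-factors (producing the squared factors $\lambda^{-4\alpha}$, $(\lambda+I)^{2d}/\lambda^{2(d+2\alpha)}$, and $\lambda^{-2(d+2\alpha)}$), and integrating on $[0,t]$ with $e_\lambda(\cdot,0)=0$ gives an inequality to which the integral form of Gronwall's lemma applies; the exponential factor is controlled by $\int_0^T\|\nabla u\|_{L^2}^4\,ds\le T\|u\|_{L^\infty(0,T;H^1)}^4<\infty$, which yields the first inequality. The second follows because for $\lambda\ge 1$ each of $\lambda^{-2\alpha}$, $(\lambda+I)^d\lambda^{-(d+2\alpha)}$, and $\lambda^{-(d+2\alpha)}$ is bounded by $C(I,d,\alpha)\,\lambda^{-2\alpha}$. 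The main obstacle is the residual bookkeeping in the second paragraph: one must carefully enumerate the symmetric difference of the two integration domains, exploit the supports of $u$ and $e_\lambda$ to kill several pieces, and keep track of where the two distinct power laws $\lambda^{-2\alpha}$ and $\lambda^{-(d+2\alpha)}$ (and their $(\lambda+I)^d$ accompaniment) originate, since a naive single-Gagliardo bound on $\mathcal{R}$ would destroy the explicit rate in $\lambda$.
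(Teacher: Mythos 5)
Your proposal is correct and follows essentially the same route as the paper's proof: the same error equation tested with $e_\lambda$ (pressure eliminated by divergence-freeness), the same splitting of the fractional term into the coercive truncated Gagliardo form of $e_\lambda$ plus a truncation residual involving only $u$, the same support-plus-tail-integral bookkeeping producing the three rates $\lambda^{-2\alpha}$, $(\lambda+I)^d\lambda^{-(d+2\alpha)}$, $\lambda^{-(d+2\alpha)}$, and a Gronwall argument in time using $u\in L^\infty(0,T;H^1(\Omega))$. One small slip: your case list (a)--(c) omits the pairs with one point in $\Omega$ and the other in $\Omega_\lambda$ at distance greater than $\lambda$, but your subsequent ``mixed piece'' (the source of the $(\lambda+I)^d/\lambda^{d+2\alpha}$ factor via $\mathrm{Vol}(\Omega_\lambda)\lesssim(\lambda+I)^d$) is exactly that contribution, so the accounting is in the end complete and matches the paper's.
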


\begin{proof}
The proof is provided in Section \ref{thm4.1proof}.
\end{proof}

\section{Finite element approximations}\label{sec:femfem}

We denote by $X_h\subset X$ and $ Q_h\subset Q$ conforming velocity and pressure finite element spaces, respectively, based on an edge-to-edge triangulation of $\Omega\cup\Omega_\lambda$ with maximum triangle diameter $h$ and with $\partial\Omega$ consisting of triangle vertices and/or edges. We assume that $X_h$ and $Q_h$ satisfy the usual discrete inf-sup condition and the approximation properties, \cite{Layton08}
\begin{align}
\inf_{v_h\in X_h}\| v- v_h \|_{L^2(\Omega)}&\leq C h^{k+1}\Vert u \Vert_{H^{k+1}(\Omega)}&\forall v\in [H^{k+1}(\Omega)]^d\label{Interp1}\\
\inf_{v_h\in X_h}\| \nabla ( v- v_h )\|_{L^2(\Omega)}&\leq C h^k \Vert v\Vert_{H^{k+1}(\Omega)}&\forall v\in [H^{k+1}(\Omega)]^d\label{interp2}\\
\inf_{q_h\in Q_h}\|  q- q_h \|_{L^2(\Omega)}&\leq C h^{s+1}\Vert p\Vert_{H^{s+1}(\Omega)}&\forall q\in H^{s+1}(\Omega)\label{interp3}
\end{align}
for a constant $C>0$ having value independent of $h$. For instance, the commonly used Taylor-Hood $P^{s+1}$-$P^{s}$, $s\geq 1$, element pairs, \cite{G89}, satisfy both the discrete LBB condition and the approximation properties \eqref{Interp1}-\eqref{interp3}.

We define the usual explicitly skew symmetric trilinear form
\[
{\widetilde b}_{\Omega}(u,v,w):=\frac{1}{2}(u\cdot\nabla v,w)-\frac{1}{2}(u\cdot\nabla w,v).
\]
which satisfies the bound \cite{Layton08}
\begin{gather}
{\widetilde b}_{\Omega}(u,v,w)\leq C \left(\Vert \nabla u\Vert\Vert u\Vert\right)^{1/2}\Vert\nabla v\Vert\Vert\nabla
w \Vert\text{ for all } u, v, w \in X .\label{In1}
\end{gather}

We use the first-order IMEX Euler time stepping scheme presented in Section \ref{imexmethod} for time discretization and the Taylor-Hood P2-P1 element pair for spatial discretization. Let $u_{h,\lambda}^0$ denote an approximation to $u_0$, e.g., the $X_h$-interpolant of the initial datum $u_0$. Then, the fully discrete approximation of \eqref{prob} is given as follows. For $n=0,1,\ldots,N=T/\Delta t$, given $u_{h,\lambda}^n$, find $u_{h,\lambda}^{n+1}\in X_h$ and $p_{h,\lambda}^{n+1}\in Q_h$ satisfying
\begin{equation}\label{full}
\left\{\begin{aligned}
&\int_{\Omega}\frac{u_{h,\lambda}^{n+1}-u_{h,\lambda}^n}{\Delta t} v dx +{\widetilde b}_{\Omega}(u_{h,\lambda}^n,u_{h,\lambda}^{n+1},v)+\nu\int_\Omega \nabla u_{h,\lambda}^{n+1}:\nabla v dx 
\\
&+\gamma\frac{C_{d,\alpha}}{2}\int_{\Omega\cup
\Omega_{\lambda}}\int_{\Omega\cup
\Omega_\lambda\cap
\mathcal{B}_\lambda(x)}\frac{u_{h,\lambda}^{n+1}(x)-u_{h,\lambda}^{n+1}(x^\prime)}{|x-x^\prime|^{d+2\alpha}}(v(x)-v(x^\prime))dx^\prime dx
\\
&\qquad\qquad\qquad\qquad\qquad\qquad-\int_{\Omega}p_{h,\lambda}^{n+1}(\nabla\cdot v)dx=\int_{\Omega} f^{n+1} v dx\quad\forall v\in X_h
\\
&\int_{\Omega}\nabla\cdot u_{h,\lambda}^{n+1}qdx=0\quad \forall q\in Q_h.
\end{aligned}\right.
\end{equation}
Gaussian quadrature rules are used for approximating the integrals involved in \eqref{full} and the CLapack software suite \cite{lapack} is used for solving the resulting linear systems. Of course, domain truncation introduces an error additional to the spatial/temporal discretization error; see Theorem \ref{thm:trancate}.

\subsection{Error analysis for the fully discrete scheme}\label{errorest}

The focus of this section is the analysis on temporal accuracy of the time stepping method and the spatial accuracy associated with the finite element approximation for the proposed method.

Let ${u_\lambda^{n}}={u_\lambda}(t^{n})$ and  ${p_\lambda^{n}}={p_\lambda}(t^{n})$, where $({u_\lambda}, {p_\lambda})$ denotes the solution of the truncated variational problem \eqref{prob}. To analyze the rate of convergence of the approximation, we assume that ${u_\lambda}$ and ${p_\lambda}$ and the body force $f$ of \eqref{prob} satisfy
\begin{gather*}
{u_\lambda}\in L^{\infty}(0,T; H^1(\Omega))\cap L^{4}(0,T;H^{k+1}(\Omega))\cap
H^{2}(0,T;L^{2}(\Omega))\cap H^{1}(0,T;H^{k+1}(\Omega)),\\
{p_\lambda} \in L^{2}(0,T;H^{s+1}(\Omega)),\quad \text{and} \quad f \in L^{2}%
(0,T;L^{2}(\Omega)).
\end{gather*}
Let $e_{h,\lambda}^n={u_\lambda^{n}}-u_{h,\lambda}^n$ denote the error between the solution of truncated variational problem \eqref{prob} and the approximation obtained from the fully discrete scheme \eqref{full}. Then, we have the following error estimate.

\begin{theorem}
[Convergence of the fully discrete scheme \eqref{full}]\label{fullydiscreteerror}
For any $0\leq t^{N}\leq T$, there exists a positive constant $C$
independent of the time step $\Delta t$ and mesh size $h$ such that
\begin{equation}\label{full-err}
\begin{aligned}
&\frac{1}{2}\Vert e_{h,\lambda}^{N}\Vert_{L^2(\Omega)}^{2}
+\frac{1}{2}\Delta t\sum_{n=0}^{N-1}\nu
\Vert \nabla e_{h,\lambda}^{n+1}\Vert^{2}_{L^2(\Omega)}
\\
&\qquad\qquad\qquad\leq C\big(\|\nabla
e_{h,\lambda}^{0}\|^{2}_{L^2(\Omega)}+\Vert e_{h,\lambda}^{0}\Vert_{L^2(\Omega)}^{2}
+h^{2k}+h^{2s+2}+\Delta t^2\big).
\end{aligned}
\end{equation}
\end{theorem}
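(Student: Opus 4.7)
The plan follows a standard finite element plus IMEX Euler error analysis for the Navier--Stokes equations, adapted to incorporate the nonlocal fractional Laplacian term as in the commented proofs of Theorems \ref{th:errIMEX-Euler} and \ref{thm:conv+Modu}. Overall strategy: derive an error equation, split the error via a Stokes projection into an interpolation part and a discrete part, test with the discrete part, bound each term using \eqref{Interp1}--\eqref{interp3} and \eqref{In1}, and conclude via the discrete Gronwall inequality.

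First I would evaluate the truncated variational formulation \eqref{prob} at $t^{n+1}$ against test functions $v\in X_h$ and subtract the fully discrete scheme \eqref{full}, introducing the time-truncation residual $R(u_\lambda^{n+1})=(u_\lambda^{n+1}-u_\lambda^n)/\Delta t-u_{\lambda,t}(t^{n+1})$. Let $\widetilde u^{n+1}\in X_h$ denote the Stokes projection of $u_\lambda^{n+1}$, and set $\eta^{n+1}=u_\lambda^{n+1}-\widetilde u^{n+1}$ and $\phi_h^{n+1}=\widetilde u^{n+1}-u_{h,\lambda}^{n+1}\in X_h$, so that $e_{h,\lambda}^{n+1}=\eta^{n+1}+\phi_h^{n+1}$ and $\phi_h^{n+1}$ is discretely divergence free. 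Testing with $v=\phi_h^{n+1}$ eliminates the discrete pressure, while the continuous pressure contributes only $\int_\Omega(p_\lambda^{n+1}-q_h)(\nabla\cdot\phi_h^{n+1})\,dx$ for an arbitrary $q_h\in Q_h$, which is bounded via \eqref{interp3} by $Ch^{2s+2}\|p_\lambda^{n+1}\|_{H^{s+1}(\Omega)}^2$.

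For the time-difference term I would use the identity $2(a-b,a)=\|a\|^2-\|b\|^2+\|a-b\|^2$ to produce the telescoping structure $\tfrac12\|\phi_h^{n+1}\|_{L^2(\Omega)}^2-\tfrac12\|\phi_h^n\|_{L^2(\Omega)}^2+\tfrac12\|\phi_h^{n+1}-\phi_h^n\|_{L^2(\Omega)}^2$; the viscous form yields $\nu\Delta t\|\nabla\phi_h^{n+1}\|_{L^2(\Omega)}^2$ and, upon symmetrization, the nonlocal form yields the coercive term $\Delta t\gamma\tfrac{C_{d,\alpha}}{2}\vertiii{\phi_h^{n+1}}_{\alpha,\Omega\cup\Omega_\lambda}^2$ on the left. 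The consistency residual is handled by Taylor expansion via $\|R(u_\lambda^{n+1})\|_{L^2(\Omega)}^2\le \Delta t\int_{t^n}^{t^{n+1}}\|u_{\lambda,tt}\|_{L^2(\Omega)}^2\,dt$, and the cross-terms generated by $\eta^{n+1}$ and $(\eta^{n+1}-\eta^n)/\Delta t$ against the time-difference, viscous and nonlocal forms are bounded using \eqref{Interp1}--\eqref{interp2} together with the regularities $u_\lambda\in L^\infty(0,T;H^{k+1}(\Omega))\cap H^1(0,T;H^{k+1}(\Omega))$, producing terms of order $h^{2k}$ and $\Delta t^2$.

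The main obstacle will be the nonlinear mismatch ${\widetilde b}_\Omega(u_\lambda^{n+1},u_\lambda^{n+1},\phi_h^{n+1})-{\widetilde b}_\Omega(u_{h,\lambda}^n,u_{h,\lambda}^{n+1},\phi_h^{n+1})$. Following the template in the commented proof of Theorem \ref{th:errIMEX-Euler}, I would add and subtract ${\widetilde b}_\Omega(u_\lambda^n,u_\lambda^{n+1},\phi_h^{n+1})$ (and an intermediate term involving $\widetilde u^{n+1}$ to exploit the splitting $e_{h,\lambda}^n=\eta^n+\phi_h^n$) to separate out a piece proportional to $u_\lambda^{n+1}-u_\lambda^n$ yielding $O(\Delta t^2)$ via $\int_{t^n}^{t^{n+1}}\|u_{\lambda,t}\|_{L^2(\Omega)}^2\,dt$, a piece linear in $\eta^n$ yielding $O(h^{2k})$, and a piece linear in $\phi_h^n$ that will be absorbed by the Gronwall step. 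Each is controlled via \eqref{In1}, Young's inequality with a small parameter chosen so that $\tfrac{\nu}{8}\Delta t\|\nabla\phi_h^{n+1}\|_{L^2(\Omega)}^2$ is absorbed on the left, combined with the regularity $u_\lambda\in L^4(0,T;H^{k+1}(\Omega))\cap L^\infty(0,T;H^1(\Omega))$. Summing from $n=0$ to $N-1$ and applying the discrete Gronwall inequality (e.g., as used in \cite{Layton08}) bounds $\|\phi_h^N\|_{L^2(\Omega)}^2+\sum_{n=0}^{N-1}\Delta t\nu\|\nabla\phi_h^{n+1}\|_{L^2(\Omega)}^2$ by $C(h^{2k}+h^{2s+2}+\Delta t^2)$ plus the contribution from $\phi_h^0$; invoking the triangle inequality $\|e_{h,\lambda}^N\|_{L^2(\Omega)}\le\|\eta^N\|_{L^2(\Omega)}+\|\phi_h^N\|_{L^2(\Omega)}$ and \eqref{Interp1}--\eqref{interp2} then yields \eqref{full-err}.
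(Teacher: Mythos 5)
Your proposal is correct and follows essentially the same route as the paper's proof: error equation against discretely divergence-free test functions, splitting $e_{h,\lambda}^{n}$ into an approximation part and a discrete part, polarization plus coercivity of the viscous and nonlocal forms, the same add-and-subtract treatment of the nonlinear mismatch, pressure handled through $p_\lambda^{n+1}-q_h$ and \eqref{interp3}, and a discrete Gronwall step followed by the triangle inequality. The only difference is cosmetic: you use a Stokes projection where the paper uses an interpolant $I_h u_\lambda^n\in V_h$ (and the paper additionally invokes the discrete stability bound on $u_{h,\lambda}^n$ when estimating ${\widetilde b}_{\Omega}(u_{h,\lambda}^{n},\eta^{n+1},\cdot)$), but the structure and resulting rates are identical.
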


\begin{proof}
The proof is provided in Section \ref{thm5.1proof}.
\end{proof}

For the $P^2-P^1$ Taylor-Hood element pair ($k=2, s=1,$), i.e., the $C^{0}$ piecewise
quadratic velocity space $X_{h}$ and $C^{0}$ the piecewise linear pressure space
$Q_{h}$, we have the following estimate.

\begin{corollary}
Assuming that $(X_{h},Q_{h}$) is given
by the $P^2-P^1$ Taylor-Hood element pair and that $\Vert e_{h,\lambda}^{0}\Vert_{L^2(\Omega)}$ and $\Vert\nabla e_{h,\lambda}^0\Vert_{L^2(\Omega)}$ are both at least  $O(h^2)$ accurate, we have
\begin{gather}
\frac{1}{2}\Vert e_{h,\lambda}^{N}\Vert_{L^2(\Omega)}^{2}+\Delta t\sum_{n=0}^{N-1}\frac{\nu}{2}\Vert\nabla
e_{h,\lambda}^{n+1}\Vert_{L^2(\Omega)}^{2} 
\leq C (h^4 + \Delta t^2 )\text{ .}
\end{gather}

\end{corollary}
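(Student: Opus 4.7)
The corollary follows by direct substitution into Theorem \ref{fullydiscreteerror}. First I would specialize the parameters of the approximation spaces to the $P^2$-$P^1$ Taylor-Hood pair, giving $k=2$ for the velocity and $s=1$ for the pressure in the approximation estimates \eqref{Interp1}--\eqref{interp3}. Substituting these values into the right-hand side of \eqref{full-err} turns the term $h^{2k}$ into $h^4$ and the term $h^{2s+2}$ also into $h^4$.

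Next I would invoke the assumed accuracy of the initial data: since both $\Vert e_{h,\lambda}^{0}\Vert_{L^2(\Omega)}$ and $\Vert\nabla e_{h,\lambda}^{0}\Vert_{L^2(\Omega)}$ are at least $O(h^2)$, their squares contribute $O(h^4)$ to the bound. Collecting these contributions yields
\[
\tfrac{1}{2}\Vert e_{h,\lambda}^{0}\Vert_{L^2(\Omega)}^{2}+\tfrac{1}{2}\Vert \nabla e_{h,\lambda}^{0}\Vert_{L^2(\Omega)}^{2}+h^{2k}+h^{2s+2}+\Delta t^2 \leq C\,(h^4+\Delta t^2),
\]
possibly after absorbing constants into $C$. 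Combined with Theorem \ref{fullydiscreteerror}, this gives the claimed estimate. There is no essential obstacle here since all ingredients are already in place; the step that requires a minimum of care is simply verifying that the exponents from the two component approximation properties coincide at $h^4$ for this particular element pair, so that no lower-order contribution from pressure interpolation pollutes the velocity error bound.
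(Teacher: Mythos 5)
Your proposal is correct and is exactly the intended derivation: the paper states this corollary without a separate proof precisely because it follows by setting $k=2$, $s=1$ in Theorem \ref{fullydiscreteerror} and noting that the assumed $O(h^2)$ accuracy of $\Vert e_{h,\lambda}^{0}\Vert_{L^2(\Omega)}$ and $\Vert\nabla e_{h,\lambda}^{0}\Vert_{L^2(\Omega)}$ makes their squared contributions $O(h^4)$. Your observation that both $h^{2k}$ and $h^{2s+2}$ coincide at $h^4$ for the Taylor-Hood pair, so the pressure interpolation does not degrade the rate, is the only point needing care and you handled it correctly.
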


Combining with the result of Theorem \ref{thm:trancate} concerning the error due to domain truncation, we have the following result.

\begin{theorem}\label{th:original-err}
Let $u(t^n)$ denote the solution of the original turbulence model \eqref{eq:model}, ${u_\lambda}(t^n)$ denote the solution of the truncated variational problem \eqref{prob}, and $u^n_{h,\lambda}$ denote the solution of the fully discrete problem \eqref{full}. Assume that $(X_{h},Q_{h}$) is given
by the $P^2-P^1$ Taylor-Hood element pair and that $\Vert e_{h,\lambda}^{0}\Vert_{L^2(\Omega)}$ and $\Vert\nabla e_{h,\lambda}^0\Vert_{L^2(\Omega)}$ are both at least  $O(h^2)$ accurate. Then, for any $0\leq t^{N}\leq T$, there exists a positive constant $C$
independent of time step $\Delta t$ and the mesh size $h$ such that
\begin{align*}
\Vert u(t^n)-u^n_{h,\lambda}\Vert_{L^2(\Omega)}^2&=\Vert u(t^n)-{u_\lambda}(t^n)\Vert_{L^2(\Omega)}^2+\Vert {u_\lambda}(t^n)-u^n_{h,\lambda}\Vert_{L^2(\Omega)}^2\\
&\leq C (\frac{1}{\lambda^{4\alpha}}+h^4+\Delta t^2).
\end{align*}
\end{theorem}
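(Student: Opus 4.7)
The plan is to decompose the total error via the triangle inequality and then invoke the two previously established bounds, namely Theorem \ref{thm:trancate} for the truncation error and Corollary 5.2 (the $P^2$-$P^1$ specialization of Theorem \ref{fullydiscreteerror}) for the fully discrete error. Writing
\[
u(t^n)-u^n_{h,\lambda}=\big(u(t^n)-u_\lambda(t^n)\big)+\big(u_\lambda(t^n)-u^n_{h,\lambda}\big),
\]
I would apply $\Vert a+b\Vert^2_{L^2(\Omega)}\leq 2\Vert a\Vert^2_{L^2(\Omega)}+2\Vert b\Vert^2_{L^2(\Omega)}$ (the equality-looking expression in the statement should be read as bounding the total error by the sum of the two pieces, up to a harmless constant that can be absorbed into $C$).

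For the first piece, the assumption $\lambda\ge 1$ (which is natural, since $\lambda$ is taken large in the domain-truncation approximation) allows me to invoke the second bound of Theorem \ref{thm:trancate}, giving $\Vert u(t^n)-u_\lambda(t^n)\Vert^2_{L^2(\Omega)}\leq C/\lambda^{4\alpha}$ uniformly in $t^n\in[0,T]$. For the second piece, Corollary 5.2 yields $\Vert u_\lambda(t^n)-u^n_{h,\lambda}\Vert^2_{L^2(\Omega)}\leq C(h^4+\Delta t^2)$, using the stipulated $O(h^2)$ accuracy of the initial approximation $u^0_{h,\lambda}$ and the regularity assumptions on $u_\lambda$ and $p_\lambda$ already invoked in Theorem \ref{fullydiscreteerror}.

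Adding the two bounds produces
\[
\Vert u(t^n)-u^n_{h,\lambda}\Vert^2_{L^2(\Omega)}\leq C\bigg(\frac{1}{\lambda^{4\alpha}}+h^4+\Delta t^2\bigg),
\]
which is the claimed estimate. There is no real obstacle here: the proof is a clean splitting argument, and the work is already done in Theorem \ref{thm:trancate} and Theorem \ref{fullydiscreteerror}. The only subtle point worth a brief remark is consistency of the constants: the constant from Theorem \ref{thm:trancate} depends on $\Omega$, $T$, $\nu$, $\gamma$, $\alpha$, and norms of $u$ and $f$, while that from Corollary 5.2 additionally depends on the finite element spaces through the interpolation constants and on norms of $u_\lambda$ and $p_\lambda$. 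Since Theorem \ref{thm:trancate} gives $L^{\infty}(0,T;L^2(\Omega))$ control of $u-u_\lambda$ in terms of the regularity of $u$ (which is assumed), the norms of $u_\lambda$ required by Corollary 5.2 can be bounded in terms of those of $u$, so the final constant $C$ is indeed independent of $\Delta t$, $h$, and $\lambda$, as asserted.
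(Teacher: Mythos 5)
Your proposal is correct and follows essentially the same route as the paper, which obtains this result simply by combining Theorem \ref{thm:trancate} (truncation error, using the $\lambda\geq 1$ bound) with the corollary to Theorem \ref{fullydiscreteerror} for the $P^2$-$P^1$ pair; your reading of the displayed ``equality'' as a triangle-inequality splitting with the factor $2$ absorbed into $C$ is the right interpretation. Note only that the regularity of $u_\lambda$ and $p_\lambda$ needed for the fully discrete estimate is assumed directly in \S\ref{errorest}, so you need not (and, from the $L^\infty(0,T;L^2)$-type control of Theorem \ref{thm:trancate} alone, could not) derive it from the regularity of $u$.
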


\subsection{Convergence study}

We illustrate the results of Section \ref{errorest} by considering the manufactured solution
\begin{equation}\label{solution}
u(x,y,t)=x^2y+\sin(t),\quad
v(x,y,t)=-xy^2+\sin(t),\quad
p(x,y,t)=\sin(x)+\sin(y)
\end{equation}
over a square domain $\Omega=[0,1]^2.$ The initial condition $u_0$ and the right-hand side function $f$ are determined by substituting the exact solution \eqref{solution} into \eqref{prob}. Note that now the volume constraint is inhomogeneous and is also set to the exact solution \eqref{solution}. We focus on the errors due to the temporal and finite element discretizations; for illustrations of the errors due to domain truncation, see \cite{Max13}. In addition, we examine convergence behaviors with respect to other norms that are not covered by the theoretical results of Section \ref{errorest}. 

Errors of the approximate solutions are given in Figure \ref{fig:error} for $\Delta t=h$ (left) and $\Delta t=h^2$ (right), respectively, where $h$ denotes the mesh size and $\Delta t$ the time step. The time interval of interest is set to $[0,0.5]$ and the horizon parameter is set to $\lambda=1.$ First, for errors measured in the $H^1(\Omega)$ norm for the two components $u_1$ and $u_2$ of the velocity, one observes the expected linear convergence for $\Delta t=h$ and quadratic convergence for $\Delta t=h^2$.  For $L^2(\Omega)$ errors for the pressure $p$, linear convergence is also observed if $\Delta t=h^2$. In addition, for $\Delta t=h^2$, we observe that the errors in the $L^2(\Omega)$ and $L^\infty(\Omega)$ velocity components seem to exhibit near cubic convergence. Quantitative information along these lines is given in Tables \ref{tab:converge_h} and \ref{tab:converge_h2}.

Clearly, the efficacy of the turbulence closure model studied in this paper needs to be demonstrated through computational testing in more realistic settings. This is the subject of future work.

\begin{center}
\begin{figure}[h!]\centering\footnotesize
$$
\begin{array}{cc}
\includegraphics[width=.475\textwidth]{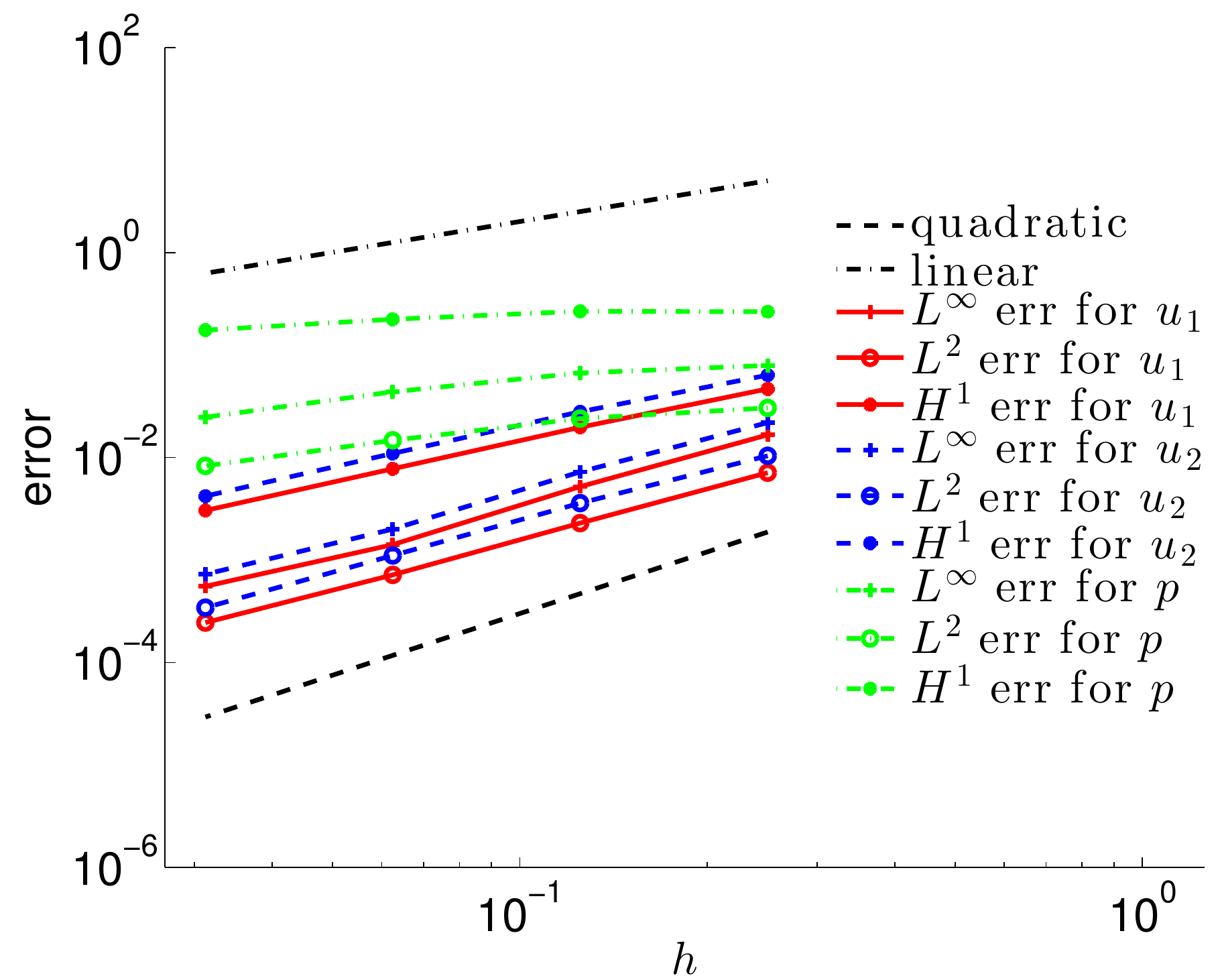}
&
\includegraphics[width=.475\textwidth]{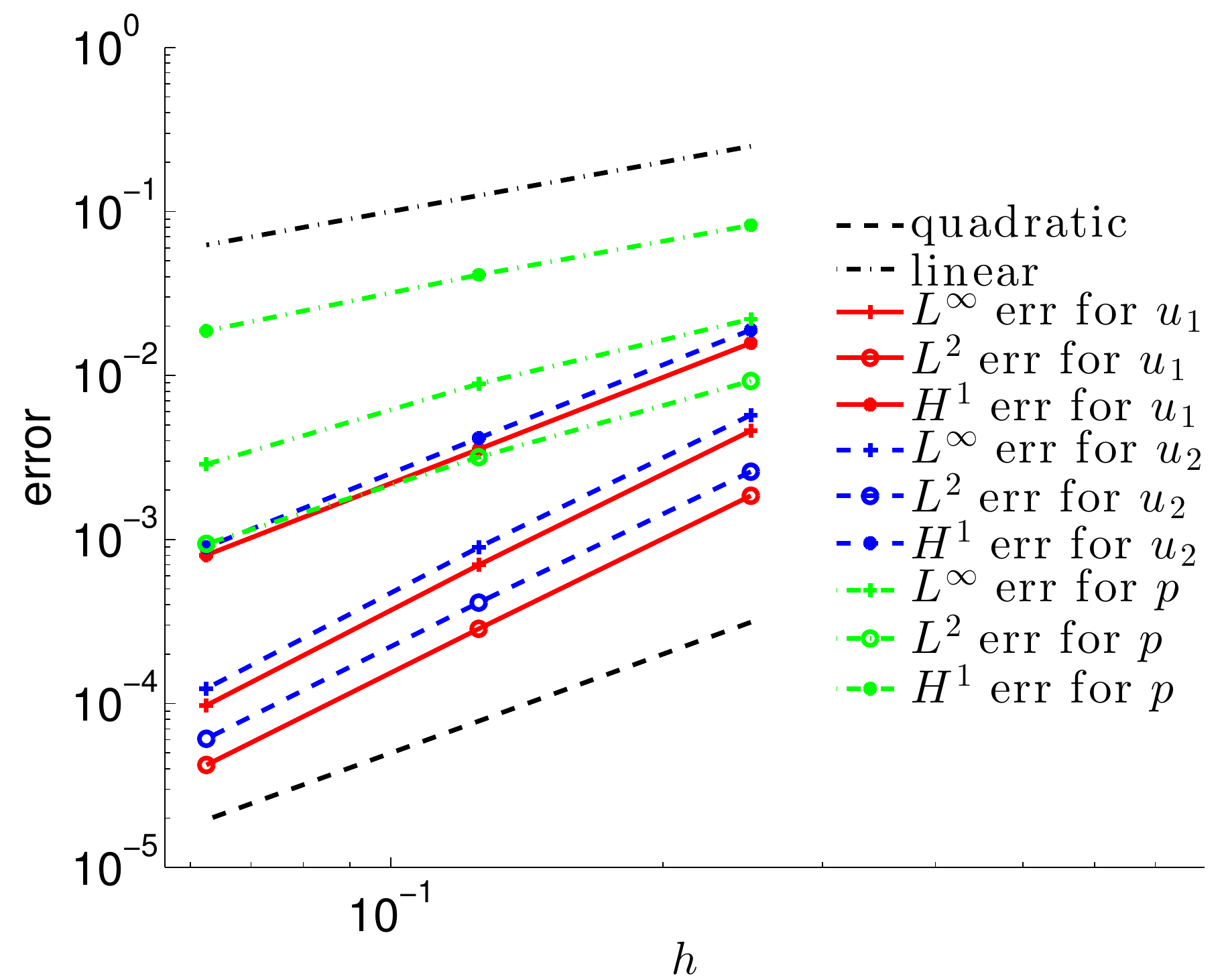}
\\
\Delta t=h
&
\Delta t=h^2
\end{array}
$$
\caption{Error behaviors of solutions of the fully discretized system \eqref{full} with  $\gamma=1$ and $\lambda=1$ for the exact solution \eqref{solution}.}\label{fig:error}
\end{figure}
\end{center}

\begin{table}[h!]
\caption{Computational results for $\Delta t=h$ for the exact solution \eqref{solution}.}\label{tab:converge_h}
\begin{center}\footnotesize
\renewcommand{\arraystretch}{1.3}
\begin{tabular}{|c|c|c||c|c||c|c|c|c||c|c||c|c||c|c||c|c||c|c|}\hline 
& \multicolumn{6}{c|}{errors for variable $u_1$}\\\hline
$h$&$L^\infty$ error &rate&$L^2$ error&rate&$H^1$ error&rate\\\hline
1/4&1.6579e-02 &--&           	7.1046e-03 &--&           	4.6749e-02&  -- \\\hline
1/8&5.1860e-03 &      1.6767 &	2.2907e-03 &      1.6329 &	1.9687e-02 &     1.2477	\\\hline
1/16&1.4107e-03&      1.8782 & 	7.1371e-04&      1.6824&	7.7386e-03&      1.3471	\\\hline
1/32&5.5281e-04&      1.3516&	2.4519e-04&      1.5415&	3.0393e-03&      1.3484	\\\hline
& \multicolumn{6}{c|}{errors for variable $u_2$}\\\hline
$h$&$L^\infty$ error &rate&$L^2$ error&rate&$H^1$ error&rate\\\hline
1/4&2.1887e-02&--&            	1.0398e-02&--&            	6.3696e-02 &\\\hline
1/8&7.2414e-03&      1.5957&	3.5802e-03&      1.5382&	2.7841e-02&  --    1.1940\\\hline
1/16&2.0032e-03&      1.8539&	1.1045e-03&      1.6966&	1.0931e-02&      1.3488	\\\hline
1/32&7.2338e-04&      1.4695&	3.4263e-04&      1.6887&	4.1859e-03&      1.3848	\\\hline
& \multicolumn{6}{c|}{errors for variable $p$}\\\hline
$h$&$L^\infty$ error &rate&$L^2$ error&rate&$H^1$ error&rate\\\hline
1/4&7.9094e-02&--&            	3.0478e-02&--&            	2.6438e-01  & --    \\\hline
1/8&	6.6634e-02&      0.2473&	2.3927e-02&      0.3491&	2.6682e-01&     -0.0132\\\hline
1/16&4.3610e-02&      0.6116&	1.4726e-02&      0.7002&	2.2287e-01&      0.2597\\\hline
1//32&2.4691e-02&      0.8207&	8.2661e-03&      0.8331&	1.7450e-01&      0.3530\\\hline
 \end{tabular}
\end{center}
\end{table}

\begin{table}[h!]
\caption{Computational results for $\Delta t=h^2$ for the exact solution \eqref{solution}.}\label{tab:converge_h2}
\begin{center}\footnotesize
\renewcommand{\arraystretch}{1.3}
\begin{tabular}{|c||c|c||c|c||c|c|c|c||c|c||c|c||c|c||c|c||c|c|}\hline 
& \multicolumn{6}{c|}{errors for variable $u_1$}\\\hline
$h$&$L^\infty$ error &rate&$L^2$ error&rate&$H^1$ error&rate\\\hline
1/4&4.5933e-03&--&            	1.8486e-03&--&            	1.5748e-02&   --        	  	\\\hline
1/8&6.9876e-04&      2.7166&	2.8486e-04&      2.6981&	3.5475e-03&      2.1503\\\hline	
1/16&9.7215e-05 &     2.8456&	4.2196e-05&      2.7551&	8.0164e-04&      2.1458	\\\hline
& \multicolumn{6}{c|}{errors for variable $u_2$}\\\hline
$h$&$L^\infty$ error &rate&$L^2$ error&rate&$H^1$ error&rate\\\hline
1/4&5.7354e-03&--&            	2.5917e-03&--&            	1.8923e-02&   --      \\\hline
1/8&8.9519e-04&     2.6796&	4.1187e-04&      2.6537&	4.1429e-03&      2.1914	\\\hline
1/16&1.2319e-04&      2.8614&	6.0907e-05&      2.7575&	8.9762e-04&      2.2065	\\\hline
& \multicolumn{6}{c|}{errors for variable $p$}\\\hline
$h$&$L^\infty$ error &rate&$L^2$ error&rate&$H^1$ error&rate\\\hline
1/4&2.2096e-02&--&            	9.2527e-03&--&            	8.2484e-02&   --    \\\hline
1/8&8.8834e-03&      1.3146&	3.1754e-03&      1.5429&	4.1099e-02&      1.0050\\\hline
1/16&2.8736e-03&      1.6283&	9.4044e-04&      1.7555&	1.8666e-02&      1.1387\\\hline
 \end{tabular}
\end{center}
\end{table}

}

\appendix

\section{Proofs of theorems}

\subsection{Proof of Theorem \ref{th:ModularIMEXEuler}}\label{thm3.5proof}

{\allowdisplaybreaks

Take the inner product of \eqref{step1} with $2\Delta t w^{n+1}$ and integrate over $R^d$. By the polarization identity and the skewness of the nonlinear term, we have
\begin{equation}
\begin{aligned}
\Vert w^{n+1}\Vert_{L^2(\Omega)}^2-\Vert u^n \Vert_{L^2(\Omega)}^2& + \Vert w^{n+1}- u^n\Vert_{L^2(\Omega)}^2
+2\Delta t \nu \Vert \nabla w^{n+1}\Vert_{L^2(\Omega)} ^2 \label{Modu1}\\
& =2 \Delta t \int_{\Omega}f^{n+1} \cdot w^{n+1}dx -2\Delta t\gamma \int_{\Omega} (-\Delta)^\alpha
u^n \cdot w^{n+1} dx.
\end{aligned}
\end{equation}
Taking the inner product of \eqref{step2} with $u^{n+1}$ and integrating over $R^d$ yields
\begin{equation}\label{Modu2}
\begin{aligned}
\Delta t\gamma \frac{C_{d,\alpha}}{2}&\left(\vertiii{ u^{n+1}}^2_\alpha-\vertiii{ u^{n}}^2_\alpha+\vertiii {u^{n+1}-u^n}^2_\alpha\right)\\
&+\frac{1}{2} \Vert u^{n+1}\Vert^2_{L^2(\Omega)}+\frac{1}{2} \Vert w^{n+1}-u^{n+1}\Vert^2_{L^2(\Omega)}= \frac{1}{2} \Vert w^{n+1}\Vert^2_{L^2(\Omega)}.
\end{aligned}
\end{equation}
Similarly, taking inner product of \eqref{step2} with $(u^{n+1}+w^{n+1})/2$ and integrating over $R^d$ gives
\begin{equation} \label{Modu3}
\begin{aligned}
\Delta t\gamma \frac{C_{d,\alpha}}{4}&\left(\vertiii{ u^{n+1}}^2_\alpha-\vertiii{ u^{n}}^2_\alpha+\vertiii {u^{n+1}-u^n}^2_\alpha\right)\\
&+\Delta t\gamma \int_{\Omega} (-\Delta)^\alpha (u^{n+1}-u^n)\cdot w^{n+1}dx
+\frac{1}{2} \Vert u^{n+1}\Vert^2_{L^2(\Omega)}= \frac{1}{2} \Vert w^{n+1}\Vert^2_{L^2(\Omega)}.
\end{aligned}
\end{equation}
Adding \eqref{Modu1}, \eqref{Modu2}, and \eqref{Modu3} gives
\begin{equation}\label{Modu4}
\begin{aligned}
\Vert u^{n+1}\Vert_{L^2(\Omega)}^2-&\Vert u^n \Vert_{L^2(\Omega)}^2 + \Vert w^{n+1}- u^n\Vert_{L^2(\Omega)}^2
+2\Delta t \nu \Vert \nabla w^{n+1}\Vert_{L^2(\Omega)} ^2 \\
&+\Delta t\gamma \frac{3C_{d,\alpha}}{4}\left(\vertiii{ u^{n+1}}^2_\alpha-\vertiii{ u^{n}}^2_\alpha+\vertiii {u^{n+1}-u^n}^2_\alpha\right)\\
&+\Delta t\gamma \int_{\Omega} (-\Delta)^\alpha (u^{n+1}+u^n)\cdot w^{n+1} dx
 +\frac{1}{2} \Vert w^{n+1}-u^{n+1}\Vert^2_{L^2(\Omega)}\\
 &=2 \Delta t \int_{\Omega}f^{n+1} w^{n+1}dx.
\end{aligned}
\end{equation}
Now taking inner product of \eqref{step2} with $\Delta t\gamma(-\Delta)^\alpha (u^{n+1}+u^n)$ and integrating over $\Omega$, we obtain
$$
\begin{aligned}
2\Delta  t^2\gamma^2&\left(\Vert (-\Delta)^\alpha u^{n+1}\Vert_{L^2(\Omega)}^2-\Vert (-\Delta)^\alpha u^n\Vert_{L^2(\Omega)}^2\right)\\
&+\Delta t\gamma \frac{C_{d,\alpha}}{4}\left(\vertiii{ u^{n+1}}^2_\alpha-\vertiii{ u^{n}}^2_\alpha+\vertiii {u^{n+1}+u^n}^2_\alpha\right)\\
&=\Delta t\gamma \int_{\Omega} (-\Delta)^\alpha (u^{n+1}+u^n) \cdot w^{n+1}dx.
\end{aligned}
$$
Substituting the above equation into \eqref{Modu4} and applying Young's inequality to the right-hand side, we have
\begin{equation}\label{Modu5}
\begin{aligned}
\Vert u^{n+1}\Vert_{L^2(\Omega)}^2-&\Vert u^n \Vert_{L^2(\Omega)}^2 + \Vert w^{n+1}- u^n\Vert_{L^2(\Omega)}^2
+\Delta t \nu \Vert \nabla w^{n+1}\Vert_{L^2(\Omega)} ^2 \\
&+\Delta t\gamma C_{d,\alpha}\left(\vertiii{ u^{n+1}}^2_\alpha-\vertiii{ u^{n}}^2_\alpha\right)+\Delta t\gamma \frac{3C_{d,\alpha}}{4}\vertiii {u^{n+1}-u^n}^2_\alpha\\
&
+2\Delta  t^2\gamma^2\left(\Vert (-\Delta)^\alpha u^{n+1} \Vert_{L^2(\Omega)}^2-\Vert (-\Delta)^\alpha u^n\Vert_{L^2(\Omega)}^2\right) \\
&+\Delta t\gamma \frac{C_{d,\alpha}}{4}\vertiii {u^{n+1}+u^n}^2_\alpha+\frac{1}{2} \Vert w^{n+1}-u^{n+1}\Vert^2_{L^2(\Omega)}\\
&\leq \frac{\Delta t}{\nu}\Vert f^{n+1} \Vert_{-1}^2.
\end{aligned}
\end{equation}
Summing \eqref{Modu5} from $n=0$ to $N-1$ results in \eqref{thmineq:ModularFracBE}.

}

\subsection{Proof of Theorem \ref{thm:conv+Modu}}\label{thm3.6proof}

{ \allowdisplaybreaks

The exact solution ${u}$ of the \eqref{eq:model} satisfies
\begin{equation}\label{eq:convtrue2}
\begin{aligned}
\frac{{u}(t_{n+1})-{u}(t_{n})}{\Delta t}& +({u}(t_{n+1})\cdot \nabla)
{u}(t_{n+1})-\nu\Delta {u}(t_{n+1}) +\gamma (-\Delta)^\alpha {u}(t_{n+1})\\
&+\nabla {p}(t_{n+1})={f}(t_{n+1}) + R({u}(t_{n+1})),
\end{aligned}
\end{equation}
where $R({u}(t_{n+1}))$ is defined as
$$
R({u}(t_{n+1}))=\frac{{u}(t_{n+1})-{u}(t_{n})}{\Delta t}-{u}_{t}
(t_{n+1})\text{ .}
$$
Subtracting (\ref{step1})  from (\ref{eq:convtrue2}) gives
\begin{equation}\label{err}
\begin{aligned}
\frac{\bar{e}^{n+1}-e^n}{\Delta t}+&({u}(t_{n+1})\cdot\nabla){u}(t_{n+1})
-(u^{n}\cdot\nabla)w^{n+1}
\\
& -\nu \Delta \bar{e}^{n+1}+\gamma (-\Delta)^{\alpha} e^n+\gamma (-\Delta)^{\alpha} (u(t_{n+1})-u(t_n))
\\
&\qquad+\nabla {p}(t_{n+1})-\nabla p^{n+1}=  R({u}(t_{n+1})) \quad \text{ in } \Omega. 
\end{aligned}
\end{equation}
Taking inner product of \eqref{err} with $2\Delta t\bar{e}^{n+1}$,  integrating over $R^d$, and using the polarization identity gives
\begin{equation}\label{err11}
\begin{aligned}
\Vert \bar{e}^{n+1}\Vert^2_{L^2(\Omega)}-&\Vert e^n\Vert^2_{L^2(\Omega)}+\Vert \bar{e}^{n+1}-e^n\Vert^2_{L^2(\Omega)}+2\Delta t\nu \Vert \nabla \bar{e}^{n+1}\Vert^2_{L^2(\Omega)}
\\
&+2\Delta t\int_{\Omega}(\nabla {p}(t_{n+1})-\nabla p^{n+1})\cdot \bar{e}^{n+1}dx +2\Delta t\gamma \int_{\Omega } (-\Delta)^{\alpha} e^n\cdot \bar{e}^{n+1}dx\\
 &=-2\Delta t {b_\Omega}({u}(t_{n+1}), {u}(t_{n+1}),\bar{e}^{n+1})+2\Delta t {b_\Omega}(u^{n},w^{n+1},\bar{e}^{n+1})\\
 &\quad-2\Delta t\gamma \int_{\Omega}(-\Delta)^{\alpha} (u(t_{n+1})-u(t_n)) \cdot \bar{e}^{n+1}dx\\
 &\quad+ 2 \Delta t\int_{\Omega}R({u}(t_{n+1}))\cdot \bar{e}^{n+1}dx.
\end{aligned}
\end{equation}
Because $\nabla \cdot u(t_{n+1})=0$ and $\nabla\cdot w^{n+1}=0$, we have $\nabla \cdot \bar{e}^{n+1}=0$ and thus
$$
\begin{aligned}
2\Delta t\int_{\Omega}(\nabla {p}(t_{n+1})-\nabla p^{n+1})\cdot \bar{e}^{n+1}dx=2\Delta t\int_{\Omega}( {p}(t_{n+1})-p^{n+1})\cdot (\nabla\cdot\bar{e}^{n+1})dx=0.
\end{aligned}
$$
\eqref{step2} can be rewritten as
\begin{equation}\label{err:step3}
\begin{aligned}
2\Delta t\gamma(-\Delta )^{\alpha} (e^{n+1}-e^n)-&2\Delta t\gamma(-\Delta )^{\alpha} ({u}(t_{n+1})-{u}(t_{n}))
+ e^{n+1}-\bar{e}^{n+1}  =0 \,\,\, \text{in } \Omega.
\end{aligned}
\end{equation}
Taking the inner product of \eqref{err:step3} with $e^{n+1}$ and integrating over $R^d$ gives
\begin{equation}\label{err22}
\begin{aligned}
\Delta t\gamma \frac{C_{d,\alpha}}{2}&\left(\vertiii{e^{n+1}}_{\alpha}^2-\vertiii{e^{n}}_{\alpha}^2+\vertiii{e^{n+1}-e^n}_{\alpha}^2\right)\\
&-2\Delta t\gamma \int_{\Omega}(-\Delta)^{\alpha} ({u}(t_{n+1})-{u}(t_{n}))\cdot e^{n+1}dx\\
&+\frac{1}{2}\Vert e^{n+1}\Vert^2_{L^2(\Omega)} +\frac{1}{2}\Vert e^{n+1}-\bar{e}^{n+1}\Vert^2_{L^2(\Omega)}
=\frac{1}{2}\Vert \bar{e}^{n+1}\Vert^2_{L^2(\Omega)}. 
\end{aligned}
\end{equation}
Taking the inner product of \eqref{err:step3} with $(e^{n+1}+\bar{e}^{n+1})/2$ and integrating over $R^d$, we obtain
\begin{equation}\label{err33}
\begin{aligned}
\Delta t\gamma \frac{C_{d,\alpha}}{4}&\left(\vertiii{e^{n+1}}_{\alpha}^2
-\vertiii{e^{n}}_{\alpha}^2+\vertiii{e^{n+1}-e^n}_{\alpha}^2\right)\\
&+\Delta t\gamma \int_{\Omega}(-\Delta)^{\alpha} (e^{n+1}-e^n)\cdot \bar{e}^{n+1}dx\\
&-\Delta t\gamma \int_{\Omega}(-\Delta)^{\alpha} ({u}(t_{n+1})-{u}(t_{n}))\cdot (e^{n+1}+\bar{e}^{n+1}) dx\\
&+\frac{1}{2}\Vert e^{n+1}\Vert^2_{L^2(\Omega)} =\frac{1}{2}\Vert \bar{e}^{n+1}\Vert^2_{L^2(\Omega)}. 
\end{aligned}
\end{equation}
Adding \eqref{err11}, \eqref{err22}, and \eqref{err33} gives
\begin{equation}\label{err44}
\begin{aligned}
\Vert e^{n+1}\Vert^2_{L^2(\Omega)}-&\Vert e^n\Vert^2_{L^2(\Omega)}+\Vert \bar{e}^{n+1}-e^n\Vert^2_{L^2(\Omega)}+\frac{1}{2}\Vert e^{n+1}-\bar{e}^{n+1}\Vert^2_{L^2(\Omega)}\\
&
+\Delta t\gamma \frac{3C_{d,\alpha}}{4}\left(\vertiii{e^{n+1}}_{\alpha}^2
-\vertiii{e^{n}}_{\alpha}^2+\vertiii{e^{n+1}-e^n}_{\alpha}^2\right)\\
& +\Delta t\gamma \int_{\Omega}(-\Delta)^{\alpha} (e^{n+1}+e^n)\cdot\bar{e}^{n+1}+2\Delta t\nu \Vert \nabla \bar{e}^{n+1}\Vert^2_{L^2(\Omega)}\\
& -\Delta t\gamma \int_{\Omega}(-\Delta)^{\alpha} ({u}(t_{n+1})-{u}(t_{n})) (3e^{n+1}-\bar{e}^{n+1}) dx\\
& =-2\Delta t {b_\Omega}({u}(t_{n+1}), {u}(t_{n+1}),\bar{e}^{n+1})+2\Delta t {b_\Omega}({u}^n,w^{n+1},\bar{e}^{n+1})\\
& \quad+ 2\Delta t\int_{\Omega}R({u}(t_{n+1}))\cdot \bar{e}^{n+1}dx.
\end{aligned}
\end{equation}
Taking inner product of  \eqref{err:step3} with $\Delta t\gamma(-\Delta)^\alpha (e^{n+1}+e^n)$ and integrating over $\Omega$ gives
\begin{equation}\label{err55}
\begin{aligned}
2\Delta  t^2\gamma^2&\left(\Vert (-\Delta)^\alpha e^{n+1}\Vert_{L^2(\Omega)}^2-\Vert (-\Delta)^\alpha e^n\Vert_{L^2(\Omega)}^2\right)\\
&-2\Delta t^2\gamma^2 \int_{\Omega}(-\Delta)^\alpha ({u}(t_{n+1})-{u}(t_{n})) \cdot(-\Delta)^\alpha (e^{n+1}+e^n)dx\\
&+\Delta t\gamma \frac{C_{d,\alpha}}{4}\left(\vertiii{ e^{n+1}}^2_{\alpha}-\vertiii{ e^{n}}^2_{\alpha}+\vertiii {e^{n+1}+e^n}^2_{\alpha}\right)\\
&=\Delta t\gamma \int_{\Omega  } (-\Delta)^{\alpha}(e^{n+1}+e^n)\cdot \bar{e}^{n+1}dx.
\end{aligned}
\end{equation}
Adding \eqref{err44} and \eqref{err55} gives
\begin{equation}\label{err66}
\begin{aligned}
\Vert e^{n+1}\Vert^2_{L^2(\Omega)}-&\Vert e^n\Vert^2_{L^2(\Omega)}+\Vert \bar{e}^{n+1}-e^n\Vert^2_{L^2(\Omega)}+\frac{1}{2}\Vert e^{n+1}
-\bar{e}^{n+1}\Vert^2_{L^2(\Omega)}\\
&+2\Delta t\nu \Vert \nabla \bar{e}^{n+1}\Vert^2_{L^2(\Omega)}
+\Delta t\gamma C_{d,\alpha}\left(\vertiii{e^{n+1}}_\alpha^2
-\vertiii{e^{n}}_\alpha^2\right)\\
&+\Delta t\gamma \frac{3C_{d,\alpha}}{4}\vertiii{e^{n+1}-e^n}_\alpha^2
+\Delta t\gamma \frac{C_{d,\alpha}}{4}\vertiii {e^{n+1}+e^n}^2_\alpha\\
&+2\Delta  t^2\gamma^2\left(\Vert (-\Delta)^\alpha e^{n+1}\Vert_{L^2(\Omega)}^2-\Vert (-\Delta)^\alpha e^n\Vert_{L^2(\Omega)}^2\right)\\
& =-2\Delta t {b_\Omega}({u}(t_{n+1}), {u}(t_{n+1}),\bar{e}^{n+1})+2\Delta t {b_\Omega}(u^{n},w^{n+1},\bar{e}^{n+1})\\
&+2\Delta t^2\gamma^2 \int_{\Omega}(-\Delta)^\alpha ({u}(t_{n+1})-{u}(t_{n})) \cdot(-\Delta)^\alpha (e^{n+1}+e^n)dx\\
&  +\Delta t\gamma \int_{\Omega}(-\Delta)^\alpha ({u}(t_{n+1})-{u}(t_{n}))\cdot (3e^{n+1}-\bar{e}^{n+1}) dx\\
&+ 2\Delta t\int_{\Omega}R({u}(t_{n+1}))\cdot\bar{e}^{n+1}dx.
\end{aligned}
\end{equation}
\noindent The nonlinear terms can be rewritten as 
$$
\begin{aligned}
 {b_\Omega}(u^{n}&,w^{n+1},\bar{e}^{n+1})
-{b_\Omega}({u}(t_{n+1}), {u}(t_{n+1}),\bar{e}^{n+1})\\
&={b_\Omega}(e^{n}, \bar{e}^{n+1},\bar{e}^{n+1})-{b_\Omega}(e^{n}
,{u}(t_{n+1}),\bar{e}^{n+1})\\
&\quad-{b_\Omega}({u}(t_{n}),\bar{e}^{n+1},\bar{e}^{n+1})-{b_\Omega}(({u}(t_{n+1})-{u}(t_{n})),{u}(t_{n+1}),\bar{e}^{n+1})\\
&=-{b_\Omega}(e^{n}
,{u}(t_{n+1}),\bar{e}^{n+1})-{b_\Omega}(({u}(t_{n+1})-{u}(t_{n})),{u}(t_{n+1}),\bar{e}^{n+1}).
\end{aligned}
$$
Using Young's inequality and ${u}\in L^{\infty}(0,T;H^{2}(\Omega))$, we have
$$
\begin{aligned}
2\Delta t\vert {b_\Omega}(&e^{n},{u}(t_{n+1}),\bar{e}^{n+1}) \vert\\
&\leq C\Delta t\|
e^{n}\|_{L^2(\Omega)}\|{u}(t_{n+1})\|_{H^2(\Omega)}\|\nabla \bar{e}^{n+1}\|_{L^2(\Omega)}\\
&\leq \frac{\nu}{16}\Delta t\|\nabla \bar{e}^{n+1}\|^{2}_{L^2(\Omega)}+C\nu^{-1}\Delta t\|e^{n}\|^{2}_{L^2(\Omega)}
\end{aligned}
$$
and
$$
\begin{aligned}
2\Delta t \vert {b_\Omega}(&({u}(t_{n+1})-{u}(t_{n})),{u}(t_{n+1}),\bar{e}^{n+1})\vert\\
&\leq C\Delta t\|  ({u}(t_{n+1})-{u}(t_{n}))\|_{L^2(\Omega)}\Vert  {u}(t_{n+1})\Vert_{H^2(\Omega)} \Vert\nabla \bar{e}^{n+1}\Vert_{L^2(\Omega)}\\
&\leq \frac{\nu}{16}\Delta t\Vert \nabla \bar{e}^{n+1}\Vert^2_{L^2(\Omega)}+C \nu^{-1}\Delta t\Vert  ({u}(t_{n+1})
-{u}(t_{n}))\Vert^2_{L^2(\Omega)}\\
&\leq \frac{\nu}{16}\Delta t\Vert \nabla \bar{e}^{n+1}\Vert^2_{L^2(\Omega)}+C \nu^{-1}\Delta t ^2\int_{t_n}^{t_{n+1}}\Vert {u}_t\Vert^2_{L^2(\Omega)}dt.
\nonumber
\end{aligned}
$$
Next, 
$$
\begin{aligned}
2\Delta t\int_{\Omega}&
R({u}(t_{n+1}))\cdot\bar{e}^{n+1}dx\\
&=2\Delta t\int_{\Omega}\left(\frac{{u}(t_{n+1})
-{u}(t_{n})}{\Delta
t}-{u}_{t}(t_{n+1})\right)\cdot \bar{e}^{n+1}dx\\
&\leq C\Delta t\|\frac{{u}(t_{n+1})-{u}(t_{n})}{\Delta t}-{u}_{t}(t_{n+1})\| _{L^2(\Omega)}\|\nabla
\bar{e}^{n+1}\|_{L^2(\Omega)}\\
&\leq\frac{\nu}{16}\Delta t\|\nabla \bar{e}^{n+1}\|^{2}_{L^2(\Omega)}+\frac{C}{\nu}\Delta t\|\frac
{{u}(t_{n+1})-{u}(t_{n})}{\Delta t}-{u}_{t}(t_{n+1})\|^{2}_{L^2(\Omega)}\\
&\leq\frac{\nu}{16}\Delta t\|\nabla \bar{e}^{n+1}\|^{2}_{L^2(\Omega)}+\frac{C\Delta t^2}{\nu}
\int_{t_{n}}^{t_{n+1}}\|{u}_{tt}\|^{2}_{L^2(\Omega)} dt .
\end{aligned}
$$
The last two terms of the right-hand side of \eqref{err66} are bounded as follows:
$$
\begin{aligned}
 2\gamma^2\Delta t^2 \int_{\Omega}&(-\Delta)^\alpha(u(t_{n+1})-u(t_n)) \cdot(-\Delta)^\alpha (e^{n+1}+e^n)dx\\
&\leq 2\gamma^2\Delta t^2 \frac{C_{d,\alpha}}{2}\vertiii{(-\Delta )^\alpha (u(t_{n+1})
-u(t_n))}_\alpha\vertiii{e^{n+1}+e^n}_\alpha\\
&\leq 4\gamma^3\Delta t^3 \frac{C_{d,\alpha}}{2}\vertiii{(-\Delta )^\alpha (u(t_{n+1})-u(t_n))}_\alpha^2+\Delta t \gamma \frac{C_{d,\alpha}}{4}\vertiii{e^{n+1}+e^n}_\alpha^2\\
&\leq 4\gamma^4\Delta t^4 \frac{C_{d,\alpha}}{2}\int_{t_n}^{t_{n+1}}\vertiii{(-\Delta )^\alpha u_t}_\alpha^2dt+\Delta t \gamma \frac{C_{d,\alpha}}{4}\vertiii{e^{n+1}+e^n}_\alpha^2
\end{aligned}
$$
and
$$
\begin{aligned}
\gamma\Delta t \int_{\Omega}&(-\Delta)^\alpha ({u}(t_{n+1})-{u}(t_{n}))\cdot (3e^{n+1}-\bar{e}^{n+1}) dx\\
&=3\gamma\Delta t \int_{\Omega}(-\Delta)^\alpha ({u}(t_{n+1})-{u}(t_{n}))\cdot (e^{n+1}-\bar{e}^{n+1}) dx\\
&\quad+2\gamma\Delta t \int_{\Omega}(-\Delta)^\alpha ({u}(t_{n+1})-{u}(t_{n}))\cdot \bar{e}^{n+1} dx\\
&\leq 3\gamma\Delta t \Vert (-\Delta)^\alpha ({u}(t_{n+1})-{u}(t_{n})) \Vert_{L^2(\Omega)}\Vert e^{n+1} -\bar{e}^{n+1}\Vert_{L^2(\Omega)}\\
&\quad+2\gamma\Delta t \Vert (-\Delta)^\alpha ({u}(t_{n+1})-{u}(t_{n}))\Vert_{L^2(\Omega)}\Vert \bar{e}^{n+1}\Vert_{L^2(\Omega)}\\
&\leq \frac{9}{2}\gamma^2\Delta t^2 \Vert (-\Delta)^\alpha ({u}(t_{n+1})-{u}(t_{n})) \Vert_{L^2(\Omega)}^2+\frac{1}{2}\Vert e^{n+1} -\bar{e}^{n+1}\Vert_{L^2(\Omega)}^2\\
&\quad+C\gamma^2\Delta t\nu^{-1}\Vert (-\Delta)^\alpha ({u}(t_{n+1})-{u}(t_{n}))\Vert_{L^2(\Omega)}^2+\Delta t \nu \Vert \nabla \bar{e}^{n+1}\Vert^2_{L^2(\Omega)}\\
&\leq \frac{9}{2}\gamma^2\Delta t^3\int_{t_n}^{t_{n+1}} \Vert (-\Delta)^\alpha {u}_t \Vert_{L^2(\Omega)}^2dt+\frac{1}{2}\Vert e^{n+1} -\bar{e}^{n+1}\Vert_{L^2(\Omega)}^2\\
&\quad+C\gamma^2\Delta t^2\nu^{-1}\int_{t_n}^{t_{n+1}}\Vert (-\Delta)^\alpha {u}_t\Vert_{L^2(\Omega)}^2dt+\Delta t \nu \Vert \nabla \bar{e}^{n+1}\Vert^2_{L^2(\Omega)}
\end{aligned}
$$
Combining all the estimates above, we now have
\begin{equation}\label{err77}
\begin{aligned}
\Vert e^{n+1}\Vert^2_{L^2(\Omega)}-&\Vert e^n\Vert^2_{L^2(\Omega)}+\Vert \bar{e}^{n+1}-e^n\Vert^2_{L^2(\Omega)}
+\frac{1}{2}\Vert e^{n+1}-\bar{e}^{n+1}\Vert^2_{L^2(\Omega)}\\
&+\Delta t\nu \Vert \nabla \bar{e}^{n+1}\Vert^2_{L^2(\Omega)}
+\Delta t\gamma C_{d,\alpha}\left(\vertiii{e^{n+1}}_\alpha^2-\vertiii{e^{n}}_\alpha^2\right)\\
&+\Delta t\gamma \frac{3C_{d,\alpha}}{4}\vertiii{e^{n+1}-e^n}_\alpha^2
+\Delta t\gamma \frac{C_{d,\alpha}}{4}\vertiii {e^{n+1}+e^n}^2_\alpha\\
&+2\Delta  t^2\gamma^2\left(\Vert (-\Delta)^\alpha e^{n+1}\Vert_{L^2(\Omega)}^2-\Vert (-\Delta)^\alpha e^n\Vert_{L^2(\Omega)}^2\right)\\
& \leq C\nu^{-1}\Delta t\|e^{n}\|^{2}_{L^2(\Omega)}+C \nu^{-1}\Delta t ^2\int_{t_n}^{t_{n+1}}\Vert {u}_t\Vert^2_{L^2(\Omega)}dt\\
 &+\frac{C\Delta t^2}{\nu}
\int_{t_{n}}^{t_{n+1}}\|{u}_{tt}\|^{2}_{L^2(\Omega)} dt
+4\Delta t^4\gamma^3 \frac{C_{d,\alpha}}{2}\int_{t_n}^{t_{n+1}}\vertiii{(-\Delta )^\alpha {u}_t}_\alpha^2dt\\
&+\frac{9}{2}\Delta t^3\gamma^2\int_{t_n}^{t_{n+1}} \Vert (-\Delta)^\alpha {u}_t \Vert_{L^2(\Omega)}^2dt\\
& + C\Delta t^2\gamma^2\nu^{-1}\int_{t_n}^{t_{n+1}}\Vert (-\Delta)^\alpha {u}_t\Vert_{L^2(\Omega)}^2dt.
\end{aligned}
\end{equation}
Taking the sum of (\ref{err77}) from $n=0$ to
$n=N-1$ gives
$$
\begin{aligned}
\Vert e^{N}&\Vert^2_{L^2(\Omega)}+\sum_{n=0}^{N-1}\Vert \bar{e}^{n+1}-e^n\Vert^2_{L^2(\Omega)}+\sum_{n=0}^{N-1}\Vert e^{n+1}-\bar{e}^n\Vert^2_{L^2(\Omega)}\\
&+\Delta t\sum_{n=0}^{N-1}\nu \Vert \nabla \bar{e}^{n+1}\Vert^2_{L^2(\Omega)}
+\Delta t\gamma C_{d,\alpha}\vertiii{e^N}_\alpha^2+\Delta t\sum_{n=0}^{N-1}\gamma \frac{3C_{d,\alpha}}{4}\vertiii{e^{n+1}-e^n}_\alpha^2\\
&+\Delta t\sum_{n=0}^{N-1}\gamma \frac{C_{d,\alpha}}{4}\vertiii{e^{n+1}+e^n}_\alpha^2
+2\Delta  t^2\gamma^2\Vert (-\Delta)^\alpha e^{N}\Vert_{L^2(\Omega)}^2\\
 &\leq \Vert e^0\Vert^2_{L^2(\Omega)}+\Delta t\gamma
 C_{d,\alpha}\vertiii{e^0}_\alpha^2+2\Delta  t^2\gamma^2\Vert (-\Delta)^\alpha e^{0}\Vert_{L^2(\Omega)}^2\\
 &\quad+C\nu^{-1}\Delta t\sum_{n=0}^{N-1}\|e^{n}\|^{2}_{L^2(\Omega)}+C \nu^{-1}\Delta t ^2\int_{0}^{T}\Vert {u}_t\Vert^2_{L^2(\Omega)}dt\\
 &\quad+\frac{C\Delta t^2}{\nu}
\int_{0}^{T}\|{u}_{tt}\|^{2}_{L^2(\Omega)} dt+4\Delta t^4\gamma^3 \frac{C_{d,\alpha}}{2}\int_{0}^{T}\vertiii{(-\Delta )^\alpha {u}_t}_\alpha^2dt\\
&\quad+\frac{9}{2}\Delta t^3\gamma^2\int_{0}^{T} \Vert (-\Delta)^\alpha {u}_t \Vert_{L^2(\Omega)}^2dt\\
& \quad+ C\Delta t^2\gamma^2\nu^{-1}\int_{0}^{T}\Vert (-\Delta)^\alpha {u}_t\Vert_{L^2(\Omega)}^2dt.
\end{aligned}
$$
As ${u}(t_0)=u^0=w^0$, we have $e^0=0$. Applying the discrete Gronwall
inequality \cite[p. 176]{GR79}, we have
$$
\begin{aligned}
\Vert e^{N}&\Vert^2_{L^2(\Omega)}+\sum_{n=0}^{N-1}\Vert \bar{e}^{n+1}-e^n\Vert^2_{L^2(\Omega)}+\sum_{n=0}^{N-1}\Vert e^{n+1}-\bar{e}^n\Vert^2_{L^2(\Omega)}\\
&+\Delta t\sum_{n=0}^{N-1}\nu \Vert \nabla \bar{e}^{n+1}\Vert^2_{L^2(\Omega)}+\Delta t\gamma C_{d,\alpha}\vertiii{e^N}_\alpha^2
+\Delta t\sum_{n=0}^{N-1}\gamma \frac{3C_{d,\alpha}}{4}\vertiii{e^{n+1}-e^n}_\alpha^2\\
&+\Delta t\sum_{n=0}^{N-1}\gamma \frac{C_{d,\alpha}}{4}\vertiii{e^{n+1}+e^n}_\alpha^2+2\Delta  t^2\gamma^2\Vert (-\Delta)^\alpha e^{N}\Vert_{L^2(\Omega)}^2\\
& \leq exp\left(\frac{CT}{\nu}\right)\bigg\{C \nu^{-1}\Delta t ^2\int_{0}^{T}\Vert {u}_t\Vert^2_{L^2(\Omega)}dt
 +\frac{C\Delta t^2}{\nu}
\int_{0}^{T}\|{u}_{tt}\|^{2}_{L^2(\Omega)} dt\\
&\quad+4\Delta t^4\gamma^3 \frac{C_{d,\alpha}}{2}\int_{0}^{T}\vertiii{(-\Delta )^\alpha {u}_t}_\alpha^2dt
+\frac{9}{2}\Delta t^3\gamma^2\int_{0}^{T} \Vert (-\Delta)^\alpha {u}_t \Vert_{L^2(\Omega)}^2dt\\
& \quad+ C\Delta t^2\gamma^2\nu^{-1}\int_{0}^{T}\Vert (-\Delta)^\alpha {u}_t\Vert_{L^2(\Omega)}^2dt\bigg\}.
\end{aligned}
$$
Because ${u}\in  H^{2}(0,T;L^{2}(\Omega))$ and $(-\Delta)^\alpha{u}\in  H^{1}(0,T;H^{\alpha}(R^d))$, assuming $\Delta t\leq 1$, after absorbing constants, we have 
\begin{equation}\label{err00}
\begin{aligned}
\Vert e^{N}\Vert^2_{L^2(\Omega)}+&\sum_{n=1}^{N-1}\Vert \bar{e}^{n+1}-e^n\Vert^2_{L^2(\Omega)}+\sum_{n=1}^{N-1}\Vert e^{n+1}-\bar{e}^n\Vert^2_{L^2(\Omega)}\\
&+\Delta t\sum_{n=1}^{N-1}\nu \Vert \nabla \bar{e}^{n+1}\Vert^2_{L^2(\Omega)}+\Delta t\gamma C_{d,\alpha}\vertiii{e^N}_\alpha^2\\
&+\Delta t\sum_{n=1}^{N-1}\gamma \frac{3C_{d,\alpha}}{4}\vertiii{e^{n+1}-e^n}_\alpha^2+\Delta t\sum_{n=1}^{N-1}\gamma \frac{C_{d,\alpha}}{4}\vertiii{e^{n+1}+e^n}_\alpha^2\\
&+2\Delta  t^2\gamma^2\Vert (-\Delta)^\alpha e^{N}\Vert_{L^2(\Omega)}^2\\
& \leq C \left(\Delta t^2+\Delta t^3+\Delta t^4\right)\leq C\Delta t^2
\end{aligned}
\end{equation}
which completes the proof.

}

\subsection{Proof of Theorem \ref{thm:trancate}}\label{thm4.1proof}

{ \allowdisplaybreaks

The true solution $(u, p)$ of \eqref{eq:model} satisfies
\begin{equation}\label{true}
\begin{aligned}
 \int_{\Omega }u_t v dx +& {b_\Omega} (u, u, v)+\nu \int_{\Omega} \nabla u : \nabla v dx \\
 &+\gamma \frac{C_{d,\alpha}}{2}\int_{R^d }\int_{R^d} \frac{u(x)-u(y)}{|x-y|^{d+2\alpha}} (v(x)-v(y))dydx= \int_{\Omega} f v dx \quad\forall\, v\in V
\end{aligned}
\end{equation}
\eqref{prob} 
is equivalent to
\begin{equation}\label{trancate}
\begin{aligned}
 \int_{\Omega }&\frac{\partial{u}_\lambda}{\partial t} v dx +{b_\Omega} ({u}_\lambda, {u}_\lambda, v)+\nu \int_{\Omega} \nabla {u}_\lambda : \nabla v dx \\
&\qquad\qquad+ \gamma \frac{C_{d,\alpha}}{2}\int_{\Omega \cup \Omega_\lambda }\int_{(\Omega\cup\Omega_\lambda)\cap\mathcal{B}_{\lambda}(x)} \frac{{u}_\lambda(x)-{u}_\lambda(y)}{|x-y|^{d+2\alpha}} (v(x)-v(y))dydx\\&\qquad= \int_{\Omega} f v dx \quad\forall\, v\in V
\end{aligned}
\end{equation}
Let $e=u-{u}_\lambda$. Subtracting \eqref{trancate} from \eqref{true} gives
\begin{equation}\label{trancate_error}
\left\{\begin{aligned}
& \int_{\Omega }e_t v dx +{b_\Omega} (u, u, v)-{b_\Omega} ({u}_\lambda, {u}_\lambda, v)+\nu \int_{\Omega} \nabla e : \nabla v dx\\
&\quad\quad +\gamma \frac{C_{d,\alpha}}{2}\int_{R^d }\int_{R^d} \frac{u(x)-u(y)}{|x-y|^{d+2\alpha}}(v(x)-v(y))dydx\\
& -\gamma \frac{C_{d,\alpha}}{2}\int_{\Omega \cup \Omega_\lambda }\int_{(\Omega\cup\Omega_\lambda)\cap\mathcal{B}_{\lambda}(x)}  \frac{{u}_\lambda(x)-{u}_\lambda(y)}{|x-y|^{d+2\alpha}}(v(x)-v(y))dydx= 0 \quad\forall\, v\in V
\end{aligned}\right.
\end{equation}
The nonlinear terms can be rewritten as
\begin{align}
&{b_\Omega}(u,u,v)-{b_\Omega}({u}_\lambda,{u}_\lambda,v)\\
&={b_\Omega}(u,u,v)-{b_\Omega}({u}_\lambda,u,v)+{b_\Omega}({u}_\lambda,u,v)-{b_\Omega}({u}_\lambda,{u}_\lambda,v)\nonumber\\
&={b_\Omega}(e,u,v)+{b_\Omega}({u}_\lambda,e,v)\nonumber
\end{align}
Let $\Omega_c=R^d \setminus (\Omega\cup \Omega_{\lambda})$. We rewrite the nonlocal terms as follows
\begin{align}
&\gamma \frac{C_{d,\alpha}}{2}\int_{R^d }\int_{R^d} \frac{u(x)-u(y)}{|x-y|^{d+2\alpha}}(v(x)-v(y))dydx\\
& \qquad\qquad-\gamma \frac{C_{d,\alpha}}{2}\int_{\Omega \cup \Omega_\lambda }\int_{(\Omega\cup\Omega_\lambda)\cap\mathcal{B}_{\lambda}(x)}  \frac{{u}_\lambda(x)-{u}_\lambda(y)}{|x-y|^{d+2\alpha}}(v(x)-v(y))dydx\nonumber\\
&=\int_{\Omega\cup\Omega_{\lambda}}
\int_{\Omega_c}\frac{u(x)-u(y)}{|x-y|^{d+2\alpha}}(v(x)-v(y))dydx+\int_{\Omega_c}\int_{R^d}\frac{u(x)-u(y)}{|x-y|^{d+2\alpha}}(v(x)-v(y))dydx\nonumber\\
&\qquad+\int_{\Omega\cup\Omega_{\lambda}}
\int_{(\Omega\cup\Omega_{\lambda})\cap B_{\lambda}(x)}\frac{u(x)-u(y)}{|x-y|^{d+2\alpha}}(v(x)-v(y))dydx\nonumber\\
&\qquad+\int_{\Omega\cup\Omega_{\lambda}}
\int_{(\Omega\cup\Omega_{\lambda})\setminus B_{\lambda}(x)}\frac{u(x)-u(y)}{|x-y|^{d+2\alpha}}(v(x)-v(y))dydx\nonumber\\
&\qquad-\int_{\Omega\cup\Omega_{\lambda}}
\int_{(\Omega\cup\Omega_{\lambda})\cap B_{\lambda}(x)}\frac{{u}_\lambda(x)-{u}_\lambda(y)}{|x-y|^{d+2\alpha}}(v(x)-v(y))dydx\nonumber\\
&=\int_{\Omega\cup\Omega_{\lambda}}
\int_{(\Omega\cup\Omega_{\lambda})\cap B_{\lambda}(x)}\frac{e(x)-e(y)}{|x-y|^{d+2\alpha}}(v(x)-v(y))dydx\nonumber\\
&\quad+\int_{\Omega\cup\Omega_{\lambda}}
\int_{\Omega_c}\frac{u(x)-u(y)}{|x-y|^{d+2\alpha}}(v(x)-v(y))dydx+\int_{\Omega_c}\int_{R^d}\frac{u(x)-u(y)}{|x-y|^{d+2\alpha}}(v(x)-v(y))dydx\nonumber\\
&\qquad+\int_{\Omega\cup\Omega_{\lambda}}
\int_{(\Omega\cup\Omega_{\lambda})\setminus B_{\lambda}(x)}\frac{u(x)-u(y)}{|x-y|^{d+2\alpha}}(v(x)-v(y))dydx\nonumber\\
&=\int_{\Omega\cup\Omega_{\lambda}}
\int_{(\Omega\cup\Omega_{\lambda})\cap B_{\lambda}(x)}\frac{e(x)-e(y)}{|x-y|^{d+2\alpha}}(v(x)-v(y))dydx\nonumber\\
&\qquad+\int_{\Omega\cup\Omega_{\lambda}}
\int_{R^d\setminus B_{\lambda}(x)}\frac{u(x)-u(y)}{|x-y|^{d+2\alpha}}(v(x)-v(y))dydx\nonumber\\
&\qquad+\int_{\Omega_c}\int_{R^d}\frac{u(x)-u(y)}{|x-y|^{d+2\alpha}}(v(x)-v(y))dydx\nonumber\\
\end{align}
Note that $u\in H_{\Omega}^{\alpha}(R^d)\cap H^1_{\Omega}(R^d)\subset H^1_{\Omega}(\Omega\cup\Omega_{\lambda})$ and $u$ is solenoidal, so $u \in V$ and therefore $e\in V$. Set $v=e\in V$, and rearrange the terms, then we have
\begin{equation}\label{trancate_eqn}
\left\{\begin{aligned}
& \frac{1}{2} \frac{d}{dt}\Vert e \Vert^2_{L^2(\Omega)} +\nu \Vert\nabla e \Vert^2_{L^2(\Omega)}+\gamma \frac{C_{d,\alpha}}{2}\vertiii {e}_{\alpha, \Omega\cup\Omega_{\lambda}} \\
&\quad=-{b_\Omega} (e, u, e)
-\gamma \frac{C_{d,\alpha}}{2}\int_{\Omega_c}\int_{R^d}\frac{u(x)-u(y)}{|x-y|^{d+2\alpha}}(e(x)-e(y))dydx\\
&\qquad-\gamma \frac{C_{d,\alpha}}{2}\int_{\Omega\cup\Omega_{\lambda}}
\int_{R^d\setminus B_{\lambda}(x)}\frac{u(x)-u(y)}{|x-y|^{d+2\alpha}}(e(x)-e(y))dydx
\end{aligned}\right.
\end{equation}
By Young's inequality, the nonlinear term can be bounded as (taking $\epsilon=\gamma^{3/4}$)
\begin{align}
{b_\Omega}(e,u,e)&\leq \Vert e \Vert_{L^2(\Omega)}^{\frac{1}{2}}\Vert \nabla e\Vert_{L^2(\Omega)}^{\frac{3}{2}}\Vert \nabla u\Vert_{L^2(\Omega)}\\
&\leq \frac{3}{4}\left(\epsilon\Vert \nabla e\Vert_{L^2(\Omega)}^{\frac{3}{2}}\right)^{\frac{4}{3}} + \frac{1}{4} \left( \frac{1}{\epsilon}\Vert e \Vert_{L^2(\Omega)}^{\frac{1}{2}}\Vert \nabla u \Vert_{L^2(\Omega)} \right)^4 \nonumber\\
&\leq \frac{3}{4}\epsilon^\frac{4}{3}\Vert \nabla e\Vert_{L^2(\Omega)}^2 + \frac{1}{4\epsilon^4}\Vert e\Vert_{L^2(\Omega)}^2 \Vert \nabla u \Vert_{L^2(\Omega)}^4\nonumber\\
&\leq \frac{3}{4}\gamma\Vert \nabla e\Vert_{L^2(\Omega)}^2 + \frac{1}{4 \gamma^3}\Vert e \Vert_{L^2(\Omega)}^2 \Vert \nabla u \Vert_{L^2(\Omega)}^4\nonumber
\end{align}
For all $y\in \Omega$, we have $B_{\lambda} (y) \subset \Omega \cup \Omega_{\lambda}$, and thus $\Omega_c =R^d \setminus (\Omega\cup\Omega_{\lambda}) \subset R^d \setminus B_{\lambda} (y) $. So
\begin{align}
&\gamma \frac{C_{d,\alpha}}{2}\left|\int_{\Omega_c}\int_{R^d}\frac{u(x)-u(y)}{|x-y|^{d+2\alpha}}(e(x)-e(y))dydx\right |\\
&=\gamma \frac{C_{d,\alpha}}{2} 
\left|\int_{\Omega_c}\int_{R^d}\frac{u(x)e(x)-u(y)e(x)-u(x)e(y)}{|x-y|^{d+2\alpha}}dydx+\int_{\Omega_c}
\int_{R^d}\frac{u(y)e(y)}{|x-y|^{d+2\alpha}}dydx\right |\nonumber\\
&=\gamma \frac{C_{d,\alpha}}{2} 
\left|\int_{\Omega_c}
\int_{R^d}\frac{u(y)e(y)}{|x-y|^{d+2\alpha}}dydx\right |\nonumber\\
&=\gamma \frac{C_{d,\alpha}}{2} 
\left|\int_{\Omega_c}
\int_{\Omega}\frac{u(y)e(y)}{|x-y|^{d+2\alpha}}dydx\right |\nonumber\\
&=\gamma \frac{C_{d,\alpha}}{2} 
\left|\int_{\Omega}u(y)e(y)
\int_{\Omega_c}\frac{1}{|x-y|^{d+2\alpha}}dxdy\right |\nonumber\\
&\leq \gamma \frac{C_{d,\alpha}}{2} 
\left|\int_{\Omega} u(y)e(y)
\int_{R^d \setminus B_{\lambda}(y)} \frac{1}{|x-y|^{d+2\alpha}}dxdy\right |\nonumber\\
&=\gamma \frac{C_{d,\alpha}}{2} 
\left|\int_{\Omega} u(y)e(y)
\int_{\lambda}^{+ \infty} \frac{1}{
|\rho|^{1+2\alpha}}d\rho dy\right |\nonumber\\
&=\gamma \frac{C_{d,\alpha}}{2} \frac{1}{2s\lambda^{2s}}
\left|\int_{\Omega} u(y)e(y) dy\right |\nonumber\\
&\leq\gamma \frac{C_{d,\alpha}}{2} \frac{1}{2s\lambda^{2s}} \Vert u\Vert_{L^2(\Omega)}\Vert e\Vert_{L^2(\Omega)}\nonumber\\
&\leq C\gamma \frac{C_{d,\alpha}}{2} \frac{1}{2s\lambda^{2s}} \Vert u\Vert_{L^2(\Omega)}\Vert \nabla e\Vert_{L^2(\Omega)}\nonumber\\
&\leq C\frac{\gamma^2}{\nu} \left(\frac{C_{d,\alpha}}{s\lambda^{2s}} \right)^2\Vert u\Vert_{L^2(\Omega)}^2+\frac{1}{32}\nu\Vert \nabla e\Vert_{L^2(\Omega)}^2\nonumber
\end{align}
\begin{align}
&\gamma \frac{C_{d,\alpha}}{2}\left|\int_{\Omega\cup\Omega_{\lambda}}
\int_{R^d \setminus B_{\lambda}(x) }\frac{u(x)-u(y)}{|x-y|^{d+2\alpha}}(e(x)-e(y))dydx\right |\\
&=\gamma \frac{C_{d,\alpha}}{2}\Bigg |\int_{\Omega}
\int_{R^d \setminus B_{\lambda}(x) }\frac{u(x)-u(y)}{|x-y|^{d+2\alpha}}(e(x)-e(y))dydx \nonumber\\
&\qquad+ \int_{\Omega_{\lambda}}
\int_{R^d \setminus B_{\lambda}(x) }\frac{u(x)-u(y)}{|x-y|^{d+2\alpha}}(e(x)-e(y))dydx\Bigg |\nonumber\\
&=\gamma \frac{C_{d,\alpha}}{2}\Bigg |\int_{\Omega}
\int_{R^d \setminus B_{\lambda}(x) }\frac{u(x)-u(y)}{|x-y|^{d+2\alpha}}(e(x)-e(y))dydx+ \int_{\Omega_{\lambda}}
\int_{\Omega \setminus B_{\lambda}(x) }\frac{u(y)e(y)}{|x-y|^{d+2\alpha}}dydx\Bigg |\nonumber\\
&\leq \gamma \frac{C_{d,\alpha}}{2}\Bigg |\int_{\Omega}
\int_{R^d \setminus B_{\lambda}(x) }\frac{u(x)-u(y)}{|x-y|^{d+2\alpha}}(e(x)-e(y))dydx\Bigg | \nonumber\\
&\qquad+ \gamma \frac{C_{d,\alpha}}{2}\Bigg | \int_{\Omega_{\lambda}}
\int_{\Omega \setminus B_{\lambda}(x) }\frac{u(y)e(y)}{|x-y|^{d+2\alpha}}dydx\Bigg |\nonumber
\end{align}
Let $w_d= 1, \pi, \frac{4}{3}\pi$, for $d=1,2,3$, respectively. 
\begin{align}
&\gamma \frac{C_{d,\alpha}}{2} \left |\int_{\Omega_{\lambda}}
\int_{\Omega \setminus B_{\lambda}(x) }\frac{u(y)e(y)}{|x-y|^{d+2\alpha}}dydx\right|\\
&\leq \gamma \frac{C_{d,\alpha}}{2} \int_{\Omega_{\lambda}}
\int_{\Omega \setminus B_{\lambda}(x) }\frac{|u(y)e(y)|}{|x-y|^{d+2\alpha}}dydx\nonumber\\
&\leq \gamma \frac{C_{d,\alpha}}{2} \int_{\Omega_{\lambda}}
\int_{\Omega \setminus B_{\lambda}(x) }\frac{|u(y)e(y)|}{\lambda^{d+2\alpha}}dydx\nonumber\\
&\leq \gamma \frac{C_{d,\alpha}}{2} \int_{\Omega_{\lambda}}
\int_{\Omega  }\frac{|u(y)e(y)|}{\lambda^{d+2\alpha}}dydx\nonumber\\
&=\gamma \frac{C_{d,\alpha}}{2}\frac{1}{\lambda^{d+2\alpha}} \int_{\Omega_{\lambda}}
\int_{\Omega  }|u(y)e(y)|dydx\\
&= \gamma \frac{C_{d,\alpha}}{2}\frac{Vol(\Omega_{\lambda})}{\lambda^{d+2\alpha}} 
\int_{\Omega  }|u(y)e(y)|dy\nonumber\\
&\leq  \gamma \frac{C_{d,\alpha}}{2}\frac{Vol(B_{\lambda+I})-Vol(\Omega)}{\lambda^{d+2\alpha}} 
\int_{\Omega  }|u(y)e(y)|dy\nonumber\\
&=  \gamma \frac{C_{d,\alpha}}{2}\frac{w_d(\lambda+I)^d-Vol(\Omega)}{\lambda^{d+2\alpha}} 
\int_{\Omega  }|u(y)e(y)|dy\nonumber\\
&\leq  \gamma \frac{C_{d,\alpha}}{2}\frac{w_d(\lambda+I)^d}{\lambda^{d+2\alpha}} 
\Vert u \Vert_{L^2(\Omega)}\Vert e\Vert_{L^2(\Omega)}\nonumber\\
&\leq C \gamma \frac{C_{d,\alpha}}{2}\frac{w_d(\lambda+I)^d}{\lambda^{d+2\alpha}} 
\Vert u \Vert_{L^2(\Omega)}\Vert \nabla e\Vert_{L^2(\Omega)}\nonumber\\
&\leq C \frac{\gamma^2}{\nu} C_{d,\alpha}^2\left(\frac{w_d(\lambda+I)^d}{\lambda^{d+2\alpha}} \right)^2
\Vert u \Vert_{L^2(\Omega)}^2+\frac{1}{32}\nu\Vert \nabla e\Vert_{L^2(\Omega)}^2\nonumber
\end{align}
\begin{align}
& \gamma \frac{C_{d,\alpha}}{2}\Bigg |\int_{\Omega}
\int_{R^d \setminus B_{\lambda}(x) }\frac{u(x)-u(y)}{|x-y|^{d+2\alpha}}(e(x)-e(y))dydx\Bigg | \nonumber\\
&\leq \underbrace{\gamma \frac{C_{d,\alpha}}{2}\int_{\Omega}
\int_{R^d \setminus B_{\lambda}(x) }\frac{|u(x)e(x)|}{|x-y|^{d+2\alpha}}dydx}_{\mbox{Term1}}
+\underbrace{ \gamma \frac{C_{d,\alpha}}{2}\int_{\Omega}
\int_{R^d \setminus B_{\lambda}(x) }\frac{|u(x)e(y)|}{|x-y|^{d+2\alpha}}dydx}_{\mbox{Term2}} \nonumber\\
& +\underbrace{\gamma \frac{C_{d,\alpha}}{2}\int_{\Omega}
\int_{R^d \setminus B_{\lambda}(x) }\frac{|u(y)e(x)|}{|x-y|^{d+2\alpha}}dydx}_{\mbox{Term3}}
+\underbrace{\gamma \frac{C_{d,\alpha}}{2}\int_{\Omega}
\int_{R^d \setminus B_{\lambda}(x) }\frac{|u(y)e(y)|}{|x-y|^{d+2\alpha}}dydx}_{\mbox{Term4}}\nonumber
\end{align}
Now we bound each of the terms above.
\begin{align}
 Term1 &= \gamma \frac{C_{d,\alpha}}{2}\int_{\Omega}
\int_{R^d \setminus B_{\lambda}(x) }\frac{|u(x)e(x)|}{|x-y|^{d+2\alpha}}dydx\\
&= \gamma \frac{C_{d,\alpha}}{2}\int_{\Omega} |u(x)e(x)|
\int_{R^d \setminus B_{\lambda}(x) }\frac{1}{|x-y|^{d+2\alpha}}dydx\nonumber\\
&= \gamma \frac{C_{d,\alpha}}{2}\int_{\Omega} |u(x)e(x)|
\int_{\lambda }^{+\infty}\frac{1}{|\rho |^{1+2\alpha}}d\rho dx\nonumber\\
&= \gamma \frac{C_{d,\alpha}}{2}\frac{1}{2\alpha\lambda^{2\alpha}}\int_{\Omega} |u(x)e(x)| dx\nonumber\\
&\leq \gamma \frac{C_{d,\alpha}}{2}\frac{1}{2\alpha\lambda^{2\alpha}}\Vert u\Vert_{L^2(\Omega)}\Vert e \Vert_{L^2(\Omega)}\nonumber\\
&\leq C\gamma \frac{C_{d,\alpha}}{2}\frac{1}{2\alpha\lambda^{2\alpha}}\Vert u\Vert_{L^2(\Omega)}\Vert \nabla e \Vert_{L^2(\Omega)}\nonumber\\
&\leq C\frac{\gamma^2}{\nu} \left(\frac{C_{d,\alpha}}{\alpha\lambda^{2\alpha}}\right)^2\Vert u\Vert^2_{L^2(\Omega)}+\frac{1}{32}\nu\Vert \nabla e \Vert_{L^2(\Omega)}\nonumber
\end{align}
\begin{align}
 Term 2 &= \gamma \frac{C_{d,\alpha}}{2}\int_{\Omega}
\int_{R^d \setminus B_{\lambda}(x) }\frac{|u(x)e(y)|}{|x-y|^{d+2\alpha}}dydx\\
&= \gamma \frac{C_{d,\alpha}}{2}\int_{\Omega} |u(x)|
\int_{R^d \setminus B_{\lambda}(x) }\frac{|e(y)|}{|x-y|^{d+2\alpha}}dydx\nonumber\\
&= \gamma \frac{C_{d,\alpha}}{2}\int_{\Omega} |u(x)|
\int_{\Omega \setminus B_{\lambda}(x) }\frac{|e(y)|}{|x-y|^{d+2\alpha}}dydx\nonumber\\
&\leq \gamma \frac{C_{d,\alpha}}{2}\int_{\Omega} |u(x)|
\int_{\Omega \setminus B_{\lambda}(x) }\frac{|e(y)|}{\lambda^{d+2\alpha}}dydx\nonumber\\
&= \gamma \frac{C_{d,\alpha}}{2}\frac{1}{\lambda^{d+2\alpha}}\int_{\Omega} |u(x)|
\int_{\Omega \setminus B_{\lambda}(x) }|e(y)|dydx\nonumber\\
&\leq \gamma \frac{C_{d,\alpha}}{2}\frac{1}{\lambda^{d+2\alpha}}\int_{\Omega} |u(x)|
\int_{\Omega }|e(y)|dydx\nonumber\\
&= \gamma \frac{C_{d,\alpha}}{2}\frac{1}{\lambda^{d+2\alpha}}\Vert u\Vert_{L^1(\Omega)}
\int_{\Omega }\left(|e(y)|\cdot 1\right) dy\nonumber\\
&\leq \gamma \frac{C_{d,\alpha}}{2}\frac{1}{\lambda^{d+2\alpha}}\sqrt{Vol(\Omega)}\Vert u\Vert_{L^1(\Omega)}\Vert e\Vert_{L^2(\Omega)}\nonumber\\
&\leq C\gamma \frac{C_{d,\alpha}}{2}\frac{1}{\lambda^{d+2\alpha}}\sqrt{Vol(\Omega)}\Vert u\Vert_{L^1(\Omega)}\Vert \nabla e\Vert_{L^2(\Omega)}\nonumber\\
&\leq C\frac{\gamma^2}{\nu} 
\left(\frac{C_{d,\alpha}}{\lambda^{d+2\alpha}}\right)^2 Vol(\Omega)\Vert u\Vert_{L^1(\Omega)}^2+ \frac{1}{32}\nu\Vert \nabla e\Vert_{L^2(\Omega)}^2\nonumber
\end{align}

\begin{align}
 Term 3 &= \gamma \frac{C_{d,\alpha}}{2}\int_{\Omega}
\int_{R^d \setminus B_{\lambda}(x) }\frac{|u(y)e(x)|}{|x-y|^{d+2\alpha}}dydx\\
&= \gamma \frac{C_{d,\alpha}}{2}\int_{\Omega} |e(x)|
\int_{R^d \setminus B_{\lambda}(x) }\frac{|u(y)|}{|x-y|^{d+2\alpha}}dydx\nonumber\\
&= \gamma \frac{C_{d,\alpha}}{2}\int_{\Omega} |e(x)|
\int_{\Omega \setminus B_{\lambda}(x) }\frac{|u(y)|}{|x-y|^{d+2\alpha}}dydx\nonumber\\
&\leq \gamma \frac{C_{d,\alpha}}{2}\int_{\Omega} |e(x)|
\int_{\Omega \setminus B_{\lambda}(x) }\frac{|u(y)|}{\lambda^{d+2\alpha}}dydx\nonumber\\
&= \gamma \frac{C_{d,\alpha}}{2}\frac{1}{\lambda^{d+2\alpha}}\int_{\Omega} |e(x)|
\int_{\Omega \setminus B_{\lambda}(x) }|u(y)|dydx\nonumber\\
&\leq \gamma \frac{C_{d,\alpha}}{2}\frac{1}{\lambda^{d+2\alpha}}\int_{\Omega} |e(x)|
\int_{\Omega }|u(y)|dydx\nonumber\\
&= \gamma \frac{C_{d,\alpha}}{2}\frac{1}{\lambda^{d+2\alpha}}\int_{\Omega} |e(x)|dx
\int_{\Omega }|u(y)|dy\nonumber\\
&= \gamma \frac{C_{d,\alpha}}{2}\frac{1}{\lambda^{d+2\alpha}}\Vert u\Vert_{L^1(\Omega)}
\int_{\Omega }\left(|e(x)|\cdot 1\right) dx\nonumber\\
&\leq \gamma \frac{C_{d,\alpha}}{2}\frac{1}{\lambda^{d+2\alpha}}\sqrt{Vol(\Omega)}\Vert u\Vert_{L^1(\Omega)}\Vert e\Vert_{L^2(\Omega)}\nonumber\\
&\leq C\gamma \frac{C_{d,\alpha}}{2}\frac{1}{\lambda^{d+2\alpha}}\sqrt{Vol(\Omega)}\Vert u\Vert_{L^1(\Omega)}\Vert \nabla e\Vert_{L^2(\Omega)}\nonumber\\
&\leq C\frac{\gamma^2}{\nu} 
\left(\frac{C_{d,\alpha}}{\lambda^{d+2\alpha}}\right)^2 Vol(\Omega)\Vert u\Vert_{L^1(\Omega)}^2+ \frac{1}{32}\nu\Vert \nabla e\Vert_{L^2(\Omega)}^2\nonumber
\end{align}
\begin{align}
 Term 4 &= \gamma \frac{C_{d,\alpha}}{2}\int_{\Omega}
\int_{R^d \setminus B_{\lambda}(x) }\frac{|u(y)e(y)|}{|x-y|^{d+2\alpha}}dydx\\
 &= \gamma \frac{C_{d,\alpha}}{2}\int_{\Omega}
\int_{\Omega\setminus B_{\lambda}(x) }\frac{|u(y)e(y)|}{|x-y|^{d+2\alpha}}dydx\nonumber\\
&\leq \gamma \frac{C_{d,\alpha}}{2}\int_{\Omega} 
\int_{\Omega \setminus B_{\lambda}(x) }\frac{|u(y)e(y)|}{\lambda^{d+2\alpha}}dydx\nonumber\\
&= \gamma \frac{C_{d,\alpha}}{2}\frac{1}{\lambda^{d+2\alpha}}\int_{\Omega} 
\int_{\Omega \setminus B_{\lambda}(x) }|u(y)e(y)|dydx\nonumber\\
&\leq \gamma \frac{C_{d,\alpha}}{2}\frac{1}{\lambda^{d+2\alpha}}\int_{\Omega} 
\int_{\Omega }|u(y)e(y)|dydx\nonumber\\
&= \gamma \frac{C_{d,\alpha}}{2}\frac{Vol(\Omega)}{\lambda^{d+2\alpha}}
\int_{\Omega }|u(y)e(y)|dy\nonumber\\
&\leq \gamma \frac{C_{d,\alpha}}{2}\frac{Vol(\Omega)}{\lambda^{d+2\alpha}}
\Vert u\Vert_{L^2(\Omega)}\Vert e\Vert_{L^2(\Omega)}\nonumber\\
&\leq C\gamma \frac{C_{d,\alpha}}{2}\frac{Vol(\Omega)}{\lambda^{d+2\alpha}}
\Vert u\Vert_{L^2(\Omega)}\Vert\nabla e\Vert_{L^2(\Omega)}\nonumber\\
&\leq C\frac{\gamma^2}{\nu} C_{d,\alpha}^2\left(\frac{Vol(\Omega)}{\lambda^{d+2\alpha}}\right)^2
\Vert u\Vert_{L^2(\Omega)}^2+\frac{1}{32}\nu\Vert\nabla e\Vert_{L^2(\Omega)}^2\nonumber
\end{align}
After bounding all the terms on the right hand side of \eqref{trancate_eqn}, we now have
\begin{align}
&\frac{1}{2} \frac{d}{dt}\Vert e \Vert^2_{L^2(\Omega)} +\frac{1}{16}\nu \Vert\nabla e \Vert^2_{L^2(\Omega)}+\gamma \frac{C_{d,\alpha}}{2}\vertiii {e}_{\alpha, \Omega\cup\Omega_{\lambda}} \label{trancate: final}\\
& \leq  \frac{1}{4 \gamma^3}\Vert \nabla u \Vert^4_{L^2(\Omega)}\Vert e \Vert^2_{L^2(\Omega)} +C\frac{\gamma^2}{\nu} \left(\frac{C_{d,\alpha}}{\alpha\lambda^{2\alpha}} \right)^2\Vert u\Vert_{L^2(\Omega)}^2+C \frac{\gamma^2}{\nu} C_{d,\alpha}^2\left(\frac{w_d(\lambda+I)^d}{\lambda^{d+2\alpha}} \right)^2
\Vert u \Vert_{L^2(\Omega)}^2\nonumber\\
 &+C\frac{\gamma^2}{\nu} 
\left(\frac{C_{d,\alpha}}{\lambda^{d+2\alpha}}\right)^2 Vol(\Omega)\Vert u\Vert_{L^1(\Omega)}^2+ C\frac{\gamma^2}{\nu} C_{d,\alpha}^2\left(\frac{Vol(\Omega)}{\lambda^{d+2\alpha}}\right)^2
\Vert u\Vert_{L^2(\Omega)}^2\nonumber
\end{align}
Let $\phi(t)=\frac{1}{2\nu^3}\int_{0}^t\Vert \nabla u(x,s)\Vert^4_{L^2(\Omega)}ds$. Multiplying \eqref{trancate: final} with $exp(-\phi(t))$ gives
\begin{align}
&\qquad\frac{d}{dt} \left( \exp(-\phi(t))\Vert e\Vert^2_{L^2(\Omega)} \right)+ exp(-\phi(t))\frac{1}{16}\nu \Vert\nabla e \Vert^2_{L^2(\Omega)}+exp(-\phi(t))\gamma \frac{C_{d,\alpha}}{2}\vertiii {e}_{\alpha, \Omega\cup\Omega_{\lambda}}
\label{trancate:1}\\
&\qquad \leq exp(-\phi(t))\cdot \Bigg (C\frac{\gamma^2}{\nu} \left(\frac{C_{d,\alpha}}{\alpha\lambda^{2\alpha}} \right)^2\Vert u\Vert_{L^2(\Omega)}^2
+C \frac{\gamma^2}{\nu} C_{d,\alpha}^2\left(\frac{w_d(\lambda+I)^d}{\lambda^{d+2\alpha}} \right)^2
\Vert u \Vert_{L^2(\Omega)}^2\nonumber\\
 &\qquad+C\frac{\gamma^2}{\nu} 
\left(\frac{C_{d,\alpha}}{\lambda^{d+2\alpha}}\right)^2 Vol(\Omega)\Vert u\Vert_{L^1(\Omega)}^2
+ C\frac{\gamma^2}{\nu} C_{d,\alpha}^2\left(\frac{Vol(\Omega)}{\lambda^{d+2\alpha}}\right)^2
\Vert u\Vert_{L^2(\Omega)}^2\Bigg )\nonumber
\end{align}
Note that $e(x,0)=0$. Integrating above inequality over $[0,t]$ and then multiply both sides with $exp(\phi(t))$ gives
\begin{align}
& \Vert e(x,t)\Vert^2_{L^2(\Omega)}+exp(\phi(t))
\int_{0}^t exp(-\phi(s))\frac{1}{16}\nu \Vert\nabla e(x,s) \Vert^2_{L^2(\Omega)} ds\label{trancate:2}\\
&\qquad\qquad+exp(\phi(t))\int_{0}^t exp(-\phi(s))\gamma \frac{C_{d,\alpha}}{2}\vertiii {e(x,s)}_{\alpha, \Omega\cup\Omega_{\lambda}} ds \nonumber\\
&\qquad \leq exp(\phi(t))\int_{0}^t exp(-\phi(s))\cdot \Bigg (C\frac{\gamma^2}{\nu} \left(\frac{C_{d,\alpha}}{\alpha\lambda^{2\alpha}} \right)^2\Vert u(x,s)\Vert_{L^2(\Omega)}^2\nonumber\\
&\qquad+C \frac{\gamma^2}{\nu} C_{d,\alpha}^2\left(\frac{w_d(\lambda+I)^d}{\lambda^{d+2\alpha}} \right)^2
\Vert u(x,s) \Vert_{L^2(\Omega)}^2\nonumber\\ &\qquad+C\frac{\gamma^2}{\nu} 
\left(\frac{C_{d,\alpha}}{\lambda^{d+2\alpha}}\right)^2 Vol(\Omega)\Vert u(x,s)\Vert_{L^1(\Omega)}^2
+ C\frac{\gamma^2}{\nu} C_{d,\alpha}^2\left(\frac{Vol(\Omega)}{\lambda^{d+2\alpha}}\right)^2
\Vert u(x,s)\Vert_{L^2(\Omega)}^2\Bigg ) ds\nonumber\\
\end{align}
Note that $exp(\phi(T))\geq exp(\phi(s))\geq exp(\phi(0))$ for $\forall\, s\in [0,T]$. So we have 
\begin{align}
& \Vert e(x,t)\Vert^2_{L^2(\Omega)}+\int_{0}^t \frac{1}{16}\nu \Vert\nabla e(x,s) \Vert^2_{L^2(\Omega)} ds+\int_{0}^t \gamma \frac{C_{d,\alpha}}{2}\vertiii {e(x,s)}_{\alpha, \Omega\cup\Omega_{\lambda}} ds\label{trancate:3}\\
&\qquad \leq exp(\phi(T)-\phi(0))\Bigg \{\left(C\frac{\gamma^2 C_{d,\alpha}^2}{\nu \alpha^2}\int_{0}^T  \Vert u(x,s)\Vert_{L^2(\Omega)}^2 ds\right)\cdot \left(\frac{1}{\lambda^{2\alpha}}\right) \nonumber\\
&\qquad \quad +\left(C \frac{\gamma^2 C_{d,\alpha}^2w_d^2}{\nu} \int_{0}^T \Vert u(x,s) \Vert_{L^2(\Omega)}^2 ds\right) \left(\frac{(\lambda+I)^d}{\lambda^{d+2\alpha}} \right)^2
\nonumber\\
 &\qquad\quad+\left( C\frac{\gamma^2 C_{d,\alpha}^2 Vol(\Omega)}{\nu}  \int_{0}^T \Vert u(x,s)\Vert_{L^1(\Omega)}^2 ds \right)
\left(\frac{1}{\lambda^{d+2\alpha}}\right)^2 \nonumber\\
&\qquad\quad+\left( C\frac{\gamma^2 C_{d,\alpha}^2 Vol(\Omega)^2}{\nu} \int_{0}^T \Vert u(x,s)\Vert_{L^2(\Omega)}^2 ds\right)\cdot\left(\frac{1}{\lambda^{d+2\alpha}}\right)^2\Bigg \}
\nonumber
\end{align}
Because $u\in L^2(0,T; L^2(\Omega))$, after absorbing constants, we have
\begin{align}
& \Vert e(x,t)\Vert^2_{L^2(\Omega)}+\int_{0}^t \frac{1}{16}\nu \Vert\nabla e(x,s) \Vert^2_{L^2(\Omega)} ds+\int_{0}^t \gamma \frac{C_{d,\alpha}}{2}\vertiii {e(x,s)}_{\alpha, \Omega\cup\Omega_{\lambda}} ds\label{trancate:4}\\
&\qquad\leq  C\left[ \left(\frac{1}{\lambda^{2\alpha}}\right)^2 + \left(\frac{(\lambda+I)^d}{\lambda^{d+2\alpha}} \right)^2+
\left(\frac{1}{\lambda^{d+2\alpha}}\right)^2 \right]
\nonumber
\end{align}
In particular, if $\lambda\geq 1$, we have
\begin{align}
\left(\frac{\lambda+I}{\lambda}\right)^d=\left(1+\frac{I}{\lambda}\right)^d\leq (1+I)^d \quad\text{ and } \quad\frac{1}{\lambda^d}\leq 1,
\end{align}
and thus, after absorbing constants, we have
\begin{align}
&\Vert e(x,T)\Vert^2_{L^2(\Omega)}+\int_{0}^T \frac{1}{16}\nu \Vert\nabla e(x,s) \Vert^2_{L^2(\Omega)} ds\label{trancate:5}\\
&\qquad\qquad\qquad\qquad+\int_{0}^T \gamma \frac{C_{d,\alpha}}{2}\vertiii {e(x,s)}_{\alpha, \Omega\cup\Omega_{\lambda}} ds
\leq  C\left(\frac{1}{\lambda^{2\alpha}}\right) ^2.\nonumber
\end{align}

}


\subsection{Proof of Theorem \ref{fullydiscreteerror}}\label{thm5.1proof}

{\allowdisplaybreaks

To facilitate error analysis, we consider the weakly divergence free velocity space.
$$
V_h:=\{ v\in X_h: (\nabla\cdot v, q)=0, \forall q\in Q_h\}.
$$
Then, the solution of the truncated variational problem \eqref{prob} ${u_\lambda}$ satisfies
\begin{equation}\label{prob-erroranalysis}
\left\{\begin{aligned}
& \int_{\Omega }\frac{{u_\lambda^{n+1}}-{u_\lambda^{n}}}{\Delta t}v dx +{\widetilde b}_{\Omega} ({u_\lambda^{n+1}}, {u_\lambda^{n+1}}, v)+\nu \int_{\Omega} \nabla {u_\lambda^{n+1}} : \nabla v dx\\
& \qquad+\gamma \frac{C_{d,\alpha}}{2}\int_{\Omega \cup \Omega_\lambda }\int_{(\Omega\cup\Omega_\lambda)\cap\mathcal{B}_{\lambda}(x)} \frac{{u_\lambda^{n+1}}(x)-{u_\lambda^{n+1}}(y)}{|x-y|^{n+2\alpha}}(v(x)-v(y))dydx\\
& \qquad-\int_{\Omega} {p_\lambda^{n+1}} (\nabla\cdot v)dx= \int_{\Omega} f^{n+1} v dx +\int_{\Omega} R({u_\lambda^{n+1}}) v dx \qquad\forall v\in V_h,
\end{aligned}\right.
\end{equation}
where $R({u_\lambda^{n+1}})$ is defined as
\[
R({u_\lambda^{n+1}})=\frac{{u_\lambda^{n+1}}-{u_\lambda^{n}}}{\Delta t}-{u_\lambda}_{t}
(t^{n+1})\text{ .}
\]
Assuming that $X_h$ and $Q_h$ satisfy the discrete LBB condition, the method \eqref{full} is equivalent to: given $u_{h,\lambda}^n$, find $u_{h,\lambda}^{n+1}$ satisfying
\begin{equation}\label{full-Vh}
\left\{\begin{aligned}
&\int_{\Omega}\frac{u_{h,\lambda}^{n+1}-u_{h,\lambda}^n}{\Delta t} v dx +{\widetilde b}_{\Omega}(u_{h,\lambda}^n,u_{h,\lambda}^{n+1},v)+\nu\int_\Omega \nabla u_{h,\lambda}^{n+1}:\nabla v dx 
\\
&+\gamma\frac{C_{d,\alpha}}{2}\int_{\Omega\cup
\Omega_{\lambda}}\int_{\Omega\cup
\Omega_\lambda\cap
\mathcal{B}_\lambda(x)}\frac{u_{h,\lambda}^{n+1}(x)-u_{h,\lambda}^{n+1}(x^\prime)}{|x-x^\prime|^{d+2\alpha}}(v(x)-v(x^\prime))dx^\prime dx
\\
&\qquad\qquad\qquad\qquad\qquad\qquad\qquad\qquad\qquad=\int_{\Omega} f^{n+1} v dx\qquad\forall v\in V_h.
\end{aligned}\right.
\end{equation}
With $e^{n}_{h,\lambda}={u_\lambda}^n-u_{h,\lambda}^n $, subtracting \eqref{full-Vh} from  (\ref{prob-erroranalysis}) gives
\begin{equation}\label{eq:err}
\begin{aligned}
&\int_{\Omega} \frac{e^{n+1}_{h,\lambda}-e^{n}_{h,\lambda}}{ \Delta t} v dx
+{\widetilde b}_{\Omega}({u_\lambda^{n+1}},{u_\lambda^{n+1}},v)-{\widetilde b}_{\Omega}(u_{h,\lambda}^n,u_{h,\lambda}^{n+1},v)\\
&+\gamma\frac{C_{d,\alpha}}{2}\int_{\Omega\cup
\Omega_{\lambda}}\int_{\Omega\cup
\Omega_\lambda\cap
\mathcal{B}_\lambda(x)}\frac{e^{n+1}_{h,\lambda}(x)-e^{n+1}_{h,\lambda}(x^\prime)}{|x-x^\prime|^{d+2\alpha}}(v(x)-v(x^\prime))dx^\prime dx
\\
&+\nu\int_\Omega \nabla e^{n+1}_{h,\lambda}:\nabla v dx 
-\int_{\Omega} {p_\lambda^{n+1}} (\nabla\cdot v)dx= \int_{\Omega} R({u_\lambda^{n+1}}) v dx\text{ .}
\end{aligned}
\end{equation}
We split $e^{n}_{h,\lambda}$ into two terms
\[
e^{n}_{h,\lambda}={u_\lambda}^n-u_{h,\lambda}^n = ({u_\lambda}^n-I_h {u_\lambda}^n)+ (I_h{u_\lambda}^n-u_{h,\lambda}^n)=\eta^n + \xi_{h}^n,
\]
where $I_h{u_\lambda}^n \in V_h$ is the interpolant of ${u_\lambda}^n$ in $V_h$. Now setting $v=\xi_h^{n+1}\in V_h$ in \eqref{eq:err} and multiplying through by $\Delta t$ gives
\begin{equation}\label{eq:err1}
\begin{aligned}
&\frac{1}{2}\Vert \xi_h^{n+1}\Vert_{L^2(\Omega)}^{2}-\frac{1}{2}\Vert \xi_h^{n}\Vert_{L^2(\Omega)}^{2}+\frac{1}{2}\|\xi_h^{n+1}-\xi_h^{n}\|^{2}_{L^2(\Omega)}\\
&+\Delta t\nu
\Vert \nabla \xi_h^{n+1}\Vert^{2}_{L^2(\Omega)}
+\Delta t\gamma \frac{C_{d,\alpha}}{2} \vertiii{\xi_h^{n+1}}_{\alpha, \Omega\cup\Omega_{\lambda}}^2\\
&=-\Delta t {\widetilde b}_{\Omega}({u_\lambda^{n+1}},{u_\lambda^{n+1}},\xi_h^{n+1})
+\Delta t {\widetilde b}_{\Omega}(u_{h,\lambda}^n,u_{h,\lambda}^{n+1},\xi_h^{n+1})\\
&\quad+\Delta t \int_{\Omega}R({u_\lambda^{n+1}})\cdot \xi_h
^{n+1}dx+\Delta t\int_{\Omega} {p_\lambda^{n+1}} (\nabla\cdot \xi_h^{n+1})dx\\
&-\int_{\Omega} (\eta^{n+1}-\eta^{n})\cdot \xi_h^{n+1} dx-\Delta t\nu\int_\Omega \nabla \eta^{n+1}:\nabla \xi_h^{n+1} dx\\
&+\Delta t\gamma\frac{C_{d,\alpha}}{2}\int_{\Omega\cup
\Omega_{\lambda}}\int_{\Omega\cup
\Omega_\lambda\cap
\mathcal{B}_\lambda(x)}\frac{\eta^{n+1}(x)-\eta^{n+1}(x^\prime)}{|x-x^\prime|^{d+2\alpha}}(\xi_h^{n+1}(x)-\xi_h^{n+1}(x^\prime))dx^\prime dx
\text{ .}
\end{aligned}
\end{equation}
Next, we bound the nonlinear terms. Subtracting and adding  ${\widetilde b}_{\Omega}(u_{h,\lambda}^{ n},{u_\lambda^{n+1}},\xi_h^{n+1})$ and ${\widetilde b}_{\Omega}(u_{\lambda}^{ n},{u_\lambda^{n+1}},\xi_h^{n+1})$ and using skew symmetry, we have
\begin{equation}\label{eq:nonlinear222}
\begin{aligned}
{\widetilde b}_{\Omega}(u_{h,\lambda}^{n},u_{h,\lambda}^{n+1}&,\xi_h^{n+1})
-{\widetilde b}_{\Omega}({u_\lambda^{n+1}}, {u_\lambda^{n+1}},\xi_h^{n+1})\\
&= {\widetilde b}_{\Omega}(u_{h,\lambda}^{n},u_{h,\lambda}^{n+1},\xi_h^{n+1})-{\widetilde b}_{\Omega}(u_{h,\lambda}^{n}
,{u_\lambda^{n+1}},\xi_h^{n+1})\\
&\quad+{\widetilde b}_{\Omega}(u_{h,\lambda}^{n}
,u_{\lambda}^{n+1},\xi_h^{n+1})-{\widetilde b}_{\Omega}({u_\lambda}^{ n},u_{\lambda}^{n+1},\xi_h^{n+1})\\
&\quad+{\widetilde b}_{\Omega}(u_{\lambda}^{ n},u_{\lambda}^{n+1},\xi_h^{n+1})-{\widetilde b}_{\Omega}(u_{\lambda}^{ n+1},{u_\lambda^{n+1}},\xi_h^{n+1})\\
&=- {\widetilde b}_{\Omega}(u_{h,\lambda}^{n},e^{n+1}_{h,\lambda},\xi_h^{n+1})-{\widetilde b}_{\Omega}(e^{n}_{h,\lambda}
,{u_\lambda^{n+1}},\xi_h^{n+1})\\
&\quad-{\widetilde b}_{\Omega}({u_\lambda^{n+1}}-u_{\lambda}^n, {u_\lambda^{n+1}}, \xi_h^{n+1})\\
&=- {\widetilde b}_{\Omega}(u_{h,\lambda}^{n},\eta^{n+1},\xi_h^{n+1})-{\widetilde b}_{\Omega}(\eta^{n}
,{u_\lambda^{n+1}},\xi_h^{n+1})\\
&\quad-{\widetilde b}_{\Omega}(\xi_h^{n}
,{u_\lambda^{n+1}},\xi_h^{n+1})-{\widetilde b}_{\Omega}({u_\lambda^{n+1}}-u_{\lambda}^n, {u_\lambda^{n+1}}, \xi_h^{n+1}).
\end{aligned}
\end{equation}
With the assumption ${u_\lambda}\in L^{\infty}(0,T; H^1(\Omega))$, we estimate the nonlinear terms as follows:
\begin{align}
\Delta t \vert {\widetilde b}_{\Omega}(\eta^{n},{u_\lambda^{n+1}},\xi_{h}^{n+1})\vert &\leq C\Delta t\|\nabla \eta
^{n}\|_{L^2(\Omega)}\|\nabla {u_\lambda^{n+1}}\|_{L^2(\Omega)}\|\nabla\xi_{h}^{n+1}\|_{L^2(\Omega)}\\
&\leq\Delta t\frac{\nu}{64}
\|\nabla\xi_{h}^{n+1}\|_{L^2(\Omega)}^{2}+C\Delta t\nu^{-1}\|\nabla\eta^{n+1}\|_{L^2(\Omega)}^{2}\text{ ,}\nonumber
\end{align}
\begin{align}
\Delta t\vert {\widetilde b}_{\Omega}({u_\lambda^{n+1}}&-{u_\lambda^{n}},{u_\lambda^{n+1}},\xi_{h}^{n+1}) \vert\nonumber\\
&\leq C\Delta t\|\nabla
({u_\lambda^{n+1}}-{u_\lambda^{n}})\|_{L^2(\Omega)}\|\nabla {u_\lambda^{n+1}}\|_{L^2(\Omega)}\|\nabla\xi_{h}
^{n+1}\|_{L^2(\Omega)}\nonumber\\
&\leq\Delta t\frac{\nu}{64}
\|\nabla\xi_{h}^{n+1}\|_{L^2(\Omega)}^{2}+C\Delta t\nu^{-1}\|\nabla({u_\lambda}
^{n+1}-{u_\lambda^{n}})\|_{L^2(\Omega)}^{2}\nonumber\\
&\leq\Delta t\frac{\nu}{64}
\|\nabla\xi_{h}^{n+1}\|_{L^2(\Omega)}^{2}+\frac{C{\Delta t}^{3}}{\nu
}\|\frac{\nabla ({u_\lambda^{n+1}}-{u_\lambda^{n}})}{\Delta t}\|_{L^2(\Omega)}^{2}\nonumber\\
&= \Delta t\frac{\nu}{64}
\|\nabla\xi_{h}^{n+1}\|_{L^2(\Omega)}^{2}+\frac{C{\Delta t}^{3}}{\nu} (
\int_{\Omega}(\frac{1}{\Delta t}\int_{t^{n}}^{t^{n+1}} (\nabla {u_\lambda}_{t}
)dt)^{2} d\Omega)\\
&\leq\Delta t\frac{\nu}{64}
\|\nabla\xi_{h}^{n+1}\|_{L^2(\Omega)}^{2}+\frac{C{\Delta t}^{3}}{\nu}
(\int_{\Omega}(\frac{1}{\Delta t}\int_{t^{n}}^{t^{n+1}}|\nabla {u_\lambda}_{t}|^{2}
dt) d\Omega)\nonumber\\
&\leq\Delta t\frac{\nu}{64}
\|\nabla\xi_{h}^{n+1}\|_{L^2(\Omega)}^{2}+\frac{C\Delta t^2}{\nu}
(\int_{t^{n}}^{t^{n+1}}\| \nabla {u_\lambda}_{t} \|_{L^2(\Omega)}^{2} dt)\text{ , }\nonumber
\end{align}
\begin{equation}\label{eq:nonlinear2221}
\begin{aligned}
\Delta t\vert {\widetilde b}_{\Omega}(\xi_h^{n},& {u_\lambda^{n+1}}, \xi_h^{n+1}) \vert\\
&\leq C\Delta t\| \nabla \xi_h
^{n}\|^{\frac{1}{2}}_{L^2(\Omega)} \Vert \xi_h^{n}\Vert^{\frac{1}{2}}_{L^2(\Omega)}\|\nabla
{u_\lambda^{n+1}}\|_{L^2(\Omega)}\|\nabla \xi_h^{n+1}\|_{L^2(\Omega)}\\
&\leq C\|\nabla
{u_\lambda^{n+1}}\|_{L^2(\Omega)}\left(\epsilon\Delta t\|\nabla \xi_h^{n+1}\|^{2}_{L^2(\Omega)}+\frac{1}{\epsilon}\Delta t\|\nabla \xi_h
^{n} \|_{L^2(\Omega)}\|\xi_h^{n} \|_{L^2(\Omega)}\right)\\
&\leq C\left(\epsilon\Delta t\|\nabla \xi_h^{n+1}\|^{2}_{L^2(\Omega)}+\frac{1}{\epsilon}\left(\delta \Delta t
\|\nabla \xi_h^n \|^{2}_{L^2(\Omega)}+\frac{1}{\delta}\Delta t\|\xi_h^{n}\|_{L^2(\Omega)}^2\right)\right)\\
&\leq \left(\frac{\nu}{8} \Delta t\|\nabla \xi_h^{n+1}\|^{2}_{L^2(\Omega)}+\frac{\nu}{8} \Delta t\|\nabla
\xi_h^{n}\|^{2}_{L^2(\Omega)}\right)+\frac{C}{\nu^{3}}\Delta t\|\xi_h^{n} \|^{2}_{L^2(\Omega)}.
\end{aligned}
\end{equation}
Using Young's inequality and the result from the stability analysis, i.e., $\Vert u_{h,\lambda}^n \Vert^2 \leq C$, we have
\begin{align}
\Delta t\vert {\widetilde b}_{\Omega}(u^{n}_{h,\lambda}, &\eta^{n+1}, \xi_{h}^{n+1})\vert \nonumber\\
&\leq C\Delta t\|\nabla u^{n}_{h,\lambda}
\|_{L^2(\Omega)}^{1/2}\Vert u_{h,\lambda}^n\Vert_{L^2(\Omega)}^{1/2}\|\nabla \eta^{n+1}\|_{L^2(\Omega)}\|\nabla\xi_{h}^{n+1}\|_{L^2(\Omega)}\\
&\leq\Delta t\frac{\nu}{64}
\|\nabla\xi_{h}^{n+1}\|_{L^2(\Omega)}^{2}+C\Delta t\nu^{-1}\|\nabla u_{h,\lambda}^{n+1}\|_{L^2(\Omega)}\|\nabla\eta^{n}
\|_{L^2(\Omega)}^{2} \nonumber
\end{align}
Next, consider the pressure term. Since $\xi_{h}^{n+1}\in V_{h}$
we have
\begin{align}
\Delta t({p_\lambda^{n+1}},\nabla\cdot\xi_{h}^{n+1})&=\Delta t({p_\lambda^{n+1}}-q_{h}^{n+1},
\nabla\cdot\xi_{h}^{n+1})\nonumber\\
&\leq\Delta t\|{p_\lambda^{n+1}}-q_{h}^{n+1}\|_{L^2(\Omega)}\|\nabla\cdot\xi_{h}^{n+1}\|_{L^2(\Omega)}\\
&\leq\Delta t\frac{\nu}{64}\|\nabla\xi_{h}^{n+1}\|_{L^2(\Omega)}^{2}+C\Delta t \nu^{-1}\|{p_\lambda}
^{n+1}-q_{h}^{n+1}\|_{L^2(\Omega)}^{2} \text{ .}\nonumber
\end{align}
Finally,
\begin{equation}\label{lastineq}
\begin{aligned}
\Delta t
\int_{\Omega} R({u_\lambda^{n+1}})&\cdot \xi_h^{n+1} dx\\
&=\Delta t\int_{\Omega}\left(\frac{{u_\lambda^{n+1}}-{u_\lambda^{n}}}{\Delta
t}-{u_\lambda}_{t}(t^{n+1})\right) \cdot \xi_h^{n+1}dx\\
&\leq C\Delta t\|\frac{{u_\lambda^{n+1}}-{u_\lambda^{n}}}{\Delta t}-{u_\lambda}_{t}(t^{n+1})\| _{L^2(\Omega)}\|\nabla
\xi_h^{n+1}\|_{L^2(\Omega)}\\
&\leq\frac{\nu}{16}\Delta t\|\nabla \xi_h^{n+1}\|^{2}_{L^2(\Omega)}+\frac{C}{\nu}\Delta t\|\frac
{{u_\lambda^{n+1}}-{u_\lambda^{n}}}{\Delta t}-{u_\lambda}_{t}(t^{n+1})\|^{2}_{L^2(\Omega)}\\
&\leq\frac{\nu}{16}\Delta t\|\nabla \xi_h^{n+1}\|^{2}_{L^2(\Omega)}+\frac{C\Delta t^2}{\nu}
\int_{t^{n}}^{t^{n+1}}\|{u_\lambda}_{tt}\|^{2}_{L^2(\Omega)} dt \text{ .}
\end{aligned}
\end{equation}
\begin{align}
-\int_{\Omega}(\eta^{n+1}-\eta^{n})\cdot \xi_{h}^{n+1}dx &\leq
C\Delta t\|\frac{\eta^{n+1}-\eta^{n}}{ \Delta t}\|_{L^2(\Omega)} \|\nabla\xi_{h}
^{n+1}\|_{L^2(\Omega)}\nonumber\\
&\leq C \Delta t\bar{\nu}^{-1}\|\frac{\eta^{n+1}-\eta^{n}}{ \Delta t}\|_{L^2(\Omega)}^{2}+\Delta t\frac
{\nu}{64}\|\nabla\xi_{h}^{n+1}\|_{L^2(\Omega)}^{2}\\
&\leq C\Delta t \nu^{-1}\|\frac{1}{\Delta t}\int_{t^{n}}^{t^{n+1}} \eta_{t} \text{ }
dt \|_{L^2(\Omega)}^2+\Delta t\frac{\nu}{64}\|\nabla\xi_{h}^{n+1}\|_{\Omega}^{2}\nonumber\\
&\leq\frac{C}{\nu}\int_{t^{n}}^{t^{n+1}}\| \eta_{t}\|_{L^2(\Omega)}^{2}\text{ }
dt+\Delta t\frac{\nu}{64}
\|\nabla\xi_{h}^{n+1}\|_{L^2(\Omega)}^{2}\text{ ,}\nonumber
\end{align}
\begin{align}
-\Delta t\nu\int_{\Omega}\nabla\eta^{n+1} : \nabla\xi_{h}^{n+1} dx&\leq\Delta t\nu\|\nabla\eta
^{n+1}\|_{L^2(\Omega)} 
\|\nabla\xi_{h}^{n+1}\|_{L^2(\Omega)}\\
&\leq C\Delta t\nu\|\nabla\eta^{n+1}\|_{L^2(\Omega)}^{2}+\Delta t \frac{\nu}{64}\|\nabla\xi_{h}
^{n+1}\|_{L^2(\Omega)}^{2} \text{ ,}\nonumber
\end{align}
\begin{align}
-\Delta t\gamma\frac{C_{d,\alpha}}{2}\int_{\Omega\cup
\Omega_{\lambda}}\int_{\Omega\cup
\Omega_\lambda\cap
\mathcal{B}_\lambda(x)}&\frac{\eta^{n+1}(x)-\eta^{n+1}(x^\prime)}{|x-x^\prime|^{d+2\alpha}}(\xi_h^{n+1}(x)-\xi_h^{n+1}(x^\prime))dx^\prime dx\\
&\leq \Delta t\gamma\frac{C_{d,\alpha}}{2} \vertiii{\eta^{n+1}}_{\alpha, \Omega\cup\Omega_{\lambda}}\vertiii{\xi_{h}^{n+1}}_{\alpha, \Omega\cup\Omega_{\lambda}}\nonumber\\
&\leq \Delta t\gamma\frac{C_{d,\alpha}}{4} \vertiii{\eta^{n+1}}_{\alpha, \Omega\cup\Omega_{\lambda}}^2+\Delta t\gamma\frac{C_{d,\alpha}}{4}\vertiii{\xi_{h}^{n+1}}_{\alpha, \Omega\cup\Omega_{\lambda}}^2\nonumber\\
&\leq C \Delta t\gamma\frac{C_{d,\alpha}}{4} \Vert \eta^{n+1}\Vert_{H^{\alpha}(\Omega\cup\Omega_{\lambda})}^2
+\Delta t\gamma\frac{C_{d,\alpha}}{4}\vertiii{\xi_{h}^{n+1}}_{\alpha, \Omega\cup\Omega_{\lambda}}^2\nonumber\\
&\leq C \Delta t\gamma\frac{C_{d,\alpha}}{4} \Vert \eta^{n+1}\Vert_{H^{1}(\Omega\cup\Omega_{\lambda})}^2
+\Delta t\gamma\frac{C_{d,\alpha}}{4}\vertiii{\xi_{h}^{n+1}}_{\alpha, \Omega\cup\Omega_{\lambda}}^2\nonumber\\
&\leq C \Delta t\gamma\frac{C_{d,\alpha}}{4} \Vert \eta^{n+1}\Vert_{H^{1}(\Omega\cup\Omega_{\lambda})}^2
+\Delta t\gamma\frac{C_{d,\alpha}}{4}\vertiii{\xi_{h}^{n+1}}_{\alpha, \Omega\cup\Omega_{\lambda}}^2\nonumber\\
&\leq C \Delta t\gamma\frac{C_{d,\alpha}}{4} \Vert \eta^{n+1}\Vert_{H^{1}(\Omega)}^2
+\Delta t\gamma\frac{C_{d,\alpha}}{4}\vertiii{\xi_{h}^{n+1}}_{\alpha, \Omega\cup\Omega_{\lambda}}^2\nonumber
\end{align}
\noindent Combining, we now have the inequality
\begin{equation}
\begin{aligned}
&\frac{1}{2}\Vert \xi_h^{n+1}\Vert_{L^2(\Omega)}^{2}-\frac{1}{2}\Vert \xi_h^{n}\Vert_{L^2(\Omega)}^{2}+\frac{1}{2}\|\xi_h^{n+1}-\xi_h^{n}\|^{2}_{L^2(\Omega)}+\frac{1}{2}\Delta t\nu
\Vert \nabla \xi_h^{n+1}\Vert^{2}_{L^2(\Omega)}\\
&
+\frac{\nu}{8} \Delta t\left(\|\nabla
\xi_h^{n+1}\|^{2}_{L^2(\Omega)}-\|\nabla
\xi_h^{n}\|^{2}_{L^2(\Omega)}\right)+\Delta t\gamma \frac{C_{d,\alpha}}{4} \vertiii{\xi_h^{n+1}}_{\alpha, \Omega\cup\Omega_{\lambda}}^2\\
&\leq \frac{C}{\nu^{3}}\Delta t\|\xi_h^{n} \|^{2}_{L^2(\Omega)}
+C\Delta t\nu^{-1}
\|\nabla\eta^{n+1}\|_{L^2(\Omega)}^{2}
+\frac{C\Delta t^2}{\nu}
(\int_{t^{n}}^{t^{n+1}}\| \nabla {u_\lambda}_{t} \|_{L^2(\Omega)}^{2} dt)\\
&+C\Delta t\nu^{-1}\|\nabla u_{h,\lambda}^{n+1}\|_{L^2(\Omega)}\|\nabla\eta^{n}
\|_{L^2(\Omega)}^{2} 
+C\Delta t \nu^{-1}\|{p_\lambda}
^{n+1}-q_{h}^{n+1}\|_{L^2(\Omega)}^{2}\\
&+\frac{C\Delta t^2}{\nu}
\int_{t^{n}}^{t^{n+1}}\|{u_\lambda}_{tt}\|^{2}_{L^2(\Omega)} dt+C\nu^{-1}\int_{t^{n}}^{t^{n+1}}\| \eta_{t}\|_{L^2(\Omega)}^{2}\text{ }
dt\\
&+C\Delta t\nu\|\nabla\eta^{n+1}\|_{L^2(\Omega)}^{2}+C \Delta t\gamma\frac{C_{d,\alpha}}{4} \Vert \eta^{n+1}
\Vert_{H^1(\Omega)}^2
\text{ .}
\end{aligned}
\end{equation}
Taking the sum from $n=0$ to
$n=N-1$ gives
\begin{equation}
\begin{aligned}
&\frac{1}{2}\Vert \xi_h^{N}\Vert_{L^2(\Omega)}^{2}+\frac{\nu}{8} \Delta t\|\nabla
\xi_h^{N}\|^{2}_{L^2(\Omega)}+\frac{1}{2}\Delta t \sum_{n=0}^{N-1}\|\xi_h^{n+1}-\xi_h^{n}\|^{2}_{L^2(\Omega)}\\
&\qquad+\frac{1}{2}\Delta t\sum_{n=0}^{N-1}\nu
\Vert \nabla \xi_h^{n+1}\Vert^{2}_{L^2(\Omega)}
+\Delta t\gamma \frac{C_{d,\alpha}}{4} \vertiii{\xi_h^{n+1}}_{\alpha, \Omega\cup\Omega_{\lambda}}^2\\
&\leq \frac{C}{\nu^{3}}\Delta t\|\xi_h^{n} \|^{2}_{L^2(\Omega)}+\frac{1}{2}\Vert \xi_h^{0}\Vert_{L^2(\Omega)}^{2}+\frac{\nu}{8} \Delta t\|\nabla
\xi_h^{0}\|^{2}_{L^2(\Omega)}
+\sum_{n=0}^{N-1}\Big \{C\Delta t\nu^{-1}
\|\nabla\eta^{n+1}\|_{L^2(\Omega)}^{2}\\
&\quad+\frac{C\Delta t^2}{\nu}
(\int_{t^{n}}^{t^{n+1}}\| \nabla {u_\lambda}_{t} \|_{L^2(\Omega)}^{2} dt)+C\Delta t\nu^{-1}\|\nabla u_{h,\lambda}^{n+1}\|_{L^2(\Omega)}\|\nabla\eta^{n}
\|_{L^2(\Omega)}^{2} \\
&\quad+C\Delta t \nu^{-1}\|{p_\lambda}
^{n+1}-q_{h}^{n+1}\|_{L^2(\Omega)}^{2}
+\frac{C\Delta t^2}{\nu}
\int_{t^{n}}^{t^{n+1}}\|{u_\lambda}_{tt}\|^{2}_{L^2(\Omega)} dt\\
&\quad+C\nu^{-1}\int_{t^{n}}^{t^{n+1}}\| \eta_{t}\|_{L^2(\Omega)}^{2}\text{ }
dt
+C\Delta t\nu\|\nabla\eta^{n+1}\|_{L^2(\Omega)}^{2}+C \Delta t\gamma\frac{C_{d,\alpha}}{4} \Vert \eta^{n+1}
\Vert_{H^1(\Omega)}^2\Big \}
\text{ .}
\end{aligned}
\end{equation}
Because ${u_\lambda}\in H^{2}(0,T;L^{2}(\Omega))$, we have
\begin{align}
C\nu^{-1}\Delta t&\sum_{n=0}^{N-1}
\Vert\nabla u_{h.\lambda}^{n+1}\Vert_{L^2(\Omega)}
\Vert\nabla\eta^{n}\Vert_{L^2(\Omega)}^{2}
\\
&\leq C\nu^{-1}h^{2k}\Delta t \sum_{n=0}^{N-1}\Vert {u_\lambda}^n\Vert^2_{H^{k+1}(\Omega)}\Vert^{2}\Vert\nabla u_{h,\lambda}^{n+1}\Vert_{L^2(\Omega)}\nonumber\\
&\leq C\nu^{-1}h^{2k}\left( \Delta t \sum_{n=0}^{N-1}\Vert {u_\lambda}^n\Vert_{H^{k+1}(\Omega)}^4+\Delta t \sum_{n=0}^{N-1}\Vert\nabla u_{h,\lambda}^{n+1}\Vert^2 \right)\nonumber\\
&\leq C\nu^{-1}h^{2k}\Vert\vert {u_\lambda}\vert\Vert^4_{4, k+1, \Omega}+C\nu^{-1}h^{2k}\nonumber
\end{align}
Using the interpolation inequality and the stability result, i.e., $\Delta t \sum_{n=0}^{N-1}\Vert u_{h,\lambda}^n\Vert_{L^2(\Omega)}^2\leq C$, we have
\begin{equation}
\begin{aligned}
&\frac{1}{2}\Vert \xi_h^{N}\Vert_{L^2(\Omega)}^{2}+\frac{\nu}{8} \Delta t\|\nabla
\xi_h^{N}\|^{2}_{L^2(\Omega)}+\frac{1}{2}\Delta t \sum_{n=0}^{N-1}\|\xi_h^{n+1}-\xi_h^{n}\|^{2}_{L^2(\Omega)}\\
&\qquad+\frac{1}{2}\Delta t\sum_{n=0}^{N-1}\nu
\Vert \nabla \xi_h^{n+1}\Vert^{2}_{L^2(\Omega)}
+\Delta t\gamma \frac{C_{d,\alpha}}{4} \vertiii{\xi_h^{n+1}}_{\alpha, \Omega\cup\Omega_{\lambda}}^2\\
&\leq \frac{C}{\nu^{3}}\Delta t\|\xi_h^{n} \|^{2}_{L^2(\Omega)}+\frac{1}{2}\Vert \xi_h^{0}\Vert_{L^2(\Omega)}^{2}+\frac{\nu}{8} \Delta t\|\nabla
\xi_h^{0}\|^{2}_{L^2(\Omega)}
+C\nu^{-1}h^{2k}
\|\vert {u_\lambda}\vert\|_{2,k+1,\Omega}^{2}\\
&\quad+\frac{C\Delta t^2}{\nu}
\| \nabla {u_\lambda}_{t} \|_{L^2(0,T;L^2(\Omega))}^{2} +C\nu^{-1}h^{2k}\Vert\vert {u_\lambda}\vert\Vert^4_{4, k+1, \Omega}+C\nu^{-1}h^{2k}\\
&\quad+C\nu^{-1}h^{2s+2}\|\vert{p_\lambda}
\vert\|_{2,s+1,\Omega}^{2}
+\frac{C\Delta t^2}{\nu}
\|{u_\lambda}_{tt}\|^{2}_{L^2(0,T;L^2(\Omega))} \\
&\quad+C\nu^{-1}h^{2k+2}\| {u_\lambda}_{t}\|_{L^2(0,T;H^{k+1}(\Omega))}^{2}
+C\nu h^{2k}
\|\vert {u_\lambda}\vert\|_{2,k+1,\Omega}^{2}+C\gamma h^{2k}\Vert \vert {u_\lambda}\vert\Vert_{2,k+1,\Omega }^2
\text{ .}
\end{aligned}
\end{equation}
The next step results from the application of the discrete Gronwall
inequality \cite[p. 176]{GR79}:
\begin{equation}
\begin{aligned}
&\frac{1}{2}\Vert \xi_h^{N}\Vert_{L^2(\Omega)}^{2}+\frac{\nu}{8} \Delta t\|\nabla
\xi_h^{N}\|^{2}_{L^2(\Omega)}+\frac{1}{2}\Delta t \sum_{n=0}^{N-1}\|\xi_h^{n+1}-\xi_h^{n}\|^{2}_{L^2(\Omega)}\\
&\qquad+\frac{1}{2}\Delta t\sum_{n=0}^{N-1}\nu
\Vert \nabla \xi_h^{n+1}\Vert^{2}_{L^2(\Omega)}
+\Delta t\gamma \frac{C_{d,\alpha}}{4} \vertiii{\xi_h^{n+1}}_{\alpha, \Omega\cup\Omega_{\lambda}}^2\\
&\leq exp(\frac{CN\Delta t}{\nu^3})\Bigg\{\frac{\nu}{8} \Delta t\|\nabla
\xi_h^{0}\|^{2}_{L^2(\Omega)}+\frac{1}{2}\Vert \xi_h^{0}\Vert_{L^2(\Omega)}^{2}
+C\nu^{-1}h^{2k}
\|\vert {u_\lambda}\vert\|_{2,k+1,\Omega}^{2}\\
&\quad+\frac{C\Delta t^2}{\nu}
\| \nabla {u_\lambda}_{t} \|_{L^2(0,T;L^2(\Omega))}^{2} +C\nu^{-1}h^{2k}\Vert\vert {u_\lambda}\vert\Vert^4_{4, k+1, \Omega}+C\nu^{-1}h^{2k}\\
&\quad+C\nu^{-1}h^{2s+2}\|\vert{p_\lambda}
\vert\|_{2,s+1,\Omega}^{2}
+\frac{C\Delta t^2}{\nu}
\|{u_\lambda}_{tt}\|^{2}_{L^2(0,T;L^2(\Omega))} \\
&\quad+C\nu^{-1}h^{2k+2}\| {u_\lambda}_{t}\|_{L^2(0,T;H^{k+1}(\Omega))}^{2}
+C\nu h^{2k}
\|\vert {u_\lambda}\vert\|_{2,k+1,\Omega}^{2}+C\gamma h^{2k}\Vert \vert {u_\lambda}\vert\Vert_{2,k+1,\Omega }^2\Bigg \}
\text{ .}
\end{aligned}
\end{equation}
Recall that $e^{n}_{h,\lambda}=\eta^{n}+\xi_{h}^{n}$. Using the triangle
inequality on the error equation to split the error terms into the terms 
$\eta^{n}$ and $\xi_{h}^{n}$ gives
\begin{align}
&\frac{1}{2}\Vert e_{h,\lambda}^{N}\Vert_{L^2(\Omega)}^{2}+\Delta t\sum_{n=0}^{N-1}\frac{\nu}{2}\Vert\nabla e^{n+1}_{h,\lambda}\Vert_{L^2(\Omega)}
^{2}\\
&\qquad\leq\frac{1}{2}\Vert\xi_{h}^{N}\Vert_{L^2(\Omega)}^{2}
+\Delta t\sum_{n=0}^{N-1}\frac{\nu}{2}\Vert\nabla \xi_{h}^{n+1}\Vert_{L^2(\Omega)}
^{2}+\frac{1}{2}\Vert\eta^{N}\Vert_{L^2(\Omega)}^{2}
+\Delta t\sum_{n=0}^{N-1}\frac{\nu}{2}\Vert\nabla \eta^{n+1}\Vert_{L^2(\Omega)}
^{2}\text{ ,}\nonumber
\end{align}
and 
\begin{align}
\frac{1}{2}\Vert\xi_{h}^{0}&\Vert_{L^2(\Omega)}^{2}
+\frac{\nu\Delta t}{8}\Vert\nabla \xi_{h}
^{0}\Vert_{L^2(\Omega)}^{2}\nonumber\\
&\leq \frac{1}{2}\Vert e^{0}_{h,\lambda}\Vert_{L^2(\Omega)}^{2}+\frac{\nu\Delta t}{8}\Vert\nabla e
^{0}_{h,\lambda}\Vert_{L^2(\Omega)}^{2}+\frac{1}{2}\Vert\eta^{0}\Vert_{L^2(\Omega)}^{2}
+\frac{\nu\Delta t}{8}\Vert\nabla \eta
^{0}\Vert_{L^2(\Omega)}^{2}.
\end{align}
Using the previous bounds
for the $\eta^{n}$ terms and absorbing constants into a new constant $C$, we
obtain \eqref{full-err}.
}

\clearpage

\end{document}

\begin{table}[h!]
\caption{Numerical results with $\Delta t=h$ }\label{tab:converge_h}
\begin{center}\footnotesize
\renewcommand{\arraystretch}{1.3}
\begin{tabular}{|c|c|c||c|c||c|c|c|c||c|c||c|c||c|c||c|c||c|c|}\hline 
& \multicolumn{6}{c|}{errors for variable $u_1$}\\\hline
$h$&$L^\infty$ error &rate&$L^2$ error&CR&$H^1$ error&rate\\\hline
1/4&1.6579e-02 &&           	7.1046e-03 &&           	4.6749e-02&   \\\hline
1/8&5.1860e-03 &      1.6767 &	2.2907e-03 &      1.6329 &	1.9687e-02 &     1.2477	\\\hline
1/16&1.4107e-03&      1.8782 & 	7.1371e-04&      1.6824&	7.7386e-03&      1.3471	\\\hline
1/32&5.5281e-04&      1.3516&	2.4519e-04&      1.5415&	3.0393e-03&      1.3484	\\\hline
& \multicolumn{6}{c|}{errors for variable $u_2$}\\\hline
$h$&$L^\infty$ error &rate&$L^2$ error&CR&$H^1$ error&rate\\\hline
1/4&2.1887e-02&&            	1.0398e-02&&            	6.3696e-02 &\\\hline
1/8&7.2414e-03&      1.5957&	3.5802e-03&      1.5382&	2.7841e-02&      1.1940\\\hline
1/16&2.0032e-03&      1.8539&	1.1045e-03&      1.6966&	1.0931e-02&      1.3488	\\\hline
1/32&7.2338e-04&      1.4695&	3.4263e-04&      1.6887&	4.1859e-03&      1.3848	\\\hline
& \multicolumn{6}{c|}{errors for variable $p$}\\\hline
$h$&$L^\infty$ error &rate&$L^2$ error&CR&$H^1$ error&rate\\\hline
1/4&7.9094e-02&&            	3.0478e-02&&            	2.6438e-01      \\\hline
1/8&	6.6634e-02&      0.2473&	2.3927e-02&      0.3491&	2.6682e-01&     -0.0132\\\hline
1/16&4.3610e-02&      0.6116&	1.4726e-02&      0.7002&	2.2287e-01&      0.2597\\\hline
1//32&2.4691e-02&      0.8207&	8.2661e-03&      0.8331&	1.7450e-01&      0.3530\\\hline
 \end{tabular}
\end{center}
\end{table}

\begin{table}[h!]
\caption{Numerical results with $\Delta t=h^2$ }\label{tab:converge_h2}
\begin{center}\footnotesize
\renewcommand{\arraystretch}{1.3}
\begin{tabular}{|c||c|c||c|c||c|c|c|c||c|c||c|c||c|c||c|c||c|c|}\hline 
& \multicolumn{6}{c|}{errors for variable $u_1$}\\\hline2^
$h$&$L^\infty$ error &rate&$L^2$ error&CR&$H^1$ error&rate\\\hline
1/4&4.5933e-03&&            	1.8486e-03&&            	1.5748e-02&           	  	\\\hline
1/8&6.9876e-04&      2.7166&	2.8486e-04&      2.6981&	3.5475e-03&      2.1503\\\hline	
1/16&9.7215e-05 &     2.8456&	4.2196e-05&      2.7551&	8.0164e-04&      2.1458	\\\hline
& \multicolumn{6}{c|}{errors for variable $u_2$}\\\hline
$h$&$L^\infty$ error &rate&$L^2$ error&CR&$H^1$ error&rate\\\hline
1/4&5.7354e-03&&            	2.5917e-03&&            	1.8923e-02&         \\\hline
1/8&8.9519e-04&     2.6796&	4.1187e-04&      2.6537&	4.1429e-03&      2.1914	\\\hline
1/16&1.2319e-04&      2.8614&	6.0907e-05&      2.7575&	8.9762e-04&      2.2065	\\\hline
& \multicolumn{6}{c|}{errors for variable $p$}\\\hline
$h$&$L^\infty$ error &rate&$L^2$ error&CR&$H^1$ error&rate\\\hline
1/4&2.2096e-02&&            	9.2527e-03&&            	8.2484e-02&       \\\hline
1/8&8.8834e-03&      1.3146&	3.1754e-03&      1.5429&	4.1099e-02&      1.0050\\\hline
1/16&2.8736e-03&      1.6283&	9.4044e-04&      1.7555&	1.8666e-02&      1.1387\\\hline
 \end{tabular}
\end{center}
\end{table}